\documentclass[12pt]{amsart}
\usepackage{amsmath, amssymb, amsfonts, mathtools, amscd, amsthm, color, latexsym, graphicx, tensor, array, newclude, stmaryrd, mathrsfs}
\usepackage{pdfpages}
\usepackage{graphicx}
\usepackage{dirtytalk}
\usepackage{tikz-cd}

\usepackage{geometry}

\newtheorem{thm}{Theorem}[section]
\newtheorem{prop}[thm]{Proposition}
\newtheorem{lem}[thm]{Lemma}
\newtheorem{rem}[thm]{Remark}

\newtheorem{ex}[thm]{Example}
\newtheorem{defn}[thm]{Definition}

\newcommand{\bb}[1]{\mathbb{#1}}
\newcommand{\fort}[1]{\mathscr{#1}}
\newcommand{\cc}[1]{\mathcal{#1}}

\newcommand{\comp}{\poly_{\cc{F}}(D)}
\newcommand{\rcomp}{\poly_{\cc{F},r}(D)}
\newcommand{\format}{O_{\cc{F}}(1)}
\newcommand{\rformat}{O_{\cc{F},r}(1)}

\newcommand{\C}{\mathcal{C}}
\newcommand{\F}{\mathscr{F}}
\newcommand{\G}{\mathscr{G}}

\DeclareMathOperator{\poly}{poly}
\DeclareMathOperator{\polyl}{poly_{\ell}}

\DeclareMathOperator{\length}{length}
\usepackage{hyperref}
\hypersetup{
    colorlinks=true,
    linkcolor=blue,
    filecolor=blue,      
    urlcolor=blue,
    pdfpagemode=FullScreen
    }
\def\so{\raisebox{.5ex}{\scalebox{0.6}{\#}}\kern-.02em{}o}
\def\s{\raisebox{.8ex}{\scalebox{0.6}{\#}}}

\title{Fortifying the Yomdin-Gromov Algebraic Lemma}
\author{Dmitry Novikov, Benny Zack}
\date{\today}
\begin{document}
\maketitle
\begin{abstract}
We provide sharp cylindrical parametrizations of cylindrical cell decompositions by maps with bounded $C^{r}$ norm in the \so-minimal setting, thus generalizing and strengthening the Yomdin-Gromov Algebraic Lemma. 

We introduce forts, geometrical objects encoding the combinatorial structure of cylindrical cell decompositions in o-minimal geometry. Cylindical decompositions, refinements of such decompositions, and cylindrical parametrizations of such decomposition become morphisms in the category of forts. We formulate and prove the above results in the language of forts. 
\end{abstract}
\tableofcontents
\section{Introduction}
\subsection{The Yomdin Gromov Algebraic Lemma}
Denote $I=(0,1)$. For $m\geq n$ we denote by $\pi^{m}_{n}:I^{m}\to I^{n}$ the projection on the first $n$ coordinates. Often we will omit $m$ from the notation.

Additionally, the symbol $O_{a}(1)$ denotes a specific universally fixed function $a\mapsto C_{a}$ where $C_{a}>0$, and the symbol $\poly_{a}(b)$ denotes a polynomial $p_{a}$ with positive coefficients evaluated at $b$, where $a\mapsto p_{a}$ is a universally fixed map.  
\begin{defn}
Let $U\subset\bb{R}^{\ell}$ be a domain. For a $C^{r}$ function $f:U\to I$, we denote $||f||$ to be its supremum norm, and define
\begin{equation*}
    ||f||_{r}:=\underset{|\alpha|\leq r}{\max}\frac{||f^{(\alpha)}||}{\alpha!}.
\end{equation*}
For a $C^{r}$ map $f:U\to\bb{R}^{m}$, we define $||f||_{r}:=\underset{1\leq i\leq m}{\max}||f_i||_{r}$ where $f_{i}$ are the coordinate functions of $f$. If a $C^{r}$ function (resp. map) satisfies $||f||_{r}\leq 1$ we shall call it an $r$-function (resp. $r$-map).
\end{defn}
The Yomdin-Gromov algebraic lemma and its generalizations to the o-minimal setting have remarkable applications in the fields of dynamics and diophantine geometry. We now state the original semialgebraic version. Fix $r\in\bb{N}$.
\begin{thm}[\cite{Gromov}, Section 3.3]
\label{thm:ayg}
Let $X\subset I^{\ell}$ be a $\mu$-dimensional semialgebraic set, and let $\beta$ the sum of the degrees of the equations and the inequalities that define $X$. Then there exists a constant $C=C(\beta,r,\ell)$ and $r$-maps  $f_1,\dots,f_{C}:I^{\mu}\to X$, such that $X=\cup_{i}f_i(I^{\mu})$.
\end{thm}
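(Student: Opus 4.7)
The plan is to proceed by induction on the ambient dimension $\ell$, combining a cylindrical algebraic decomposition (CAD) of $X$ with a one-variable reparametrization lemma that tames the $C^{r}$ norm of semialgebraic functions. The base case $\ell = 1$ is immediate: a semialgebraic subset of $I$ is a finite union (of controlled size) of points and open intervals, each parametrized by an affine $r$-map from $I^{0}$ or $I^{1}$.

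For the inductive step, apply a CAD compatible with the defining polynomials of $X$. This partitions $I^{\ell}$ into $\poly_{\beta}(1)$ semialgebraic cells, each of which is either a graph $\{(x,\xi(x)) : x \in B\}$ of a semialgebraic function $\xi : B \to I$ over a base cell $B \subset I^{\ell-1}$, or a band $\{(x,y) : x \in B,\ \xi_{1}(x) < y < \xi_{2}(x)\}$ between two such graphs. The fiber functions $\xi$ have algebraic complexity bounded by $\poly_{\beta}(1)$. Since $X$ is a finite union of such cells of dimension at most $\mu$, it suffices to produce $r$-map parametrizations $I^{\mu} \to C$ for each cell $C \subset X$, padding lower-dimensional cells with a coordinate projection.

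For a cell $C$ lying over a base cell $B \subset I^{\ell-1}$, invoke the inductive hypothesis to cover $B$ by $O_{\beta,r,\ell}(1)$ $r$-maps $\phi_{j}:I^{\dim B} \to B$. The pullback $\xi \circ \phi_{j}$ is semialgebraic of controlled complexity but need not have bounded $C^{r}$ norm. Here one applies the core reparametrization step: there exist $O_{\beta,r,\ell}(1)$ semialgebraic $r$-maps $\psi : I^{\dim B} \to I^{\dim B}$ whose images cover $I^{\dim B}$ and such that $\xi \circ \phi_{j} \circ \psi$ is an $r$-function. For a graph cell, the composition $y \mapsto (\phi_{j}(\psi(y)),\,\xi \circ \phi_{j}(\psi(y)))$ is then the desired $r$-map. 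For a full-dimensional band cell, one appends a last coordinate $t \in I$ mapped affinely onto the fiber $(\xi_{1},\xi_{2})$; this is an $r$-map provided $\xi_{1} \circ \phi_{j} \circ \psi$ and $\xi_{2} \circ \phi_{j} \circ \psi$ are \emph{simultaneously} $r$-functions, which one arranges by taking a common refinement of the two reparametrizations.

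The main obstacle is the reparametrization step. Its heart is a one-variable lemma: for any semialgebraic $h : I \to I$ of bounded complexity, cover $I$ by $\poly_{\beta,r}(1)$ intervals on each of which an algebraic substitution $\psi$ makes $h \circ \psi$ an $r$-function. The standard approach treats the derivatives $h^{(k)}$ in turn, combining a B\'ezout/Rolle bound on the number of sign-intervals of $h^{(k)}$ (which depends only on the degree of the defining polynomials, not on their coefficients) with a power-substitution $\psi(t) = a + (b-a)t^{m}$ to absorb the blow-up of $h^{(k)}$ near the endpoints where it vanishes, iterated for $k = 1,\dots,r$. The delicate point is lifting this one-variable result to the several-variable setting by coordinate-wise iteration, and threading all complexity bounds through both the CAD and the nested induction so that the final constant remains of the required form $C(\beta,r,\ell)$.
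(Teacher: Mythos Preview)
Your outline follows the classical Gromov--Pila--Wilkie scheme and is structurally sound up to a point, but the sentence ``there exist $O_{\beta,r,\ell}(1)$ semialgebraic $r$-maps $\psi : I^{\dim B} \to I^{\dim B}$ \dots\ such that $\xi \circ \phi_{j} \circ \psi$ is an $r$-function'' is precisely the multivariable statement $F_{\ell-1}$, and you have not set up an induction that delivers it. You prove $F_{1}$ (the one-variable reparametrization) and you use $F_{\ell-1}$ to obtain $S_{\ell}$, but nowhere do you close the loop by showing $F_{\ell}$ from lower data. The phrase ``coordinate-wise iteration'' does not do this: reparametrizing in $x_{2},\dots,x_{\ell}$ with maps that depend on $x_{1}$ creates new $x_{1}$-derivatives (via $\partial_{x_{1}}$ of the reparametrizing map), and nothing in a naive iteration controls those. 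You acknowledge this (``the delicate point''), but acknowledging a gap is not the same as closing it.

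The paper's route --- which, though stated for forts and \so-minimal structures, specializes to the semialgebraic case --- closes this gap with two ingredients you are missing. First, the induction is a \emph{joint} induction on $S_{\ell}$ and $F_{\ell}$ (in fact on their family versions $S_{\ell,k},F_{\ell,k}$), with the step $S_{\leq\ell}+F_{\leq\ell-1}\to F_{\ell}$ being the nontrivial direction. Second, that step is carried out by (i) treating $x_{1}$ as a parameter and invoking $F_{\ell-1}$ fiberwise to make each $f(x_{1},\cdot)$ an $r$-function, using a Lipschitz-in-fiber argument (Lemma~\ref{lem:family_of_lipshitz}) to recover continuity in $x_{1}$; and then (ii) an induction on the lexicographically first unbounded derivative, where a definable curve $\gamma(x_{1})$ is chosen near the supremum of $|f^{(\alpha)}(x_{1},\cdot)|$ and $F_{1}$ is applied to the one-variable data $f^{(\alpha-1_{1})}\circ\gamma$ to reparametrize $x_{1}$. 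The key analytic fact enabling (ii) is that once all derivatives of order $<\alpha$ are bounded, $f^{(\alpha)}(x_{1},\cdot)$ is bounded for all but finitely many $x_{1}$ (Lemma~\ref{lem:boundedness_of_derivatives}, originally Lemma~12 of \cite{ALR}). None of this is captured by a power substitution in one variable followed by ``iteration''.
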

\begin{rem}
\label{rem:cellshout}
G.Binyamini and D.Novikov \cite{ComplexCells} prove that $C$ can be bounded from above by $\polyl(\beta)\cdot r^{\mu}$, and moreover that $f_i$ may be taken to be semialgebraic of complexity $\polyl(\beta,r)$. 
\end{rem}
Pila and Wilkie \cite{Pila-Wilkie} generalized Theorem \ref{thm:ayg} to the o-minimal setting in order to prove their celebrated Pila-Wilkie counting theorem about rational points on definable sets. We will now state their version. In the general o-minimal setting the notion of complexity is not available, and will be replaced by uniformity over families. For an introduction on o-minimal sturtures, see \cite{tame}. Fix an o-minimal structure, and an $r\in\bb{N}$.
\begin{thm}[\cite{Pila-Wilkie}, Corollary 5.2]
\label{thm:oyg}
Let $\{X_{\lambda}\subset I^{\ell}\}_{\lambda\in I^{k}}$ be a family of  definable sets such that $\dim X_{\lambda}\leq\mu$ for all $\lambda$. Then there exists a constant $C=C(X,r)$ such that for each $\lambda$ there are definable maps $f_{1,\lambda},\dots,f_{C,\lambda}:I^{\mu}\to X_{\lambda}$ whose images cover $X_{\lambda}$. 
\end{thm}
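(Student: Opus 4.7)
The plan is to prove Theorem~\ref{thm:oyg} by induction on $\mu$, following the Pila--Wilkie strategy, and reducing everything to a core \emph{reparametrization lemma} for finite tuples of definable functions. Applying uniform o-minimal cell decomposition to the family $\{X_{\lambda}\}$ produces, for each $\lambda$, a decomposition into at most $C'(X)$ cells, with $C'$ independent of $\lambda$. It therefore suffices to parametrize one cell at a time. A cell of dimension $d < \mu$ will be parametrized by an $r$-map from $I^{d}$ and then extended to an $r$-map from $I^{\mu}$ by pre-composition with the projection $\pi^{\mu}_{d}$, so the essential case is top-dimensional cells.

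A $\mu$-dimensional cell in $I^{\ell}$ is, after permuting coordinates, the graph of a definable tuple $g_{\lambda} = (g_{1,\lambda}, \ldots, g_{\ell-\mu,\lambda})$ over an open cell $D_{\lambda} \subset I^{\mu}$. Parametrizing the graph by an $r$-map reduces to the following reparametrization lemma: there is a constant $C$ depending only on the family and on $r$ such that for every $\lambda$ the cell $D_{\lambda}$ is covered by the images of $C$ definable $r$-maps $\phi_{i,\lambda}: I^{\mu} \to D_{\lambda}$ for which each composition $g_{j,\lambda} \circ \phi_{i,\lambda}$ is again an $r$-function. In the one-dimensional base case, one invokes the o-minimal $C^{r}$ monotonicity theorem to partition $I$ into finitely many subintervals on which every $g_{j,\lambda}$ and all of its derivatives up to order $r$ are continuous and monotonic, and then flattens each successive derivative by a sequence of definable changes of variable.

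For the inductive step one treats the last coordinate first: the one-dimensional result applied fiberwise controls all $x_{\mu}$-derivatives of the $g_{j,\lambda}$. The mixed derivatives in $(x_{1}, \ldots, x_{\mu-1})$ are then controlled by enlarging the tuple of functions to include the intermediate partials produced by the chain rule, and applying the $(\mu-1)$-dimensional inductive hypothesis to this enlarged tuple. The main obstacle, and the reason the Yomdin--Gromov method is delicate, is the interleaving of reparametrizations in different coordinates: a new reparametrization can in principle destroy bounds already obtained, so one must carefully track how chain-rule expansions enlarge the auxiliary tuple at each stage. Uniformity in $\lambda$ is preserved throughout because cell decomposition, monotonicity partitions, and every reparametrization used along the way are definable in families, so the number of pieces depends only on the defining data of $\{X_{\lambda}\}$ and on $r$.
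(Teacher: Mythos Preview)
The paper does not give its own proof of Theorem~\ref{thm:oyg}; the statement is quoted from \cite{Pila-Wilkie} as background, and the paper's own contributions are the stronger Theorems~\ref{thm:cyg}, \ref{thm:mri}, and~\ref{thm:mris}, proved via the fort formalism of Sections~3--6. Your sketch follows the original Pila--Wilkie strategy, which is the natural choice for this statement taken in isolation. By contrast, the paper's route to the stronger results replaces the ad hoc bookkeeping of tuples of auxiliary functions by the categorical language of forts and cylindrical parametrizations, which makes the induction cleaner and, crucially, gives the sharp bounds needed in the \so-minimal setting.

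That said, your proposal is a plan rather than a proof, and the inductive step as written has a real gap. You say that after controlling the $x_{\mu}$-derivatives fiberwise one can ``enlarge the tuple of functions to include the intermediate partials produced by the chain rule'' and apply the $(\mu-1)$-dimensional hypothesis. But the reparametrization lemma you invoke applies only to \emph{bounded} functions taking values in $I$, and the $x_{1},\dots,x_{\mu-1}$-partials of the fiberwise reparametrization are not bounded a priori: the 1-variable step gives no control whatsoever in the transverse directions. This is precisely the obstacle you name in your final paragraph, and naming it is not the same as resolving it. The actual argument---both in \cite{Pila-Wilkie} and in Section~6 of this paper---proceeds by an inner induction on the lexicographically first multi-index $\alpha$ for which $f^{(\alpha)}$ is unbounded, and hinges on a non-obvious lemma (Lemma~\ref{lem:boundedness_of_derivatives} here, which is Lemma~12 of \cite{ALR}): if all $\partial/\partial x_{i}$ for $i\geq 2$ are bounded, then $\partial/\partial x_{1}$ is bounded on all but finitely many fibers $\{x_{1}=\mathrm{const}\}$. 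One then uses a definable choice of a near-maximizing curve $\gamma(x_{1})$ and reparametrizes $x_{1}$ so that $f^{(\alpha)}\circ\gamma$ becomes an $r$-function along it; a chain-rule computation shows this also tames $f^{(\alpha)}$ on the whole cell. Without this boundedness lemma and the curve-selection trick, your induction does not close.
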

For a more detailed exposition on this lemma, its applications and limitations, see \cite{ComplexCells,ALR}. In \cite{ALR}, Binyamini and Novikov strengthen Theorem \ref{thm:oyg} by showing that the maps $f_{i,\lambda}$ can be chosen to be \emph{cellular}, see Definition \ref{defn:cellular_map} below. 
\begin{defn}
A basic cell $C\subset\bb{R}^{\ell}$ of length $\ell$ is a product of $\ell$ points $\{0\}$ and intervals $I$.
\end{defn}
\begin{rem}
While a basic cell $C$ is not generally a domain, we will often implicitly identify it with the basic cell obtained by omitting the \{0\} factors from $C$. In particular there is a natural meaning for a map $f:C\to\bb{R}$ to be $C^{r}$.
\end{rem}
\begin{defn}
\label{defn:cellular_map}
Let $X,Y\subset\bb{R}^{\ell}$. A map $f=(f_{1},\dots,f_{\ell}):X\to Y$ is called \emph{precellular} if \begin{enumerate}
    \item It is triangular, that is, the coordinate function $f_i$ depends only on $x_1,\dots,x_{i}$ for every $i=1,\dots,\ell$.
    \item For every $1\leq i\leq\ell$ and every fixed $x_1,\dots,x_{i-1}$, the function $f_i(x_1,\dots,x_{i-1},\cdot)$ is a strictly increasing function.
\end{enumerate}
A cellular map is a continuous precellular map. 
\end{defn}
\begin{rem}
\label{rem:autunicell} If $f:X\to Y$ is cellular, then  for any $1\leq j\leq\ell$ the map $f_{1...i}:=(f_1,\dots,f_i):\pi_{i}(X)\to\pi_{i}(Y)$ is cellular as well.
Also, for any $1\leq i\leq n$, if $(x_1,\dots,x_{i})\in\pi_i(X)$ the map $f(x_1,\dots,x_{i},\cdot):\pi_{i}^{-1}(x_1,\dots,x_{i})\to\pi_i^{-1}(f_{1\dots i}(x_1,\dots,x_{i}))$ is cellular. 
\end{rem}
We now state the main result of \cite{ALR}. The notation $S_{\ell}$ and $F_{\ell}$ below come from \say{set} and \say{function} respectively.
\begin{thm}[\cite{ALR}, Theorem 19]
\label{thm:cyg}
Let $\ell,r\in\bb{N}$, then:\\
$S_{\ell}$: For every definable set $X\subset I^{\ell}$ there exists a finite collection $\{C_{\alpha}\}$ of basic cells of length $\ell$ and a collection of cellular $r$-maps $\{\phi^\alpha:C_{\alpha}\to X\}$ such that $X=\cup_{\alpha}\phi^{\alpha}(C_{\alpha})$. \\ \\
$F_{\ell}$: For every pair $(X,F)$ of a definable set $X\subset I^{\ell}$ and a definable map $F:X\to I^{q}$ (for any $q\in\bb{N})$ there exists a finite collection $\{C_{\alpha}\}$ of basic cells of length $\ell$ and a collection of cellular $r$-maps $\{\phi^\alpha:C_{\alpha}\to X\}$ such that $X=\cup_{\alpha}\phi^{\alpha}(C_{\alpha})$, and for each $\alpha$ the map $(\phi^{\alpha})^{*}F$ is an $r$-map.
\end{thm}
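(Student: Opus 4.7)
The plan is to argue by induction on $\ell$, with base case $\ell=0$ being trivial since basic cells of length zero are points. In the inductive step I would first derive $F_{\ell-1}\Rightarrow S_\ell$, and then $F_{\ell-1}\wedge S_\ell\Rightarrow F_\ell$.

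For $F_{\ell-1}\Rightarrow S_\ell$, apply an o-minimal cell decomposition to $X\subset I^\ell$ compatibly with $\pi^{\ell}_{\ell-1}$, so that each constituent cell of $X$ is either a graph $\Gamma_g=\{(x',g(x')):x'\in C'\}$ or a band $B_{f,g}=\{(x',t):x'\in C',\, f(x')<t<g(x')\}$ above a cell $C'\subset I^{\ell-1}$. In the graph case, apply $F_{\ell-1}$ to the pair $(C',g)$ to obtain cellular $r$-maps $\phi^\alpha:C'_\alpha\to C'$ with $g\circ\phi^\alpha$ an $r$-function, then lift to $\tilde\phi^\alpha(x',0):=(\phi^\alpha(x'),(g\circ\phi^\alpha)(x'))$ on the basic cell $C'_\alpha\times\{0\}$. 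In the band case, apply $F_{\ell-1}$ to $(C',(f,g))$ so that both $\tilde f:=f\circ\phi^\alpha$ and $\tilde g:=g\circ\phi^\alpha$ are $r$-functions, and lift via linear interpolation $\tilde\phi^\alpha(x',x_\ell):=(\phi^\alpha(x'),(1-x_\ell)\tilde f(x')+x_\ell\tilde g(x'))$ on $C'_\alpha\times I$. The last coordinate is strictly increasing in $x_\ell$ (as $\tilde f<\tilde g$), so $\tilde\phi^\alpha$ is cellular, and its $C^r$ norm is controlled by $\|\tilde f\|_r,\|\tilde g\|_r\leq 1$ together with linearity in $x_\ell$.

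For $F_\ell$, refine the decomposition of $X$ so that $F$ is $C^r$ on each cell. Graph cells are handled directly by invoking $F_{\ell-1}$ on the combined target $(g,F|_{\Gamma_g})$ regarded as a definable map on $C'$. Band cells are harder: the natural parametrization above makes $\tilde\phi^{\alpha*}F$ controlled in the $x'$ directions (by the chain rule, applying $F_{\ell-1}$ with $F$ as an additional target on the base), but the $x_\ell$-derivatives need not be bounded. To fix this I would invoke a \emph{uniform fiberwise one-variable reparametrization lemma}: for a definable family $\{h_{x'}:I\to I^q\}_{x'\in C'_\alpha}$, one can refine $C'_\alpha\times I$ into finitely many sub-bands and cellularly reparametrize each (with rescalings depending cellularly on $x'$) so that the pullback of $h$ becomes an $r$-function uniformly in $x'$. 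Composition with the natural parametrization delivers the required $\phi$.

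The main obstacle is this uniform reparametrization lemma. The classical Gromov argument partitions $I$ into subintervals on which successive derivatives of a single $C^r$ function grow in controlled ratios, then flattens them via power substitutions $t\mapsto a+t^N$. Implementing this definably uniformly in a parameter $x'$ while keeping both the sub-band partition and the rescaling cellular relative to the base is delicate; fortunately, the admissible reparametrizations (affine and power maps) preserve strict monotonicity, so cellularity is compatible with the classical construction. The bulk of the technical work lies in using auxiliary cell decompositions of the parameter space to track where the derivatives of $h_{x'}$ vanish or blow up as $x'$ varies, and completing a secondary induction on $r$ to produce the uniform rescalings.
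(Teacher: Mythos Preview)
Your inductive scheme and your argument for $F_{\ell-1}\Rightarrow S_\ell$ via affine interpolation in the last coordinate are correct and essentially identical to what the paper does in Section~5.

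The gap is in your $F_\ell$ step for band cells. You write that after the natural lift, ``$\tilde\phi^{\alpha*}F$ is controlled in the $x'$ directions by the chain rule, applying $F_{\ell-1}$ with $F$ as an additional target on the base''. But $F$ is not a function on the base $C'$: it genuinely depends on $x_\ell$, so it cannot be added to the target of $F_{\ell-1}$. By the chain rule, $\partial_{x'_i}(\tilde\phi^{\alpha*}F)$ involves $\partial_{y_j}F$ evaluated on the band, and these may be unbounded regardless of how well $\phi^\alpha,\tilde f,\tilde g$ behave. Even if you reinterpret this as a parametric application of $F_{\ell-1}$ with $x_\ell$ as parameter, the resulting base reparametrizations depend on $x_\ell$ and need not assemble into a cellular map. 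And even granting all that, your ``uniform fiberwise one-variable reparametrization'' would only bound the pure $x_\ell$-derivatives for each fixed $x'$; it says nothing about mixed derivatives $\partial_{x'}^{\beta}\partial_{x_\ell}^{j}(\tilde\phi^{\alpha*}F)$, which is where the real difficulty lies.

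The paper (following \cite{ALR}) resolves this by singling out $x_1$ rather than $x_\ell$. It first applies a \emph{family} version of $F_{\ell-1}$ with $x_1$ as parameter, so that $f(x_1,\cdot)$ is an $r$-function for every fixed $x_1$; a separate Lipschitz-continuity lemma (Lemmas~\ref{lem:family_of_lipshitz}--\ref{lem:family_of_lipshitz_family_version}) is needed to glue the fiberwise maps into a single morphism. The decisive ingredient you are missing is then Lemma~\ref{lem:boundedness_of_derivatives} (Lemma~12 of \cite{ALR}): if $|\partial_{x_i}f|\le 1$ for all $i\ge 2$, then $\partial_{x_1}f(x_1,\cdot)$ is bounded for all but finitely many values of $x_1$. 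This enables an induction on the lexicographically first unbounded multi-index $\alpha$, at each stage reparametrizing only $x_1$ via $F_1$ applied to $f^{(\alpha-1_1)}$ along a definable near-maximizing curve. Without an analogue of this boundedness lemma, the one-variable Gromov trick alone cannot close the induction.
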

\begin{rem}
This formulation is automatically uniform over families, due to Remark \ref{rem:autunicell}. Also, note that $F_{\ell}$ follows from $S_{\ell+1}$.
\end{rem}
Though the proof of \cite[Theorem 19]{ALR} is less technically involved than previous proofs, it does not provide polynomial bounds in $q$ in the semi-algebraic or Pfaffian cases. Originally one of the main purposes of this paper was to augment the proof of Theorem \ref{thm:cyg} to obtain polynomial bounds in $q$ in these cases. However, due to the recent development of sharply o-minimal structures (\so-minimal for short, see \cite{Wilkie_conj_Proof,SharpIMC,sharp_intro}), it is natural to generalize this proof to work in the setting of \so-minimal structures.

Recall that in \so-minimal structures every definable set is associated with two natural numbers, \say{format} $\cc{F}$ and \say{degree} $D$, generalizing ambient dimension and complexity respectively from semialgebraic geometry. For the complete definition, see Definition \ref{defn:so-minimality}. For a detailed introduction, see \cite{sharp_intro}.

Our main result (see Theorems \ref{thm:mri},\ref{thm:mris} below) in particular implies the following version of Theorem \ref{thm:cyg} for \so-minimal structures, with polynomial bounds in $q$.
\begin{thm}
\label{thm:for_show}
Let $\Sigma=(\mathbf{S},\Omega)$ be a sharply o-minimal structure.\\
\s $S_{\ell}$: Let $X\subset I^{\ell}$ have format $\cc{F}$ and degree $D$. Then there exists a collection $\{C_{\alpha}\}$ of $\comp$ basic cells and cellular $r$-maps $\{\phi^{\alpha}:C_{\alpha}\to X\}$ such that $X=\cup_{\alpha}\phi^{\alpha}(C_{\alpha})$. \\ \\ 
\s $F_{\ell}$: Let a definable set $X\subset I^{\ell}$ have format $\cc{F}$ and degree $D$, and a definable map $F:X\to I^{q}$ such that for every $j=1,\dots,q$ the coordinate functions $F_{j}$ of $F$ have format $\cc{F}$ and degree $D$. Then there exists a collection $\{C_{\alpha}\}$ of $\poly_{\cc{F}}(D,q)$ basic cells of length $\ell$ and a collection of cellular $r$-maps $\{\phi^\alpha:C_{\alpha}\to X\}$ of format $\format$ and degree $\poly_{\cc{F}}(D)$ such that $X=\cup_{\alpha}\phi^{\alpha}(C_{\alpha})$, and for each $\alpha$ the map $(\phi^{\alpha})^{*}F$ is an $r$-map.
\end{thm}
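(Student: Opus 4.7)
The plan is to mirror the inductive architecture of \cite[Theorem 19]{ALR}, tracking the \so-minimal format and degree at every step and handling the $q$ function coordinates of $F$ in parallel rather than cumulatively so that the $q$-dependence stays polynomial. I would prove $\s S_{\ell}$ and $\s F_{\ell}$ simultaneously by induction on $\ell$.

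The one-variable base case is a reparametrization lemma: given $q$ definable functions $f_{1},\dots,f_{q}:I\to I$ each of format $\cc{F}$ and degree $D$, there exist $\poly_{\cc{F}}(D,q)$ open subintervals $J_{\alpha}\subset I$ together with affine cellular maps $\phi^{\alpha}:I\to J_{\alpha}$ such that each pullback $(\phi^{\alpha})^{*}f_{j}$ is an $r$-function. This follows from the classical Gromov reparametrization device---partitioning $I$ according to sign and monotonicity of the first $r$ derivatives of each $f_{j}$ and composing with a suitable power reparametrization on each subinterval---applied to the $q$ functions in parallel. The \so-monotonicity and \so-cell-decomposition theorems produce $\comp$ pieces per function, so the common refinement has at most $q\cdot\comp=\poly_{\cc{F}}(D,q)$ pieces.

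For the inductive step of $\s S_{\ell}$, I take a \so-cylindrical decomposition of $I^{\ell}$ compatible with $X$, of complexity $\comp$. Each band cell is defined by $\{g_{1}<x_{\ell}<g_{2}\}$ over a base $C''\subset I^{\ell-1}$ of degree $\comp$. Applying $\s F_{\ell-1}$ to the pair $(g_{1},g_{2})$ over $C''$ yields $\comp$ basic cells and cellular $r$-maps $\psi$ under which the pullbacks of $g_{1},g_{2}$ are $r$-functions; lifting by the fiberwise affine rescaling $(t_{1},\dots,t_{\ell-1},t_{\ell})\mapsto(\psi(t'),g_{1}(\psi(t'))+t_{\ell}(g_{2}-g_{1})(\psi(t')))$ gives a cellular $r$-map onto the band cell, since its derivatives in $t_{\ell}$ involve only $g_{2}-g_{1}$ and derivatives of $g_{1},g_{2}$, all bounded. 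Graph cells are handled identically with the rescaling replaced by evaluation at $g_{1}$. For $\s F_{\ell}$, I do the same but feed the traces of $F$ on the base cells into $\s F_{\ell-1}$ alongside $(g_{1},g_{2})$, and then apply the one-variable base case in family form to the fiber functions $x_{\ell}\mapsto F(x_{1},\dots,x_{\ell-1},x_{\ell})$ on each lifted cell; the family version is automatic by uniformity in \so-parameters and produces a further $\poly_{\cc{F}}(D,q)$ refinement on which $(\phi^{\alpha})^{*}F$ is an $r$-map.

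The main obstacle is precisely the polynomial-in-$q$ dependence. A naive reduction of $\s F_{\ell}$ to $\s S_{\ell+q}$ via the graph of $F$ fails because the resulting set has format $\cc{F}+q$ rather than $\cc{F}$, so the bound $\poly_{\cc{F}+q}(D)$ is not polynomial in $q$. The key technical point is that at each of the $\ell$ coordinate stages the $q$ function coordinates of $F$ can be reparametrized in parallel, contributing a factor linear in $q$ per stage and thus a total cost polynomial in $q$. This is where \so-minimality (rather than ordinary o-minimality) is essential: the $\comp$ bound in the \so-cell-decomposition and \so-monotonicity theorems ensures that every parallel reparametrization step stays polynomial in $(D,q)$, and the assembled cellular maps inherit format $\format$ and degree $\comp$ as required.
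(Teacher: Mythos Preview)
Your inductive step for $\s F_{\ell}$ has a genuine gap. The phrase ``feed the traces of $F$ on the base cells into $\s F_{\ell-1}$'' is not well-defined: $F$ depends on all $\ell$ variables and has no trace on the $(\ell-1)$-dimensional base. More importantly, even granting your two stages---reparametrize the base via $\s F_{\ell-1}$, then the fiber variable $t_{\ell}$ via the one-variable lemma in family form---nothing bounds the derivatives $\partial^{\alpha}(F\circ\phi)$ with $\alpha_{i}>0$ for some $i<\ell$. The base reparametrization never sees $\partial F/\partial x_{i}$ at interior heights $x_{\ell}\in(g_{1},g_{2})$, and the fiber reparametrization $t_{\ell}\mapsto\rho_{t'}(t_{\ell})$ you produce has uncontrolled $t'$-derivatives. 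So after your construction $(\phi^{\alpha})^{*}F$ is an $r$-map in $t_{\ell}$ alone, not an $r$-map.

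The paper's route (following \cite{ALR}, made sharp) is essentially different here. It singles out $x_{1}$ rather than $x_{\ell}$: one first applies $F_{\ell-1}$ \emph{with $x_{1}$ carried as an additional parameter}, so that $f(x_{1},\cdot)$ becomes an $r$-function for every fixed $x_{1}$; one then runs a second induction on the lexicographically smallest multi-index $\alpha$ (necessarily with $\alpha_{1}>0$) for which $|f^{(\alpha)}|$ is still unbounded. Each pass of that inner loop uses the key boundedness lemma \cite[Lemma~12]{ALR}---if $|\partial f/\partial x_{i}|\le 1$ for $i\ge 2$ then $\partial f/\partial x_{1}(x_{1},\cdot)$ is bounded off finitely many values of $x_{1}$---together with sharp definable choice of a curve $\gamma(x_{1})$ nearly realizing $\sup_{x_{2\dots\ell}}|f^{(\alpha)}(x_{1},\cdot)|$, followed by a one-variable reparametrization of $x_{1}$ making $f^{(\alpha-1_{1})}\circ\gamma$ an $r$-function. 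Your outline contains neither the boundedness lemma nor this curve-selection argument, and there is no way to avoid them: they are precisely what converts control of the $(x_{2},\dots,x_{\ell})$-derivatives into control of the mixed derivatives. Finally, your assertion that ``the family version is automatic by uniformity in \so-parameters'' is exactly the step the paper flags as non-sharp; the family statements $F_{\ell,k}$ with an explicit parameter count must be carried through the induction, which is why the paper formulates and proves $F_{\ell,k}$ rather than $F_{\ell}$ alone.
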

The statement \s $S_{\ell}$ in the (restricted) subPfaffian case is due to Binyamini, Jones, Schmidt and Thomas \cite{when}, and a rewording of their proof works in the general \so-minimal case. The statement \s $F_{\ell}$ however is stronger and it does not follow from \s $S_{\ell+1}$. In fact, our main result is notably stronger than Theorem \ref{thm:for_show}. We prove the existence of a cellular $r$-parametrization with strict control on the combinatorial and geometric structure of the parametrizing maps. See Theorems \ref{thm:mri},\ref{thm:mris} below. We start with reviewing the notion of cylindrical decomposition, and introducing the notion of cylindrical parametrization. 

\subsection{Cells and Cylindrical Decompositions}
Fix an o-minimal expansion of $\bb{R}$. We review the notions of cells as they are presented in \cite{tame}.
\begin{defn}[Cells]
\label{defn:cells}
A cell of length $1$ is a subset $\C\subset\bb{R}^1$ which is either a point or an open interval, and its type is defined to be either $(0)$ or $(1)$ respectively. \\ \\
Let $\C\subset\bb{R}^{\ell}$ be a cell of length $\ell$ with type $\tau\in\{0,1\}^{\ell}$, and let $f,g:\C\to\bb{R}_{>0}$ be continuous definable functions with $f<g$ everywhere. A cell $\widetilde{\C}$ of length $\ell+1$ is one of the following sets:
\begin{itemize}
    \item $\widetilde{\C}=\C\odot(f,g):=\{(x,y):\;x\in\C,\;f(x)<y<g(x)\}$, in which case the type of $\widetilde{\C}$ is $(\tau,1)\in\{0,1\}^{\ell+1}$.
    \item $\widetilde{\C}=\C\odot f:=\{(x,y):\;x\in\C\,;y=f(x)\}$, in which case the type of $\widetilde{\C}$ is $(\tau,0)\in\{0,1\}^{\ell+1}$.
\end{itemize}
A cell $\C$ is \emph{compatible} with a set $X$ if either $\C\subset X$ or $\C\cap X=\emptyset$. A collection $\{\C_{1},\dots,\C_{n}\}$ is a cellular decomposition of $X$ if the cells $\C_{i}$ are pairwise disjoint and $\cup_{i}\C_{i}=X$. 
\end{defn}
We will use the following stronger notion, see Figure 1.
\begin{defn}[Cylindrical decompositions]\label{defn:cyldec}
Let $X\subset I$ be a definable set. A cylindrical decomposition of $X$ is a cellular decomposition of $X$. Let $X\subset I^{\ell}$ be definable. A cellular decomposition $\Phi$ of $X$ is called a cylindrical decomposition of $X$ if the collection $\pi_{\ell-1}(\Phi):=\{\pi_{\ell-1}(\cc{C})|\;\cc{C}\in\Phi\}$ is a cylindrical decomposition of $\pi_{\ell-1}(X)$.
\end{defn} 
\begin{figure}
\centering
	\includegraphics[width=0.8\linewidth]{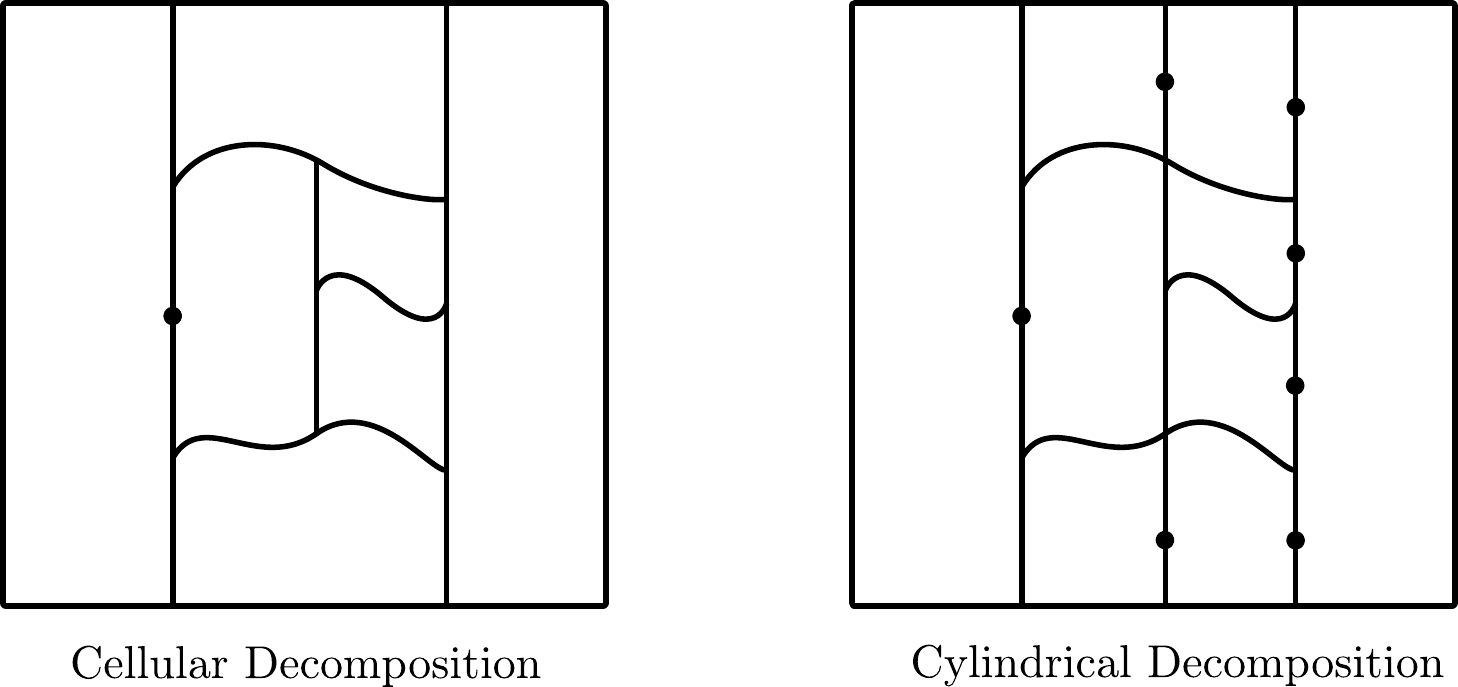}
	\caption{An example of a cellular decomposition of $I^{2}$ which is not a cylindrical decomposition, and a cylindrical decomposition of $I^{2}$ \emph{refining} (see Definition \ref{defn:cyl_dec_refine}) it.}
	\label{fig:cell_vs_cyl}
\end{figure}
\subsection{Sharp cylindrical decomposition}
Let us recall the definition of sharp cellular decomposition from \cite{sharp_intro}.
\begin{defn}
Let $\mathbf{S}$ be an o-minimal structure and $\Omega$ be an FD-filtration on $\mathbf{S}$. We say that $(\mathbf{S},\Omega)$ has \emph{sharp cylindrical decomposition} (or \emph{\s CD} for short) if whenever $X_1,\dots, X_{s}\subset I^{\ell}$ are definable sets of format $\cc{F}$ and degree $D$, there exists a cylindrical decomposition of $I^{\ell}$ into $\poly_{\cc{F}}(D,s)$ cells of format $O_{\cc{F}}(1)$ and degree $\poly_{\cc{F}}(D)$ compatible with $X_1,\dots,X_{s}$.
\end{defn}
It is not generally known if every \so-minimal structure has \s CD. However, it was shown in \cite{sharp_intro} that for quantitative applications one can always reduce to the case of a \so-minimal structure with \s CD, see \cite[Remark 1.3, Theorem 1.9]{sharp_intro} for more details. We will therefore assume that our \so-minimal structure has \s CD, and this will be sufficient for quantitative applications.

For example, while our main results are not known to be true as stated for general \so-minimal structures, they are sufficient to prove a sharp version of the Pila-Wilkie counting theorem for every \so-minimal structure, see section \ref{sec:applications} for more details.
\subsection{Cylindrical Parametrizations and the Main Result}
Our goal is to strengthen Theorem \ref{thm:cyg} in such a way that the maps $\phi^{\alpha}$ have additional combinatorial properties. More precisely, we prove that they can be chosen to form a \emph{Cylindrical parametrization}, see Definition \ref{defn:cylparam} below.  
\begin{defn}
\label{defn:cylparam}
Let $\Phi=\{\C_{\alpha}\}_{\alpha\in A}$ be a cylindrical decomposition of $X\subset I^{\ell}$. A cylindrical parametrization of $\Phi$ is a collection of surjective cellular maps $\{\phi^{\alpha}:C_{\alpha}\to\C_{\alpha}\}_{\alpha\in A}$, where $C_{\alpha}$ is a basic cell of the same type as $\C_{\alpha}$, such that the following holds: \\ 
For any two cells $\C_{\alpha},\C_{\gamma}$, and for every $1\leq k\leq\ell-1$, if $\pi_{k}(\C_{\alpha})=\pi_{k}(\C_{\gamma})$ then $\phi^{\alpha}_{k}\circ\pi_{k}=\phi^{\gamma}_{k}\circ\pi_{k}$. 
\end{defn}
\begin{ex}
A cylindrical parametrization of the cylindrical decomposition from Figure 1 can be viewed as a surjective, precellular map $\phi:\mathscr{F}\to I^{2}$, where $\mathscr{F}$ is the set in  figure 2 below. Notice that $\mathscr{F}$ has a natural decomposition into cells that are integer translations of basic cells, and $\phi$ is continuous when restricted to any such cell. The sets $\mathscr{F},I^{2}$ are examples of \emph{forts}, and $\phi$ is an example of a \emph{morphism} between forts, see Definition \ref{defn:fortmor}.
\end{ex}
\begin{figure}
\centering
	\includegraphics[height=0.5\linewidth]{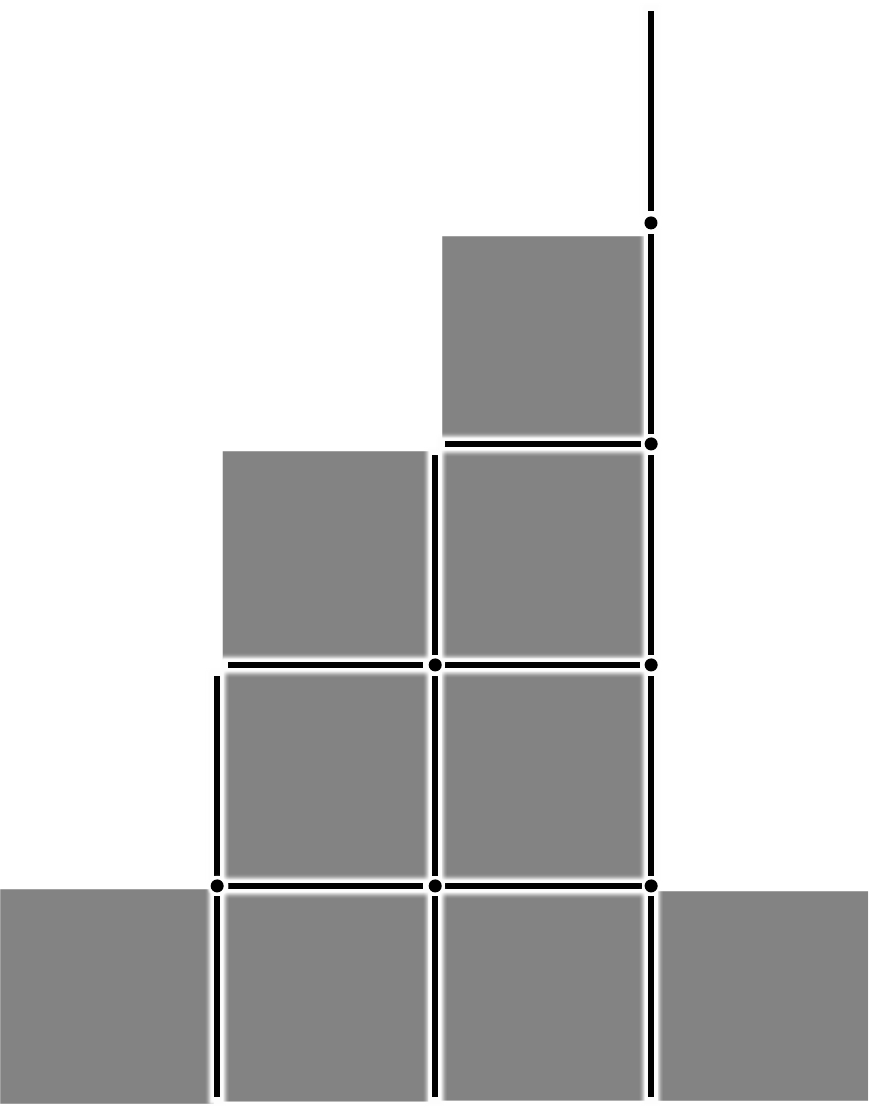}
	\caption{The fort
	     $(0,1)\times (0,1)\bigcup\{1\}\times(0,2)\bigcup(1,2)\times(0,3)\bigcup\{2\}\times(0,3)\bigcup(2,3)\times(0,4)\bigcup\{3\}\times(0,5)\bigcup(3,4)\times(0,1)$.}
	\label{fig:fort}
\end{figure}
\begin{defn}
\label{defn:cyl_dec_refine}
Given two cylindrical decompositions $\Phi=\{\C_{\alpha}\}_{\alpha\in A}$ and $\Phi'=\{\C_{\beta}\}_{\beta\in B}$ of a set $X$, we say that $\Phi'$ is a refinement of $\Phi$ if there exists a map $s:B\to A$ such that $\C_{\beta}\subset\C_{s(\beta)}$ for all $\beta\in B$.
\end{defn}
A cylindrical parametrization will be called an $r$-parametrization if all its maps are $r$-maps. We now state our main results.
\begin{thm}\label{thm:mri} Fix $\ell,r\in\bb{N}$. \\
$S^{*}_{\ell}$: Let $\Phi$ be a cylindrical decomposition of $I^{\ell}$. Then there exists a refinement $\Phi'$ of $\Phi$ that admits a cylindrical $r$-parametrization. \\ \\
$F^{*}_{\ell}$: Let $\Phi=\{\C_{\alpha}\}_{\alpha\in A}$ be a cylindrical decomposition of $I^{\ell}$, together with a collection of definable maps $\{f_{\C_{\alpha},j}:\C_{\alpha}\to I\}_{\alpha\in A\;,j\in J}$.Then there exists a refinement $\Phi'=\{\C'_{\gamma}\}_{\gamma\in A'}$ of $\Phi$ that admits a cylindrical $r$-parametrization $\{\phi^{\gamma}:C_{\gamma}\to\C'_{\gamma}\}_{\gamma\in A'}$ such that if $\C'_{\gamma}\subset\C_{\alpha}$, then $(\phi^{\gamma})^{*}f_{\C_{\alpha},j}$ is an $r$-function for every $j$.
\end{thm}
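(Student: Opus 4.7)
The plan is to establish $F^*_\ell$ by induction on $\ell$; the statement $S^*_\ell$ follows as the subcase $J=\emptyset$. The base $\ell=1$ is the classical one-variable Yomdin-Gromov argument: on each interval cell, subdivide finitely many times along level sets of the higher derivatives of the listed functions, then apply an affine rescaling on each sub-interval to bring the pulled-back $C^r$ norms below one, producing the required refinement and cellular parametrization.

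For the inductive step, project to $\bar{\Phi}:=\pi_{\ell-1}(\Phi)$, a cylindrical decomposition of $I^{\ell-1}$. Above each $\C\in\bar{\Phi}$, the cells of $\Phi$ are graphs of a strictly increasing sequence of definable functions $g_{\C,1}<\cdots<g_{\C,m(\C)}:\C\to I$ together with the open bands between consecutive graphs. Apply the inductive hypothesis $F^*_{\ell-1}$ to $\bar{\Phi}$ with auxiliary functions consisting of (a) all defining functions $g_{\C,i}$, and (b) for every graph cell $\C_\alpha=\C\odot g_{\C,i}$ and every $j\in J$, the section $x\mapsto f_{\C_\alpha,j}(x,g_{\C,i}(x))$. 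This yields a refinement $\bar{\Phi}'$ of $\bar{\Phi}$ with a cylindrical $r$-parametrization $\{\psi^\alpha:D_\alpha\to\bar{\C}'_\alpha\}$ for which every listed function pulls back to an $r$-function.

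Lift to a preliminary refinement of $\Phi$ whose cells above each $\bar{\C}'_\alpha\subset\C$ are the graphs $\bar{\C}'_\alpha\odot g_{\C,i}|_{\bar{\C}'_\alpha}$ and the bands $\bar{\C}'_\alpha\odot(g_{\C,i},g_{\C,i+1})|_{\bar{\C}'_\alpha}$. Graph cells are parametrized by $x\mapsto(\psi^\alpha(x),g_{\C,i}(\psi^\alpha(x)))$, which is a cellular $r$-map and pulls back each $f_{\C_\alpha,j}$ to an $r$-function by construction. It remains to handle each band cell: find a cellular map $h:D_\alpha\times(0,1)\to\bb{R}$, strictly increasing in $t$, interpolating between $(\psi^\alpha)^{*}g_{\C,i}$ and $(\psi^\alpha)^{*}g_{\C,i+1}$, such that $h$ is itself an $r$-function and every composition $(x,t)\mapsto f_{\C_\beta,j}(\psi^\alpha(x),h(x,t))$ is also an $r$-function.

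This last step is the main obstacle. The strategy is a fibered Yomdin-Gromov trick: further subdivide each band along finitely many auxiliary definable sections chosen so that on every resulting sub-band the $t$-derivatives of the relevant compositions are uniformly bounded, then apply affine rescaling in $t$ to push them below one. Each new cutting section is a function on $\bar{\C}'_\alpha$ that must be fed back into $F^*_{\ell-1}$, possibly forcing a further refinement of $\bar{\Phi}'$ before the final lift; the crux is showing this back-and-forth iteration terminates, which I expect to follow from an o-minimal well-foundedness argument bounding the number of rounds in terms of the combinatorics of $\Phi$ and of $r$ alone. Mixed partials of order at most $r$ are controlled via Fa\`a di Bruno's formula, exploiting that every partial derivative of $\psi^\alpha$ is already bounded by the inductive hypothesis.
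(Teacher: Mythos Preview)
Your treatment of band cells has a genuine gap. The ``fibered Yomdin--Gromov trick'' you sketch controls only the $t$-derivatives of $(x,t)\mapsto f_{\C_\beta,j}(\psi^\alpha(x),h(x,t))$, but to make this an $r$-function you need all mixed partials bounded. Fa\`a di Bruno alone does not close this: the chain-rule expansion of a mixed partial $\partial_x^\mu\partial_t^\nu$ contains terms of the form $(\partial^\kappa f_{\C_\beta,j})(\psi^\alpha(x),h(x,t))$ with $\kappa$ involving base directions, and nothing in your setup bounds those --- your inductive call to $F^*_{\ell-1}$ fed in only the graph-cell sections $x\mapsto f_{\C_\alpha,j}(x,g_{\C,i}(x))$, not any information about $f_{\C_\beta,j}$ on the interior of the bands. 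Feeding the new cutting sections ``back into $F^*_{\ell-1}$'' does not help, since after each refinement the base-direction partials of $f$ on the sub-bands are still uncontrolled; there is no decreasing invariant, and the hoped-for well-foundedness argument is not there.

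The paper's induction is organised differently and avoids this circularity. It proves a \emph{family} version $F_{\ell,k}$ (with $k$ parameters) and peels off the \emph{first} coordinate rather than the last: one treats $x_1$ as an extra parameter and applies $F_{\ell-1,k+1}$ to make every $f(x_1,\cdot)$ an $r$-function on the slice. The key analytic input is then Lemma~12 of \cite{ALR} (Lemma~\ref{lem:boundedness_of_derivatives} here): once the $x_2,\dots,x_\ell$-partials are bounded by $1$, the derivative $\partial_{x_1}f(x_1,\cdot)$ is bounded for all but finitely many values of $x_1$. One then runs a second induction on the lexicographically first multi-index $\alpha$ with $|f^{(\alpha)}|$ unbounded (necessarily $\alpha_1>0$), choosing via definable choice a curve $\gamma$ nearly realising $\sup_{x_{2\dots\ell}}|f^{(\alpha)}(x_1,\cdot)|$, and applying $F_{1,k}$ to reparametrise $x_1$ so that $f^{(\alpha-1_1)}\circ\gamma$ and $\gamma$ become $r$-functions; a chain-rule computation then shows $|f^{(\alpha)}|$ is tamed after this reparametrisation. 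The use of the parametrised hypothesis $F_{\ell-1,k+1}$ is what lets the argument terminate.
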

This theorem can be rather easily deduced from Theorem \ref{thm:cyg}. However, its sharp counterpart is much more meaningful.
\begin{thm}\label{thm:mris} Fix $\ell,r\in\bb{N}$. \\
$S^{*}_{\ell}$: Let $\Phi$ be a cylindrical decomposition of $I^{\ell}$ of size $N$ into cells of format $\cc{F}$ and degree $D$. Then there exists a refinement $\Phi'$ of $\Phi$ of size $\poly_{\cc{F},r}(D,N)$, such that $\Phi'$ admits a cylindrical $r$-parametrization whose maps have format $O_{\cc{F},r}(1)$ and degree $\poly_{\cc{F},r}(D)$. \\ \\
$F^{*}_{\ell}$: Let $\Phi=\{\C_{\alpha}\}_{\alpha\in A}$ be a cylindrical decomposition of $I^{\ell}$ of size $N$ into cells of format $\cc{F}$ and degree $D$, together with a collection of definable maps $\{f_{\C_{\alpha},j}:\C_{\alpha}\to I\}_{\alpha \in A,\;j\in J}$ such that each $f_{\C_{\alpha,j}}$ has format $\cc{F}$ and degree $D$. Then there exists a refinement $\Phi'=\{\C'_{\gamma}\}_{\gamma\in A'}$ of $\Phi$ of size $\poly_{\cc{F},r}(D,N,|J|)$, and a cylindrical $r$-parametrization $\{\phi^{\gamma}:C_{\gamma}\to\C'_{\gamma}\}$ of $\Phi'$, such that the maps $\phi^{\gamma}$ have format $\rformat$ and degree $\poly_{\cc{F},r}(D)$, where $(\phi^{\gamma})^{*}f_{\C_{\alpha},j}$ is an $r$-function for every $j$ whenever  $\C'_{\gamma}\subset\C_{\alpha}$.
\end{thm}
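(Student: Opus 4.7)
The plan is to establish $S^{*}_{\ell}$ and $F^{*}_{\ell}$ jointly by induction on $\ell$, with the base case $\ell=0$ trivial and each inductive step composed of two halves: the implication $F^{*}_{\ell-1}\Rightarrow S^{*}_{\ell}$ via a fibration argument over $\pi_{\ell-1}$, followed by $S^{*}_{\ell}\Rightarrow F^{*}_{\ell}$ via a sharp reparametrization applied on basic cells. The auxiliary sharp reparametrization lemma on basic cells $I^{k}$ is proved by a separate induction on $k$, using at its base a sharp one-dimensional reparametrization that follows from the \s CD assumption in the spirit of \cite{when}.

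For the first half of the inductive step, project $\Phi$ to the cylindrical decomposition $\pi_{\ell-1}(\Phi)$ of $I^{\ell-1}$ and extract the cell functions $g_{\cc{D},i}:\cc{D}\to I$ that describe the cells of $\Phi$ above each base cell $\cc{D}$; their total count is $O(N)$ and each has format $\format$ and degree $\comp$. Apply $F^{*}_{\ell-1}$ to $\pi_{\ell-1}(\Phi)$ together with this family to obtain a cylindrical $r$-parametrization $\{\psi^{\delta}:D_{\delta}\to\cc{D}_{\delta}\}$ under which every $g_{\cc{D},i}$ pulls back to an $r$-function. Extend each $\psi^{\delta}$ to parametrize the cells of $\Phi$ lying above $\cc{D}_{\delta}$ by appending an extra coordinate: on a graph cell $\cc{D}\odot g_{i}$ use the pulled-back graph itself, and on a band cell $\cc{D}\odot(g_{i},g_{i+1})$ use the linear interpolation
\[
    (s,t)\mapsto\bigl(\psi^{\delta}(s),\;(1-t)(\psi^{\delta})^{*}g_{i}(s)+t(\psi^{\delta})^{*}g_{i+1}(s)\bigr).
\]
Both constructions are cellular, with $C^{r}$ norm bounded by a universal constant in $r$ that is absorbed by a bounded affine subdivision of the last coordinate; the cylindrical structure of $\{\psi^{\delta}\}$ is preserved automatically.

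For the second half, apply $S^{*}_{\ell}$ (just proved) to $\Phi$ to obtain a cylindrical $r$-parametrization $\{\phi^{\alpha}:C_{\alpha}\to\C_{\alpha}\}$ of a refinement of $\Phi$. The pullbacks $(\phi^{\alpha})^{*}f_{\C,j}$ are definable on the basic cells $C_{\alpha}$ of format $\rformat$ and degree $\rcomp$, but generally not $r$-functions. For each $C_{\alpha}$ invoke the auxiliary basic-cell reparametrization lemma with input the family $\{(\phi^{\alpha})^{*}f_{\C,j}\}_{j}$ \emph{together with the coordinate functions of $\phi^{\alpha}$ itself}; it produces cellular reparametrizations $\chi^{\alpha,\beta}$ after which all listed functions pull back to $r$-functions. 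Including $\phi^{\alpha}$ in the input guarantees that $\phi^{\alpha}\circ\chi^{\alpha,\beta}$ is again an $r$-map, absorbing the chain-rule blow-up that would otherwise arise from composition, while the pull-back of each $f_{\C,j}$ along the composed map is an $r$-function by construction.

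The main obstacle is preserving cylindricity across the basic-cell reparametrization step: the reparametrizations $\chi^{\alpha,\beta}$ chosen on different $C_{\alpha}$ must agree on common projections so that the composed maps $\phi^{\alpha}\circ\chi^{\alpha,\beta}$ still form a cylindrical parametrization of a refinement of $\Phi$. This compatibility is exactly what the forts formalism introduced in the paper is designed to enforce: organizing the refinements and reparametrizations as morphisms in the fort category renders the projection-compatibility conditions automatic. A secondary challenge is the sharp bookkeeping---each application of the reparametrization lemma multiplies the number of cells by $\rcomp$ and enlarges degree by a polynomial factor, so one must verify that the cumulative dependence on $D$, $N$, $|J|$ remains polynomial with constants depending only on $\cc{F}$ and $r$.
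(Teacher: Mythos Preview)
Your first implication $F^{*}_{\ell-1}\Rightarrow S^{*}_{\ell}$ is essentially the paper's argument in Section~5 and is fine.

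The gap is in the second implication. You defer the entire difficulty to an ``auxiliary basic-cell reparametrization lemma'' and say it is proved by a separate induction on the dimension of the basic cell. But that induction step is exactly where the sharp version becomes nontrivial. The standard argument for reparametrizing $I^{\ell}$ so that a given $f$ becomes an $r$-function proceeds by freezing $x_{1}$, applying the $(\ell-1)$-dimensional lemma to $f(x_{1},\cdot)$, and then reparametrizing $x_{1}$. The first step requires the $(\ell-1)$-dimensional lemma \emph{uniformly over the parameter $x_{1}$}: the output fort, the number of pieces, and the formats/degrees must be controlled independently of $x_{1}$, and the resulting morphisms must vary definably and combinatorially coherently in $x_{1}$. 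In the non-sharp o-minimal setting one recovers such uniformity from the non-parametric statement by compactness, but the paper explicitly points out (opening of Section~1.6 / ``Structure of this paper'') that this deduction is \emph{not sharp}: the parameter space one must pass to has dimension polynomial in the complexity, which destroys the format bounds. This is why the paper does not prove $S^{*}_{\ell}$ and $F^{*}_{\ell}$ directly but instead formulates parametric statements $S_{\ell,k}$, $F_{\ell,k}$ and proves the step $S_{\leq\ell,k}+F_{\leq\ell-1,\,k+1}\to F_{\ell,k}$, using $F$ at one lower dimension but with one \emph{extra} parameter. Your scheme has no mechanism for gaining that extra parameter, so the ``separate induction on $k$'' for the auxiliary lemma cannot close with sharp bounds.

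A second, related issue: applying the basic-cell lemma independently to each $C_{\alpha}$ gives reparametrizations $\chi^{\alpha,\beta}$ with no a~priori relation across cells sharing a projection; the fort formalism does not make this compatibility ``automatic'' after the fact---it works because the paper carries out the induction on morphisms of forts from the outset, so that a single $r$-morphism $\psi:\mathscr{K}\to\mathscr{F}$ is produced rather than a collection of cellwise maps to be glued. Your outline recognizes the obstacle but does not actually overcome it.
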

\begin{rem}(On the index set $J$)
A priopri, $J$ may depend on $\alpha$. However we can, and always will, add constant functions to the collections so that the index set $J$ can be chosen to be independent of the cells $\C$. 
\end{rem}
Our main results generalize Theorem \ref{thm:cyg} (and Theorem \ref{thm:for_show}) in the following way. Let $X_1,\dots,X_{s}\subset I^{\ell}$ be definable. By o-minimality, there exists a cylindrical decomposition $\Phi$ compatible with $X_i$ for $1\leq i\leq s$. By Theorem \ref{thm:mri}, we may assume that $\Phi$ admits a cylindrical $r$-parametrization $\{\phi^{\alpha}\}_{\alpha\in I}$. Fix $i$, and let $\{\C_\beta\}_{\beta\in J}$ be the cells of $\Phi$ contained in $X_i$. Then the collection $\{\phi^{\beta}\}_{\beta\in J}$ (which is a cylindrical $r$-parametrization of $X_i$) satisfies the requirements of Theorem \ref{thm:cyg} for $X_i$. The proof of Theorem \ref{thm:for_show} is analogous. Indeed, if the sets $X_i$ are defined in a sharply o-minimal structure with \s CD, we obtain sharp cylindrical parametrization using \s CD and Theorem \ref{thm:mris} instead of ordinary cylindrical decomposition and Theorem \ref{thm:mri}. 

\begin{rem}
Binyamini and Novikov have shown that in Theorem \ref{thm:ayg}, the complexity of the parametrizing maps can be taken to be polynomial in $r$. This is essentially due to the analytic nature of the semialgebraic structure. In the general sharp case, we are unable obtain polynomial bounds in $r$. However, in \cite{Wilkie_conj_Proof}, Binyamini, Novikov and B.Zack prove an approximation version of the Yomdin-Gromov Lemma with polynomial dependence in $r$ under the assumption of sharp derivatives (see \cite{Wilkie_conj_Proof} or section 3 in \cite{sharp_intro}). While we conjecture that with sharp derivatives the dependence on $r$ in Theorems \ref{thm:mri},\ref{thm:mris} can be taken to by polynomial, we do not see a clear way to utilize sharp derivatives in this direction. 
\end{rem}
\begin{rem}[Effectivity]
If one begins with an effective \so-minimal structure in the sense of \cite[Remark 1.24]{sharp_intro}, then all of our results are effective in the same sense.
\end{rem}
\subsection{Applications of the Main Result}
\label{sec:applications}
Binyamini, Jones, Schmidt and Thomas \cite{when} prove a sharp version of the Pila-Wilkie counting theorem (see \cite{Pila-Wilkie}) for the restricted subPfaffian structure using the results of \cite{Pf} and a version of \s $S_{\ell}$ from Theorem \ref{thm:for_show} for this structure. The same argument will yield a sharp version of the Pila-Wilkie counting theorem for any \so-minimal structure with sharp cylindrical decomposition, and thus for any \so-minimal structure, see Theorem \ref{thm:sharp_pila_wilkie} below. We note that under the assumption of \emph{sharp derivatives}, a $\textnormal{polylog}$ version of the Pila-Wilkie Theorem generalizing the Wilkie conjecture, holds, see \cite[Definition 8, Theorem 1]{Wilkie_conj_Proof}.
\begin{defn}
Let $A\subset\bb{R}^{\ell}$. We denote $A^{\textnormal{alg}}$ to be the union of all connected $1$ dimensional semialgebraic subsets of $A$, and $A^{\textnormal{tran}}=A\backslash A^{\textnormal{alg}}$.
\end{defn}
\begin{defn}
For a rational number $x=\frac{p}{q}$ where $p,q$ are coprime integers, we denote $H(x):=\max\{|p|,|q|\}$, $H(x)$ is called the height of $x$. For a vector $x\in\bb{Q}^{\ell}$ we denote $H(x)$ to be the maximum among the height of $x'$s coordinates.
\end{defn}
\begin{thm}
\label{thm:sharp_pila_wilkie}
Let $(\cc{S},\Omega)$ be a sharply o-minimal structure. Let $A\subset\bb{R}^{\ell}$ be a definable set of format $\cc{F}$ and degree $D$. Then for every $\epsilon>0$ we have \begin{equation}
    |\{x\in A^{\textnormal{tran}}\cap\bb{Q}^{\ell}:\;H(x)\leq H\}|\leq\poly_{\cc{F},\epsilon}(D)H^{\epsilon}.
\end{equation}
\end{thm}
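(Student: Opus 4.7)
The plan is to follow the Pila-Wilkie proof of the counting theorem \cite{Pila-Wilkie}, with the o-minimal reparametrization (Theorem \ref{thm:oyg}) replaced by the sharp variant \s $S_{\ell}$ from Theorem \ref{thm:for_show}, and with complexity accounting throughout to ensure polynomial dependence on $D$. This is exactly the argument of Binyamini, Jones, Schmidt and Thomas in \cite{when} in the restricted subPfaffian case; once \s $S_{\ell}$ is available in the general \so-minimal (with \s CD) setting, the argument carries over unchanged. The reduction to the \s CD case is supplied by \cite[Theorem 1.9]{sharp_intro}, so we may assume \s CD throughout.

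We induct on $\mu=\dim A$. When $\mu=0$, sharp o-minimality directly gives $|A|\leq\poly_{\cc{F}}(D)$, which dominates $\poly_{\cc{F},\epsilon}(D)H^{\epsilon}$. For the inductive step, fix $\epsilon>0$ and pick $r=r(\ell,\mu,\epsilon)$ large (to be chosen below). Applying \s $S_{\ell}$ with this $r$, we obtain $N\leq\poly_{\cc{F}}(D)$ basic cells $C_{\alpha}$ together with cellular $r$-maps $\phi^{\alpha}:C_{\alpha}\to A$ of format $O_{\cc{F}}(1)$ and degree $\poly_{\cc{F}}(D)$ whose images cover $A$. It therefore suffices to bound the rational points of height $\leq H$ in each $\phi^{\alpha}(C_{\alpha})\cap A^{\textnormal{tran}}$ and sum.

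For each $\phi^{\alpha}$ the Bombieri-Pila determinant method (as used in \cite[\S3]{Pila-Wilkie}) places $\phi^{\alpha}(C_{\alpha})\cap\bb{Q}^{\ell}_{\leq H}$ inside a union of at most $O_{\ell,\mu,r}(1)\cdot H^{\mu/(r+1)}$ real algebraic hypersurfaces $V$ of degree $\leq d(\ell,\mu,r)$. Summing over $\alpha$ produces $\poly_{\cc{F}}(D)\cdot O_{\ell,\mu,r}(1)\cdot H^{\mu/(r+1)}$ such hypersurfaces in total, each of bounded degree. For every such $V$, the intersection $A\cap V$ is definable of format $O_{\cc{F}}(1)$ and degree $\poly_{\cc{F}}(D)$, and $(A\cap V)^{\textnormal{alg}}\subseteq A^{\textnormal{alg}}$. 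Decompose $A\cap V$ into its $\mu$-dimensional part, which is absorbed into $A^{\textnormal{alg}}$ via the block-family argument of \cite[\S4]{Pila-Wilkie} (made uniform over $V$ using \s CD), and its lower-dimensional residue, to which the inductive hypothesis applied with exponent $\epsilon/2$ yields a bound of $\poly_{\cc{F},\epsilon}(D)H^{\epsilon/2}$. Choosing $r$ so that $\mu/(r+1)<\epsilon/2$, the product of the two contributions is $\poly_{\cc{F},\epsilon}(D)H^{\epsilon}$, completing the induction.

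The main obstacle is maintaining polynomial dependence on $D$ through the same-dimensional step, where one must show that the top-dimensional component of $A\cap V$ is contained in $A^{\textnormal{alg}}$ with polynomially bounded uniformity over the family of admissible hypersurfaces $V$. This is precisely where \s CD is invoked: applied to the definable family $\{A\cap V:\deg V\leq d\}$, \s CD delivers $\poly_{\cc{F}}(D)$ blocks of format $O_{\cc{F}}(1)$ and degree $\poly_{\cc{F}}(D)$, after which the standard Pila-Wilkie block argument concludes. Every other step is a routine sharp analogue of the Pila-Wilkie recursion, with the polynomial bounds in $D$ propagating because the algebraic hypersurfaces $V$ produced by the determinant method have $(r,\mu,\ell,\epsilon)$-bounded, hence $D$-independent, complexity.
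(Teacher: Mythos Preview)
Your proposal is correct and matches the paper's approach exactly: the paper does not give a detailed proof of Theorem~\ref{thm:sharp_pila_wilkie} at all, but simply observes (in Section~\ref{sec:applications}) that the argument of Binyamini--Jones--Schmidt--Thomas \cite{when} for the restricted subPfaffian case goes through verbatim once \s $S_{\ell}$ is available, and that the reduction to \so-minimal structures with \s CD is handled by \cite[Theorem~1.9]{sharp_intro}. Your sketch fleshes out precisely that argument, so there is nothing to compare---you have written more detail than the paper itself provides.
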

\subsection{Structure of this paper}
 In \cite{ALR}, standard o-minimal technique is used to conclude a family version for $F_{\ell}$ from $F_{\ell}$. This technique is not sharp. For example, the dimension of the space of all semialgebraic sets with complexity $\leq \beta$ is polynomial in $\beta$, and so we cannot \say{interact} with it in the framework of \so-minimal structures. Instead, we prove a family version for every single step, keeping track of the formats and degrees of the total spaces involved.\\ \\ In section 2, we review sharp versions of some standard o-minimal constructions. 
 \\ \\  In section 3, we formally introduce forts and study their elementary properties and constructions. We reformulate $S^{*}_{\ell},F^{*}_{\ell}$ in terms of forts, and denote them by $S_{\ell,k},F_{\ell,k}$, where $k$ is the amount of parameters in the family. \\ \\
 We prove $S_{\ell,k},F_{\ell,k}$ by induction. We first prove $F_{1,k}$ for every $k\in\bb{Z}_{\geq 0}$. We then show the steps $S_{\leq\ell,k}+F_{\leq\ell,k}\to S_{\ell+1,k}$ and $S_{\ell+1,k}+F_{\leq\ell,k+1}\to F_{\ell+1,k}$. Sections 4,5,6 are dedicated to $F_{1,k}$ and the two induction steps respectively. \\ \\ 
 Finally, we remark that our proof for Theorem \ref{thm:mris} works completely verbatim for Theorem \ref{thm:mri}, one just needs to ignore the  FD-filtration and carry through the same constructions. Therefore we will always work with sharply o-minimal structures. 
 \section{Preliminary results for Sharp Structures}
 \subsection{Definition of sharply o-minimal structures}
 We follow the definition for \so-minimal structures as it appears in \cite{sharp_intro}. 
 \begin{defn}[FD-filtrations]
 \label{defn:FD-filtration}
 Let $\mathbf{S}$ be an o-minimal expansion of the real field. An FD-filtration on $\mathbf{S}$ is a filtration $\Omega=\{\Omega_{\cc{F},D}\}_{\cc{F},D\in\bb{N}}$ on the collection of definable sets by two natural numbers,  such that the following holds.
 \begin{enumerate}
     \item Every definable set is in $\Omega_{\cc{F},D}$ for some $\cc{F},D\in\bb{N}$,
     \item For every $\cc{F},D\in\bb{N}$ we have \begin{equation}
         \Omega_{\cc{F},D}\subset \Omega_{\cc{F}+1,D}\cap \Omega_{\cc{F},D+1}.
     \end{equation}
 \end{enumerate}
 We say that a definable set has format $\cc{F}$ and degree $D$ if $X\in\Omega_{\cc{F},D}$. We say that a definable map has format $\cc{F}$ and degree $D$ if its graph has format $\cc{F}$ and degree $D$.
 \end{defn}
 \begin{defn}[Sharp o-minimality]
 \label{defn:so-minimality}
 Let $\mathbf{S}$ be an o-minimal expansion of the real field, and let $\Omega$ be an FD-filtration on $\mathbf{S}$. The pair $(\mathbf{S},\Sigma)$ is called a sharply o-minimal structure (\so-minimal for short) if the following holds. 
 \begin{enumerate}
     \item For every $\cc{F}\in\bb{N}$ there exists a polynomial $P_{\cc{F}}$ of one variable with positive coefficients such that if $X\subset\bb{R}$ has format $\cc{F}$ and degree $D$ then it has at most $P_{\cc{F}}(D)$ components.
     \item If $X\subset\bb{R}^{\ell}$ has format $\cc{F}$ and degree $D$ then the sets $\bb{R}\times X, X\times\bb{R}, \bb{R}^{\ell}\setminus X$ and $\pi_{\ell-1}(X)$ have format $\cc{F}+1$ and degree $D$. Moreover, $\cc{F}\geq\ell$ neccasarily holds.
     \item If $X_{i}\subset\bb{R}^{\ell}$ are definable sets of format $\cc{F}_{i}$ and degree $D_{i}$ for $i=1,\dots, k$ then the union $\cup X_{i}$ has format $\cc{F}$ and degree $D$, and the intersection $\cap X_{i}$ has format $\cc{F}+1$ and degree $D$, where $\cc{F}:=\underset{i}{\max}\cc{F}_{i}$ and $D:=\sum_{i}D_{i}$.
     \item If $P\in\bb{R}[x_{1},\dots,x_{\ell}]$ has degree $d$, then the zero set $\{P=0\}\subset\bb{R}^{\ell}$ has format $\ell$ and degree $d$. 
 \end{enumerate}
 \end{defn}
 Fix a sharply o-minimal structure. We will often work with families, so we introduce the following definition to make notation easier.
 \begin{defn}
 \label{defn:format_degree_of_family}
Let $F_{\lambda}=\{f_{\lambda}:X\to Y\}_{\lambda\in\Lambda}$ be a family of definable maps. We say that the family $F_{\lambda}$ is an $(\cc{F},D)$-family if the total space $F_{\Lambda}=\{(\lambda,x,y)\in\Lambda\times X\times Y:\;y=f_{\lambda}(x)\}$ has format $\cc{F}$ and degree $D$. Similarly, if $X_{\lambda}$ is a family of subsets of $\bb{R}^{\ell}$, we say it is an $(\cc{F},D)$-family if the total space $X_{\Lambda}=\{(\lambda,x):\;\lambda\in\Lambda,\;x\in X_{\lambda}\}$ has format $\cc{F}$ and degree $D$.
\end{defn}
\subsection{Sharpness of arithmetic operations}
The following basic lemmas about sharpness of arithmetic operations were not strictly treated in \cite{sharp_intro,Wilkie_conj_Proof}, even though they were used there implicitly. We prove them here, for they serve as an additional illustration of the mechanism of the \so-minimal axioms. 
\begin{lem}
If $X_{\lambda}\subset\bb{R}^{a},\;Y_{\lambda}\subset\bb{R}^{b}$ are definable $(\cc{F},D)$-families then the fiber product $X_{\lambda}\times Y_{\lambda}$ is an $(\format,\comp)$ family. 
\end{lem}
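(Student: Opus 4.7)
The plan is to realize the total space of the family $X_\lambda \times Y_\lambda$, namely
\begin{equation*}
Z_\Lambda := \{(\lambda, x, y) \in \Lambda \times \bb{R}^a \times \bb{R}^b : (\lambda, x) \in X_\Lambda,\ (\lambda, y) \in Y_\Lambda\},
\end{equation*}
as an intersection of two sets, each obtained from $X_\Lambda$ or $Y_\Lambda$ by the operations allowed in Definition \ref{defn:so-minimality}, and then invoke axiom (3) to bound the format and degree of the intersection.

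The first set, $\widetilde{X} := X_\Lambda \times \bb{R}^b \subset \Lambda \times \bb{R}^a \times \bb{R}^b$, is built from $X_\Lambda$ by applying the operation $X \mapsto X \times \bb{R}$ from axiom (2) a total of $b$ times, yielding format $\cc{F} + b$ and degree $D$. For $Y_\Lambda$, I would first cylindrify $a$ times via $X \mapsto \bb{R} \times X$ to obtain $\bb{R}^a \times Y_\Lambda \subset \bb{R}^a \times \Lambda \times \bb{R}^b$ of format $\cc{F} + a$ and degree $D$, and then permute the $\bb{R}^a$ block past the $\Lambda$ block so that the resulting set $\widetilde{Y}$ lies in the same ambient $\Lambda \times \bb{R}^a \times \bb{R}^b$ as $\widetilde{X}$. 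Once $\widetilde{Y}$ is suitably bounded, the identity $Z_\Lambda = \widetilde{X} \cap \widetilde{Y}$ together with one application of axiom (3) will conclude the proof.

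The main obstacle is that the \so-minimal axioms do not directly provide invariance under coordinate permutations, so this must be bootstrapped. I would do so via the standard \emph{graph trick}: for a coordinate permutation $\sigma$ of $\bb{R}^N$ and a set $W \subset \bb{R}^N$, the image $\sigma(W)$ is the projection onto the last $N$ coordinates of $(W \times \bb{R}^N) \cap \textnormal{Graph}(\sigma)$, and the graph itself is the intersection of the $N$ linear hyperplanes $\{y_i - x_{\sigma^{-1}(i)} = 0\} \subset \bb{R}^{2N}$, each of format $2N$ and degree $1$ by axiom (4). Axiom (3) then gives their intersection format $O(N)$ and degree $N$; intersecting with $W \times \bb{R}^N$ (of format $\cc{F} + N$, degree $D$ by $N$ applications of axiom (2)) and projecting out $N$ coordinates via axiom (2) yields $\sigma(W)$ of format $\format$ and degree $\comp$. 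Here I use that the ambient dimensions $a$, $b$, and $\dim \Lambda$ are each bounded by $\cc{F}$, thanks to the dimension clause in axiom (2), so $N = O(\cc{F})$ is absorbed into $\format$. Applying this with $W = \bb{R}^a \times Y_\Lambda$ gives $\widetilde{Y}$ with the same bounds, and axiom (3) applied to $Z_\Lambda = \widetilde{X} \cap \widetilde{Y}$ delivers format $\format$ and degree $\comp$, as required.
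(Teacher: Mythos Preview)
Your argument is correct. It differs from the paper's in how the two copies of the parameter $\lambda$ are synchronised: you permute coordinates so that $\widetilde{X}$ and $\widetilde{Y}$ both sit in $\Lambda\times\bb{R}^a\times\bb{R}^b$, and then intersect. The paper instead forms the ordinary Cartesian product $X_\Lambda\times Y_\Lambda\subset\Lambda\times\bb{R}^a\times\Lambda\times\bb{R}^b$ (using the identity $X\times Y=(X\times\bb{R}^b)\cap(\bb{R}^a\times Y)$, which requires no permutation), then intersects with the diagonal $\{\lambda=\mu\}$ and projects out the redundant copy of $\Lambda$. The paper's route is a little slicker in that the non-family product step is entirely elementary; on the other hand, it too ends with an operation not literally listed in the axioms (projecting out a middle block of coordinates), so your explicit graph-trick justification actually fills a gap that the paper leaves implicit.

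One small slip: axiom~(2) only supplies $\pi_{\ell-1}$, i.e.\ projection onto the \emph{first} coordinates. So in your graph trick you should write $\sigma(W)=\pi_N\bigl((\bb{R}^N\times W)\cap\{(y,x):y=\sigma(x)\}\bigr)$ and project onto the first $N$ coordinates, rather than onto the last $N$; otherwise you are invoking the very permutation you are trying to justify.
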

\begin{proof}
The non-family version immediately from the axioms and from the equality $X\times Y=\left(X\times\bb{R}^{b}\right)\cap\left(\bb{R}^{a}\times Y\right)$. Therefore the set \begin{equation}
    X_{\Lambda}\times Y_{\Lambda}=\{(\lambda,x,\mu,y):\lambda,\mu\in\Lambda,\;x\in X_{\lambda},\;y\in Y_{\mu}\}
\end{equation}
has format $\format$ and degree $\comp$. It is left to notice that the total space of the family $X_{\lambda}\times Y_{\lambda}$ is given by a projection of $(X_{\Lambda}\times Y_{\Lambda})\cap\{\lambda=\mu\}$. 
\end{proof}
Similarly, one can prove the following Lemma.
\begin{lem}
\label{lem:sharp_arithmetic}
 Let $f_{\lambda},g_{\lambda}:X\to Y$ and $h_{\lambda}:Y\to Z$ be definable $(\cc{F},D)$-families. Then the families $f_{\lambda}\pm g_{\lambda},h_{\lambda}\circ f_{\lambda}$ are $(\format,\comp)$-families. If $Y\subset\bb{R}$, then so is $f_{\lambda}g_{\lambda}$, and if in addition $g_{\lambda}\neq 0$, then so is $f_{\lambda}/g_{\lambda}$. 
\end{lem}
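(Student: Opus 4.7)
The plan is to mimic the proof of the preceding fiber-product lemma verbatim: each of the operations in question can be realized by intersecting the fiber product of the given total spaces with a fixed semialgebraic relation and then projecting out auxiliary coordinates. The axioms of Definition~\ref{defn:so-minimality} then control how format and degree grow through each of these steps.

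Concretely, I would fix once and for all the semialgebraic relations $\Gamma_{+},\Gamma_{-},\Gamma_{\cdot},\Gamma_{\div}\subset\bb{R}^{3}$ defined by $z=y_1\pm y_2$, $z=y_1 y_2$, and $y_2 z=y_1$ (the last one on $\bb{R}\times(\bb{R}\setminus\{0\})\times\bb{R}$). By axiom (4) each is cut out by a polynomial of bounded degree, hence has format $O(1)$ and degree $O(1)$; if $Y\subset\bb{R}^{b}$ with $b>1$, one lifts $\Gamma_{\pm}$ coordinatewise to $Y^{2}\times Y$, still of format $O(1)$ and degree $O(1)$ by axioms (2)--(3). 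For the sum $f_{\lambda}+g_{\lambda}$, I would take the fiber product of the total spaces of $f_{\lambda}$ and $g_{\lambda}$, which by the previous lemma is already an $(\format,\comp)$-family, intersect with the diagonal $\{\lambda_1=\lambda_2,\,x_1=x_2\}$ and with $\Gamma_{+}$ (in the appropriate coordinates), and then project onto $(\lambda,x,z)$. Each intersection contributes $+1$ to the format and keeps the degree polynomial in $D$ by axiom (3), and each projection contributes $+1$ to the format while preserving degree by axiom (2); since the number of such operations is a fixed constant, the resulting family is $(\format,\comp)$.

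The difference is handled identically with $\Gamma_{-}$ in place of $\Gamma_{+}$; the composition $h_\lambda\circ f_\lambda$ uses instead the matching relation $\{y=y'\}$ to identify the output of $f_{\lambda}$ with the input of $h_{\lambda}$; and the product and quotient use $\Gamma_{\cdot}$ and $\Gamma_{\div}$ respectively. For the quotient, the hypothesis $g_{\lambda}\neq 0$ merely guarantees well-definedness, and the locus $\{g_\lambda\neq 0\}$ is itself an $(\format,\comp)$-family as the complement (via axiom (2)) of the zero locus of $g_{\lambda}$. I do not expect any real obstacle here; the argument is pure bookkeeping, verifying that each of the finitely many applications of axioms (2)--(4) contributes only $O(1)$ to the format and $\poly_{\cc{F}}(D)$ to the degree.
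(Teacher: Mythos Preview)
Your proposal is correct and is exactly the argument the paper intends: the paper does not give a detailed proof of this lemma, merely stating that it follows ``similarly'' to the preceding fiber-product lemma, and your construction---fiber product of the total spaces, intersection with a fixed semialgebraic relation, then projection---is precisely that similar argument spelled out.
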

\subsection{Sharpness of Refinement}
 We will often use the following Lemmas implicitly. 
 \begin{lem}
 \label{lem:makecyl}
 \phantom{o}
 \begin{enumerate}
     \item  Let $\Phi$ be a cylindrical decomposition of $I^{\ell}$, whose cells are of format $\cc{F}$ and degree $D$, and for every cell $\C\in\Phi$ let $\Phi_{\C}$ be a cylindrical decomposition of $\C$ whose cells are of format $\cc{F}$ and degree $D$. Then there exists a cylindrical decomposition $\Psi$ of $I^{\ell}$ of size $\poly_{\cc{F}}(D,\sum_{\C\in\Phi}|\Phi_{\C}|)$ whose cells have format $O_{\cc{F}}(1)$ and degree $\comp$ that refines all the decompositions $\Phi_{\C}$. More precisely, every cell of $\Psi$ is contained in a cell of $\Phi_{\C}$ for some $\C\in\Phi$.
     \item Let $\Phi_{1},\dots,\Phi_{s}$ be cylindrical decompositions of $I^{\ell}$ whose cells have format $\cc{F}$ and degree $D$. Then there exists a cylindrical decomposition $\Psi$ of $I^{\ell}$ of size $\poly_{\cc{F}}(D,\sum_{i=1}^{s}|\Phi_{i}|)$ whose cells have format $\format$ and degree $\comp$ that refines $\Phi_1,\dots,\Phi_s$.
 \end{enumerate}
 \end{lem}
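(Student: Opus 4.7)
The plan is to reduce both parts directly to the assumed \s CD hypothesis. In each case I will gather all cells appearing in the given data as a single family of subsets of $I^{\ell}$, apply \s CD to produce a cylindrical decomposition of $I^{\ell}$ compatible with every such subset, and then observe that compatibility, together with the fact that each input decomposition is a partition, forces the refinement relation. Since \s CD already supplies both a size bound of the form $\poly_{\cc{F}}(D,\cdot)$ and cell format/degree bounds $\format$ and $\comp$, sharpness will be automatic once the counts are lined up.

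For part (2), I will list the cells of $\Phi_{1},\dots,\Phi_{s}$ as a single collection $\{\C_{i,j}\}$ of $N:=\sum_{i}|\Phi_{i}|$ subsets of $I^{\ell}$, each of format $\cc{F}$ and degree $D$. Applying \s CD to this collection yields a cylindrical decomposition $\Psi$ of $I^{\ell}$ into $\poly_{\cc{F}}(D,N)$ cells of format $\format$ and degree $\comp$, each of which is contained in or disjoint from every $\C_{i,j}$. Because the cells of $\Phi_{i}$ partition $I^{\ell}$ for each fixed $i$, every nonempty cell of $\Psi$ is contained in a unique cell of $\Phi_{i}$, which is precisely what it means for $\Psi$ to refine $\Phi_{i}$.

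For part (1), I will apply the same strategy to the aggregated list of cells of all the decompositions $\Phi_{\C}$, obtaining $N:=\sum_{\C\in\Phi}|\Phi_{\C}|$ subsets of $I^{\ell}$ of format $\cc{F}$ and degree $D$. The only extra observation needed is that compatibility of a cell $D'\in\Psi$ with every cell of a single $\Phi_{\C}$ propagates upward to compatibility with $\C$ itself, since $\C$ is the disjoint union of the cells of $\Phi_{\C}$. Hence $D'$ lies in a unique $\C\in\Phi$, and then in a unique cell of $\Phi_{\C}$, which is the refinement condition in the stronger cell-by-cell form requested.

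I do not anticipate any real obstacle here; the heart of the lemma is an invocation of \s CD, and the refinement property is extracted from compatibility by the partition argument above. The only mild bookkeeping point is checking that the additive input count $\sum_{\C}|\Phi_{\C}|$ (respectively $\sum_{i}|\Phi_{i}|$) is exactly what enters the polynomial bound from \s CD, which is immediate from the statement of that axiom.
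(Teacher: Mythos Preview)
Your proposal is correct and follows exactly the paper's approach: in each item the paper simply applies \s CD to the full collection of cells (of $\cup_{\C}\Phi_{\C}$ for (1), of $\cup_{i}\Phi_{i}$ for (2)) and leaves the refinement conclusion implicit. You have spelled out the partition-plus-compatibility argument that the paper omits, but the underlying strategy is identical.
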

 \begin{proof}
 For the first item, use \s CD on all the collection of all cells in $\cup_{\C}\Phi_{\C}$. For the second item, use \s CD on the collection of all cells in $\cup_{i}\Phi_{i}$. 
 \end{proof}
\subsection{Sharpness of \texorpdfstring{$C^{r}$}{Lg} locus}
We need the following slightly stronger formulation of \cite[Proposition 3.1]{sharp_intro}. The proof remains the same however.
\begin{prop}
\label{prop:Cr_locus_effective}
Let $f_{1},\dots,f_{s}:I^{\ell}\to I$ be definable functions of format $\cc{F}$ and degree $D$. Then $f_{1},\dots,f_{s}$ are $C^{r}$ outside a definable set $V$ of codimension $\geq 1$ of format $O_{\cc{F},r}(1)$ and degree $\poly_{\cc{F},r}(D,s)$.
\end{prop}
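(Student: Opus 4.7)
The plan is to induct on $r$, handling the multi-function case via the \so-minimal union axiom: formats are stable and degrees add, so a union of $s$ individual bad loci of degree $\rcomp$ has total degree $\poly_{\cc{F},r}(D,s)$.

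For the base case $r=0$, I would apply \s CD to the graphs $\Gamma(f_1),\ldots,\Gamma(f_s) \subset I^{\ell+1}$, producing a cylindrical decomposition of $I^{\ell+1}$ compatible with all of them of size $\poly_{\cc{F}}(D,s)$ with cells of format $\format$ and degree $\comp$. On each full-dimensional cell $C$ of the induced decomposition of $I^\ell$, every $f_i|_C$ coincides with a graph-cell over $C$, hence is continuous. The union $V_0$ of the strictly lower-dimensional cells then satisfies the required bounds and has codimension $\geq 1$.

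For the inductive step, suppose a bad locus $V_r$ has been constructed for $f_1,\ldots,f_s$. On $I^\ell \setminus V_r$ every partial derivative $\partial^\alpha f_i$ with $|\alpha| \leq r$ is well-defined. The key auxiliary fact is \emph{sharp differentiation}: for a $C^1$ definable function $g$ of format $\cc{F}'$ and degree $D'$, each partial $\partial_j g$ has format $O_{\cc{F}'}(1)$ and degree $\poly_{\cc{F}'}(D')$. This follows by writing the graph of $\partial_j g$ via the usual $\varepsilon$-$\delta$ limit formula with three $\bb{R}$-quantifiers applied to a difference-quotient expression whose complexity is controlled by Lemma \ref{lem:sharp_arithmetic}, then invoking the projection, negation, and union axioms. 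Iterating $r$ times, every order-$r$ partial has format $\rformat$ and degree $\rcomp$. Applying the base case to the $s \cdot O_r(1)$ many order-$r$ partials produces an additional bad locus $W$ on whose complement all these partials are continuous, and $V_{r+1} := V_r \cup W$ satisfies the required bounds; outside it, each $f_i$ is $C^{r+1}$.

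The main obstacle is the sharp differentiation lemma above; it is really a sharp quantifier-elimination statement for a formula of bounded depth, and this is the only point where the full strength of the \so-minimal axioms enters nontrivially. With it in hand, the remaining propagation of formats and degrees through the induction is routine, and the codimension bound is automatic since each inductive step only adjoins lower-dimensional sets.
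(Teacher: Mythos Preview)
Your overall strategy --- induct on $r$, control the format and degree of derivatives via a sharp differentiation lemma, and take unions at the end --- is the right one, and matches the shape of the argument the paper defers to in \cite{sharp_intro}. The base case $r=0$ is fine: the cylindrical decomposition of $I^{\ell+1}$ compatible with the graphs does force each $f_i$ to be continuous on every top-dimensional base cell, and the bounds propagate exactly as you say.

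The inductive step, however, has a genuine gap. You assume that outside $V_r$ each $f_i$ is $C^r$, so that all order-$r$ partials $\partial^\alpha f_i$ exist there, and then you ``apply the base case'' to these partials. But the base case only gives \emph{continuity}: it produces a set $W$ such that on $I^\ell\setminus W$ all the $\partial^\alpha f_i$ with $|\alpha|=r$ are continuous. This is already contained in the statement that $f_i\in C^r$ on $I^\ell\setminus V_r$; it does not give $C^{r+1}$. To reach $C^{r+1}$ you need the order-$(r+1)$ partials to exist and be continuous, and nothing in your argument produces them.

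What is missing is a direct treatment of the passage from $C^0$ to $C^1$: one must show that a definable function is \emph{differentiable} outside a set of codimension $\geq 1$ with the required format and degree bounds. This is not a consequence of your base case; it requires writing down the non-differentiability locus via a first-order formula (the failure of the $\varepsilon$--$\delta$ condition for the existence of $\partial_j f$), bounding its format and degree through the \so-minimal axioms, and invoking the classical o-minimal fact that this locus has positive codimension. Once you have the $r=1$ case in hand, the induction you sketch does work --- but by applying the $r=1$ case, not the $r=0$ case, to the order-$r$ partials. Your sharp differentiation lemma then controls the complexity of those partials and the argument closes.
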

 \begin{proof}
See the proof of \cite[Proposition 3.1]{sharp_intro}.
\end{proof}
 \begin{rem}
 \label{rem:format_degree_of_derivatives}
 The same proof shows that if $f$ is $C^{r}$, then for any $\alpha\in\bb{N}^{\ell}$ with $|\alpha|\leq r$ the partial derivative $\frac{\partial}{\partial x_{\alpha}}f$ has format $O_{\cc{F},|\alpha|}(1)$ and degree $\poly_{\cc{F},|\alpha|}(D)$. We will use this fact without referring to this remark.  
 \end{rem}
\subsection{Sharp definable choice}
 \begin{prop}
 \label{prop:Effective_definable_choice}
 Let $\{X_{y}\subset\bb{R}^{\ell}\}_{y\in Y}$ be an $(\cc{F},D)$-family of non empty definable sets. Then there exists a definable map $g:Y\to\bb{R}^{\ell}$ of format $\format$ and degree $\comp$ such that $g(y)\in X_{y}$ for all $y\in Y$.
\end{prop}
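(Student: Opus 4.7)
The plan is to mimic the standard o-minimal proof of definable choice, proceeding by induction on $\ell$ while tracking the format and degree at each step. The base case $\ell=1$ will be the heart of the argument; the inductive step is just a projection-and-fiber maneuver.

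For the base case, fix $y \in Y$ and consider $X_y \subset \bb{R}$, which by axiom (1) of Definition \ref{defn:so-minimality} has at most $P_{\cc{F}}(D)$ connected components. I would define $g(y)$ to be a canonical representative of the leftmost component: if this component is a single point, take it; if it is a bounded interval, take its midpoint; if it is unbounded to the left, take one less than its right endpoint. Each branch corresponds to a piece of a definable partition of $Y$, and on each piece the graph of $g$ inside $Y \times \bb{R}$ can be constructed from the total space $X_{\Lambda}$ using only the primitive operations (products, projections, complements, intersections) and simple algebraic combinations, all of which preserve sharpness by axioms (2)--(3) of Definition \ref{defn:so-minimality} together with Lemma \ref{lem:sharp_arithmetic}. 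A careful bookkeeping then yields format $\format$ and degree $\comp$ for the graph of $g$.

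For the inductive step, let $\pi_{\ell-1}: \bb{R}^{\ell} \to \bb{R}^{\ell-1}$ denote the projection to the first $\ell-1$ coordinates. The family $\pi_{\ell-1}(X_y)$ has total space equal to a projection of $X_{\Lambda}$, so it is an $(\cc{F}+1, D)$-family of non-empty definable subsets of $\bb{R}^{\ell-1}$. By induction, there exists a choice map $h: Y \to \bb{R}^{\ell-1}$ of format $\format$ and degree $\comp$ with $h(y) \in \pi_{\ell-1}(X_y)$. The fiber family $Z_y := X_y \cap \pi_{\ell-1}^{-1}(h(y)) \subset \bb{R}$ is then an $(\format,\comp)$-family of non-empty subsets of $\bb{R}$, and applying the base case produces $k: Y \to \bb{R}$ with $k(y) \in Z_y$ of the same complexity. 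Setting $g(y) = (h(y), k(y))$ gives the desired choice map.

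The main obstacle is the base case: one must verify that each auxiliary notion used in selecting the canonical point, namely being the leftmost connected component, being a point versus a bounded interval versus unbounded, the midpoint, endpoints, and so on, can be expressed by finitely many applications of the primitive \so-minimal operations, with no exponential blowup in the degree. This is routine but tedious, and it is precisely where the polynomial dependence $\poly_{\cc{F}}(D)$ is pinned down. Once the base case is settled, the inductive step is immediate by the axioms and Lemma \ref{lem:sharp_arithmetic}.
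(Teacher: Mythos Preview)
Your sketch is correct and follows the standard o-minimal definable-choice argument with format and degree bookkeeping layered on top; the paper itself does not prove this proposition but simply defers to \cite[Section~4]{sharp_intro}, where precisely this argument is carried out. One cosmetic omission in your base case: you should also cover the possibility that the leftmost component is unbounded on both sides (take $0$, say), but this does not affect the argument.
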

\begin{proof}
See \cite[Section 4]{sharp_intro}.
\end{proof}
\begin{rem}
The following is an equivalent formulation: Let $f:X\to Y$ be a surjective definable function of format $\cc{F}$ and degree $D$. Then there exists a definable section $g:Y\to X$ of $f$ that has format $\format$ and degree $\comp$.
\end{rem}
\section{Forts}
\subsection{Forts and Morphisms}
 Let $q^{m}_{i}:I^{m}\to I^{m-i}$ be the projection on the last $m-i$ coordinates. $m$ will often be omitted.
 
 For a set $X\subset\bb{R}^{\ell}$ and $x\in\pi_{i}(X)$ where $1\leq i\leq\ell$, we denote $X_{x}:=q_{i}(\pi_{i}|_{X}^{-1}(x))$. For a point $x\in\bb{R}^{\ell}$ and an integer $1\leq i\leq \ell$, we denote $x_{1\dots i}:=(x_{1},\dots,x_{i})$.
 \begin{defn}
An integer cell of length $1$ is either a point $\{k\}$ or an interval $(k,k+1)$ where $k\in\bb{Z}$. An integer cell of length $\ell$ is a product of $\ell$ integer cells of length $1$.
\end{defn}
\begin{rem}
From now on the notation $\C$ is reserved for integer cells, and not general cells as in Definition \ref{defn:cells}.
\end{rem}
 There are two useful inductive definitions of forts. Definition \ref{defn:fortsx} defines forts via their structure along the $x_{1}$ axis, while Definition \ref{defn:fortsi} is similar to the inductive definition of cells in o-minimal geometry. We prove in Proposition \ref{prop:fortsi_vs_fortsx} below that these two definitions are equivalent. 
 \begin{defn}
\label{defn:fortsx}
A fort $\F$ of length $1$ is an interval $(0,n)$ for a positive integer $n$. It is naturally partitioned into integer cells, and we define $C(\F)$ to be the set of these integer cells. We denote by $\textnormal{Forts}(1)$ the set of forts of length $1$. \\ \\ 
Let $\F^{1}$ be a fort of length $1$, and let $s:C(\F^{1})\to \textnormal{Forts}(\ell-1)$ be any function. A fort of length $\ell$ is the set
\begin{equation}
    \F^{1}\ltimes s:=\bigcup_{\C\in C(\F^{1})}\C\times s(\C).
\end{equation}
The fort $\F^{1}\ltimes s$ is naturally partitioned into integer cells $\C\times\C'$ where $\C\in C(\F^{1})$ and $\C'\in C(s(\C))$. We denote by $C(\F^{1}\ltimes s)$ the set of these cells. Finally, denote by $\textnormal{Forts}(\ell)$ the set of forts of length $\ell$. 
\end{defn}
\begin{rem}
We will often write $\F$ with an upper index to indicate its length. i.e. if not stated otherwise $\F^{\ell}$ means that $\F$ is a fort of length $\ell$.
\end{rem}
\begin{defn}
\label{defn:fortsi}
Let $\F^{\ell-1}$ be a fort, and $\varphi:\F^{\ell-1}\to\bb{N}$ a function constant on each cell $\C\in C(\F^{\ell-1})$. Then the set \begin{equation}
    \F^{\ell-1}\odot\varphi:=\{(x_1,\dots,x_{\ell}):\;(x_1,\dots,x_{\ell-1})\in\F^{\ell-1},\;0<x_{\ell}<\varphi(x_1,\dots,x_{\ell-1})\}
\end{equation}
is a fort of length $\ell$. 
\end{defn}
\begin{ex}
\label{ex:fort_proj}
Let $\F^{\ell}$ be a fort. Then for any $1\leq i<\ell$, the set $\pi_{i}(\F^{\ell})$ is a fort. We will also say that $\F^{\ell}$ extends $\pi_{i}(\F^{\ell}).$
\end{ex}
\begin{ex} Let $\F^{\ell}$ be a fort. Then for any $1\leq i\leq\ell$ and every $(x_1,\dots,x_i)\in\pi_{i}(\F^{\ell})$, the set $\F^{\ell}_{(x_1,\dots,x_i)}$ is a fort.
\end{ex}
\begin{prop}
\label{prop:fortsi_vs_fortsx}
Definition \ref{defn:fortsi} is equivalent to Definition \ref{defn:fortsx}.
\end{prop}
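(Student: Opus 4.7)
The plan is induction on $\ell$. The base case $\ell=1$ is immediate, since both definitions reduce to intervals $(0,n)$ with $n\in\bb{Z}_{>0}$. Assume the equivalence is established for all lengths strictly less than $\ell$, so that the same subsets of $\bb{R}^{k}$ qualify as forts of length $k$ under either definition for $k<\ell$.

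For the direction Definition \ref{defn:fortsx} $\Rightarrow$ Definition \ref{defn:fortsi}: given $\F=\F^{1}\ltimes s$, I apply the inductive hypothesis to each fiber $s(\C)\in\textnormal{Forts}(\ell-1)$ to write $s(\C)=\G_{\C}\odot\varphi_{\C}$ for some fort $\G_{\C}$ of length $\ell-2$ and step function $\varphi_{\C}:\G_{\C}\to\bb{N}$ (for $\ell=2$ this just records $s(\C)=(0,n_{\C})$ as an interval, with $\varphi_{\C}\equiv n_{\C}$ over a singleton base). Set $\G:=\F^{1}\ltimes s''$ with $s''(\C):=\G_{\C}$; this is a fort of length $\ell-1$ by Definition \ref{defn:fortsx} and, by the inductive hypothesis, also one in the sense of Definition \ref{defn:fortsi}. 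Define $\varphi:\G\to\bb{N}$ by $\varphi(x_{1},\dots,x_{\ell-1}):=\varphi_{\C}(x_{2},\dots,x_{\ell-1})$, where $\C\in C(\F^{1})$ is the unique cell containing $x_{1}$. Since the integer cells of $\G$ are exactly the products $\C\times\C''$ with $\C''\in C(\G_{\C})$, on every such cell $\varphi$ restricts to $\varphi_{\C}|_{\C''}$, which is a constant positive integer, so $\varphi$ is a legitimate step function. A direct unpacking of the set-builder notation then gives $\F=\G\odot\varphi$.

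For the converse direction: given $\F=\F^{\ell-1}\odot\varphi$, I write $\F^{\ell-1}=\F^{1}\ltimes s$ by the inductive hypothesis. The key observation is that the integer cells of $\F^{1}\ltimes s$ are exactly the products $\C\times\C'$ with $\C\in C(\F^{1})$ and $\C'\in C(s(\C))$. Therefore the constancy of $\varphi$ on each such cell implies that along every slab $\C\times s(\C)$ the value of $\varphi$ is independent of $x_{1}$, and descends to a well-defined function $\varphi_{\C}:s(\C)\to\bb{N}$ that is constant on every $\C'\in C(s(\C))$. Setting $s'(\C):=s(\C)\odot\varphi_{\C}$, an i-fort of length $\ell-1$ and hence an x-fort by the inductive hypothesis, a direct check gives $\F=\F^{1}\ltimes s'$.

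The main thing requiring care — and essentially the only substantive work in the argument — is to track the integer-cell decomposition $C(\F)$ in parallel with the set-theoretic induction, so that in both directions the step-function hypothesis of Definition \ref{defn:fortsi} aligns exactly with the cylinder-over-cells structure of Definition \ref{defn:fortsx}. Once this is pinned down, both inclusions are obtained by applying the inductive hypothesis to smaller forts and unraveling the set-builder definitions; no deeper obstacle is anticipated.
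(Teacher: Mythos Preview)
Your proof is correct and takes essentially the same approach as the paper: induction on $\ell$, applying the inductive hypothesis to the length-$(\ell-1)$ fibers $s(\C)$ in one direction and to the length-$(\ell-1)$ base $\F^{\ell-1}$ in the other, then assembling the step function $\varphi$ from the fiberwise data $\varphi_{\C}$. The paper's version differs only in notation (writing $s'(\C)=\pi_{\ell-2}(s(\C))$ for your $\G_{\C}$) and in treating $\ell=1,2$ together as base cases.
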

\begin{proof}
First note that it is straightforward to verify Example \ref{ex:fort_proj} with both definitions. We argue by induction on $\ell$, the cases $\ell=1,2$ being clear. Let $\F^{\ell}$ be a fort in the sense of Definition \ref{defn:fortsx}, and write $\F^{\ell}=\F^{1}\ltimes s$. For every cell $\C\in C(\F^{1})$, denote by $s'(\C):=\pi_{\ell-2}(s(\C))$. By induction, there exists a function $\varphi_{\C}:s'(\C)\to\bb{N}$, constant on the cells of $s'(\C)$, such that $s'(\C)\odot\varphi_{\C}=s(\C)$. We extend $\varphi_{\C}$ to $\C\times s'(\C)$ by making it not depend on $x_{1}\in\C$, and keep the same notation $\varphi_{\C}$.

Next, note that $\F^{\ell-1}:=\pi_{\ell-1}(\F^{\ell})=\F^{1}\ltimes s'$, so $\varphi:=\bigcup_{\C\in C(\F^{1})}\varphi_{\C}$ is a function $\F^{\ell-1}\to\bb{N}$ which is constant on the cells of $\F^{\ell-1}$. It remains to show that $\F^{\ell}=\F^{\ell-1}\odot\varphi$.

Fix a cell in $\F^{\ell}$, it is of the form $\C'=\C\times\C''$ where $\C\in C(\F^{1})$ and $\C''\in C(s(\C))$. Since $s'(\C)\odot\varphi_{\C}=s(\C)$, the value of $\varphi_{\C}$ on $\pi_{\ell-2}(\C'')$ is not smaller than $\sup q_{\ell-2}(\C'')$. By the definition of $\varphi$, this means that this cell is also a cell of $\F^{\ell-1}\odot\varphi$. The same argument shows that every cell of $\F^{\ell-1}\odot\varphi$ is a cell of $\F^{\ell}$.

The proof that Definition \ref{defn:fortsi} implies Definition \ref{defn:fortsx} is completely analogous. 
\end{proof}
We next equip $\textnormal{Forts}(\ell)$ with morphisms, making it a category. 
\begin{defn}
\label{defn:fortmor}
Let $\F_{1},\F_{2}\in \textnormal{Forts}(\ell)$. A definable map $\phi:\F_{1}\to\F_{2}$ is called a morphism if:
\begin{enumerate}
    \item $\phi$ is surjective
    \item $\phi$ is precellular. 
    \item For every $\C_{1}\in\C(\F_{1})$ there exists $\C_2\in C(\F_{2})$ such that $\phi(\C_1)\subset\phi(C_2)$, and moreover the restriction $\Phi|_{\C_1}$ is continuous.
\end{enumerate}
\end{defn}
\begin{ex}
Let $\F$ be a fort. Then the set $d\F=\{d\cdot x,\;x\in\F\}$ for $d\in\bb{N}$ is also a fort, and the map $d\F\to\F$ given by $x\mapsto d^{-1}x$ is a morphism called linear subdivision of order $d$.
\end{ex}
\begin{ex}
\label{ex:base_morphism}
Let $\phi:\F^{\ell}_{1}\to\F^{\ell}_{2}$ be a morphism. Then for any $1\leq i<\ell$, $\phi_{1\dots i}:=(\phi_1,\dots,\phi_{i})$ is a morphism $\pi_{i}(\F^{\ell}_{1})\to\pi_{i}(\F^{\ell}_{2})$.  
\end{ex}
\begin{ex}
\label{ex:fibre_morphism}
Let $\phi:\F^{\ell}\to\mathscr{G}^{\ell}$ be a morphism. Then for any $1\leq i\leq\ell$, and every $(x_1,\dots,x_i)\in\pi_i(\F^{\ell})$, the map $\phi_{(x_1,\dots,x_{i})}:=\phi(x_1,\dots,x_i,\cdot,\dots,\cdot)$ is a morphism $\F^{\ell}_{(x_1,\dots,x_i)}\to\mathscr{G}^{\ell}_{\phi_{1\dots i}(x_1,\dots,x_i)}$.
\end{ex}
The following is an important example of morphisms. They are constructed using the canonical coordinate-wise affine parmaterization of o-minimal cells.
\begin{defn}
Let $X\subset\bb{R}^{m},Y\subset\bb{R}^{n}$ be definable sets. A map $f:X\to Y$ is called \emph{affine} if $f$ is a restriction of an affine map $\bb{R}^{m}\to\bb{R}^{n}$, i.e if $f=Ax+b$ for $A\in M_{n\times m},b\in\bb{R}^{n}$.
\end{defn}
\begin{defn}
Let $\C$ be an integer cell of length $\ell$. A cellular map $\phi:\C\to\bb{R}^{\ell}$ is called natural if for every $1\leq i\leq\ell$ and every $(x_1,\dots,x_{i-1})\in\pi_{i-1}(\C)$, the function $\phi_{i}(x_1,\dots,x_{i-1},\cdot)$ is affine. 
\end{defn}
The following Lemma is straightforward.
\begin{lem}
Let $C$ be a cell (not necessarily integer) of length $\ell$, and let $\C$ be any integer cell with the same type as $C$. Then there exists a unique surjective natural cellular map $A^{\C,C}:\C\to C$. If $C$ is definable in a sharply o-minimal structure and has format $\cc{F}$ and degree $D$, then $A^{\C,C}$ has format $O_{F}(1)$ and degree $\comp$.
\end{lem}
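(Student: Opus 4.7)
The plan is to proceed by induction on $\ell$, using the inductive definition of cells (Definition \ref{defn:cells}).

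For the base case $\ell=1$, the cell $C$ is either a point $\{c\}$ (type $(0)$) or an open interval $(a,b)$ (type $(1)$), and the integer cell $\C$ of matching type is either $\{k\}$ or $(k,k+1)$. In the point case the only map is $k\mapsto c$; in the interval case the requirement that $A^{\C,C}$ be surjective, strictly increasing, and affine on $(k,k+1)$ forces $A^{\C,C}(x)=a+(x-k)(b-a)$, and this is clearly cellular and natural. Existence and uniqueness are immediate, and in the sharp setting the endpoints $a=\inf C$, $b=\sup C$ are extracted from $C$ via one projection and boolean operations, hence have format $\format$ and degree $\comp$, so Lemma \ref{lem:sharp_arithmetic} gives the same bounds for $A^{\C,C}$.

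For the inductive step, decompose $C=C'\odot f$ or $C=C'\odot(f,g)$ according to the last coordinate of the type, where $C'$ is a cell of length $\ell-1$, and correspondingly write $\C=\C'\times P$ with $P$ a matching integer cell in $\bb{R}$. By the inductive hypothesis there is a unique natural cellular map $A':=A^{\C',C'}:\C'\to C'$, and in the sharp setting it has format $\format$ and degree $\comp$. Define
\begin{equation*}
A^{\C,C}(x_1,\dots,x_\ell)=\bigl(A'(x_1,\dots,x_{\ell-1}),\,A_\ell(x_1,\dots,x_\ell)\bigr),
\end{equation*}
where, writing $y=A'(x_1,\dots,x_{\ell-1})$, we set $A_\ell=f(y)$ in the $(\tau,0)$ case, and $A_\ell=f(y)+(x_\ell-k)(g(y)-f(y))$ in the $(\tau,1)$ case. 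By construction $A^{\C,C}$ is triangular, strictly increasing in each coordinate on intervals, continuous, surjective onto $C$, and naturally affine in $x_\ell$ for fixed $x_{1},\dots,x_{\ell-1}$. Uniqueness reduces fibrewise to the $\ell=1$ case: on each fibre $\pi_{\ell-1}^{-1}(x_1,\dots,x_{\ell-1})\cap\C$ the last coordinate must be the unique natural cellular map onto the fibre $C_{y}$, and the first $\ell-1$ coordinates are forced by the uniqueness of $A'$ combined with Remark \ref{rem:autunicell}.

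For the sharp bounds in the inductive step, the functions $f,g$ bounding $C$ along the last coordinate are definable from $C$ by a single projection (taking infima/suprema of fibres), and thus inherit format $\format$ and degree $\comp$. Composing with $A'$ and applying the arithmetic operations prescribed by the formula for $A_\ell$ preserves this complexity class by Lemma \ref{lem:sharp_arithmetic}, so $A^{\C,C}$ has format $\format$ and degree $\comp$ as required. The only subtlety I anticipate is the bookkeeping that extracting $f,g$ from $C$ really is sharp — but this is precisely one quantifier alternation controlled by the axioms of Definition \ref{defn:so-minimality}, so no serious obstacle arises.
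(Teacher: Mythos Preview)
Your proof is correct and is exactly the natural way to unpack this: the paper itself declares the lemma ``straightforward'' and does not supply a proof. Your induction on $\ell$, with the fibrewise affine formula $A_\ell=f(y)+(x_\ell-k)(g(y)-f(y))$ and the sharp bookkeeping via Lemma~\ref{lem:sharp_arithmetic} and the $\so$-minimal axioms, is precisely the argument the authors are suppressing.
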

\begin{defn}
A morphism $\phi:\F^{\ell}_{1}\to\F^{\ell}_{2}$ is called natural if for every $\C_{1}\in C(\F^{\ell}_{1})$, the restriction $\phi|_{\C_{1}}$ is natural.
\end{defn}
Given a cylindrical decomposition $\Phi$ of $I^{\ell}$, there exists a unique pair $(\F^{\ell},\phi)$ where $\phi:\F^{\ell}\to I^{\ell}$ is a natural morphism such that for every $\C\in C(\F^{\ell})$, the restriction $\phi|_{\C}$ is a homeomorphism between $\C$ and a cell of $\Phi$. The fort $\F^{\ell}$ will be called the type of $\Phi$. We will now decribe a series of bijections illustrating the equivalence of morphisms and cylindrical parametrizations.
\begin{enumerate}
    \item Cylindrical decompositions of $I^{\ell}$ of type $\F^{\ell}$ are in $1-1$ correspondence with natural morphisms $\F^{\ell}\to I^{\ell}$.
    \item Cylindrical parametrizations of cylindrical decompositions of $I^{\ell}$ of type $\F^{\ell}$ are in $1-1$ correspondence with morphisms $\F^{\ell}\to I^{\ell}$.
    \item Given a cylindrical decomposition $\Phi$ which corresponds to a natural morphism $\phi:\F^{\ell}\to I^{\ell}$, cylindrical parametrizations of refinements $\Phi'$ of $\Phi$ of type $\G^{\ell}$ are in $1-1$ correspondence with commutative triangles of morphisms of the kind \begin{equation*}
        \begin{tikzcd}
            \F^{\ell} \arrow[r,"\phi"] & I^{\ell} \\ 
            \G^{\ell} \arrow[u] \arrow[ur]
        \end{tikzcd}.
    \end{equation*}
\end{enumerate}
In particular, this means that the ordinary cylindrical decomposition theorem from o-minimality can be reformulated in terms of forts and natural morphisms. We will use the following  somewhat stronger formulation. 
\begin{defn}
Let $\F$ be a fort, and let $\{X_{\C,j}\}_{\C\in C(\F),\;j\in J}$ be a collection of definable subsets of $\F$ such that $X_{\C,j}\subset\C$. We say that a morphism $\phi:\widetilde{\F}\to\F$ is compatible with $\{X_{\C,j}\}_{\C\in C(\F),\;j\in J}$ if $\phi(\widetilde{\C})$ is compatible with every $X_{\C,j}$.
\end{defn}
\begin{prop}.
\label{prop:cyl_dec_forts}
Let $\F$ be a fort, and let $\{X_{\C,j}\}_{\C\in C(\F),\;j\in J}$ be a collection of definable subsets of $\F$ such that $X_{\C,j}\subset\C$. Then there exists a natural morphism $\phi:{\widetilde{\F}}\to\F$ compatible with $\{X_{\C,j}\}_{\C\in C(\F),\;j\in J}$. 

If in addition the structure is \so-minimal, and the sets $X_{C,j}$ have format $\cc{F}$ and degree $D$, then $\widetilde{\F}$ can be taken to have size $\poly_{\cc{F}}(D,|C(\F)|,|J|)$ and the morphism $\phi$ can be chosen so that for every $\C\in C(\F)$ the restriction $\phi|_{\C}$ has format $\format$ and degree $\comp$.
\end{prop}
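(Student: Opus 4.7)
The plan is to reduce the proposition to the \s CD axiom applied to an appropriate family of definable sets, then translate the resulting cylindrical decomposition of $\F$ into a fort and a natural morphism using the correspondence recalled just before the statement.

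After rescaling so that $\F \subset I^\ell$ (which affects format by at most $\format$ and degree by at most $\comp$), I would apply \s CD to the collection
\[
\cc{S} := \{X_{\C,j} : \C \in C(\F),\; j \in J\} \cup \{\C : \C \in C(\F)\}.
\]
Each set in $\cc{S}$ has format $\format$ and degree $\comp$, and $|\cc{S}| \leq |C(\F)|(|J|+1)$. The axiom then produces a cylindrical decomposition $\Psi$ of $I^\ell$ of size $\poly_{\cc{F}}(D, |C(\F)|, |J|)$ into cells of format $\format$ and degree $\comp$, each compatible with every element of $\cc{S}$.

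I would then extract $\Psi_{\F} := \{C \in \Psi : C \subset \F\}$. Because $\Psi$ is compatible with every $\C \in C(\F)$, every cell of $\Psi$ lies inside some unique $\C \in C(\F)$ or is disjoint from $\F$; hence $\Psi_{\F}$ partitions $\F$, and every cell of $\Psi_{\F}$ is compatible with every $X_{\C,j}$. A short argument shows that $\Psi_{\F}$ is in fact a cylindrical decomposition of $\F$: for each $i$, the projection $\pi_{i}(\Psi)$ remains compatible with $\pi_{i}(\C)$ (a point in $\pi_{i}(\C)$ lies in the image of some cell of $\Psi$ contained in $\C$, by compatibility of $\Psi$ with $\C$), so the cells of $\pi_{i}(\Psi)$ contained in $\pi_{i}(\F)$ form a cylindrical decomposition of $\pi_{i}(\F)$.

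Finally I would repackage $\Psi_{\F}$ as a fort $\widetilde{\F}$ equipped with a natural morphism $\phi : \widetilde{\F} \to \F$, via the obvious extension to decompositions of an arbitrary fort of the bijection between cylindrical decompositions and natural morphisms recalled in the text. Inductively on the length, each cell $C \in \Psi_{\F}$ is assigned a fresh integer cell $\widetilde{\C} \in C(\widetilde{\F})$ of the same type, and $\phi|_{\widetilde{\C}}$ is defined to be the unique surjective natural cellular map $A^{\widetilde{\C}, C} : \widetilde{\C} \to C$ from the Lemma on natural cellular maps, which has format $\format$ and degree $\comp$ since $C$ does. The size of $\widetilde{\F}$ is $|\Psi_{\F}| \leq |\Psi| = \poly_{\cc{F}}(D, |C(\F)|, |J|)$, matching the claim.

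The main obstacle is verifying that $\Psi_{\F}$ is cylindrical (and not merely cellular) and that its combinatorial structure assembles coherently into a fort. Once this is in hand, the conversion to forts and the format/degree bookkeeping are routine. The non-sharp statement is obtained by the same argument, substituting ordinary cylindrical decomposition for \s CD and discarding all format/degree tracking.
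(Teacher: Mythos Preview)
Your proposal is correct and follows essentially the same route as the paper. The only difference is packaging: the paper embeds $\F$ in the cube $\fort{D}=(0,d)^{\ell}$ (equivalent to your rescaling), applies \s CD to the $X_{\C,j}$ together with the integer cells of $\fort{D}$ to obtain a natural morphism $\F'\to\fort{D}$, and then invokes the inverse image fort lemma (proved immediately before this proposition) to pass to $\widetilde{\F}:=\phi^{-1}(\F)$; that lemma is exactly what disposes of the ``main obstacle'' you flag, so your direct repackaging step amounts to reproving it in situ.
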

We postpone the proof of Proposition \ref{prop:cyl_dec_forts} until we develop the theory of forts further.
\subsection{Combinatorial Equivalence}
Here is a general useful lemma about morphisms.
\begin{lem}
Let $\phi:\F^{\ell}_{1}\to\F^{\ell}_{2}$ be a morphism. Then $\phi$ is one-to-one, and moreover for every $1\leq i\leq\ell$ and every $(x_1,\dots,x_{i-1})\in\pi_{i-1}(\F^{\ell}_{1})$ the function $\phi_i(x_1,\dots,x_{i-1},\cdot)$ is continuous. 
\end{lem}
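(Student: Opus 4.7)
The plan is to separate the two assertions: injectivity follows directly from the precellular axioms, while fibrewise continuity reduces to the case $\ell=1$ via Examples \ref{ex:base_morphism} and \ref{ex:fibre_morphism}, after which it becomes a short surjectivity-versus-monotonicity argument.

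For injectivity, suppose $\phi(x)=\phi(y)$. From $\phi_{1}(x_{1})=\phi_{1}(y_{1})$ and the strict monotonicity of $\phi_{1}$ given by condition (2) of precellularity, we deduce $x_{1}=y_{1}$. Proceeding by induction, if $x_{j}=y_{j}$ for $j<i$, then the $i$-th coordinates satisfy $\phi_{i}(x_{1},\dots,x_{i-1},x_{i})=\phi_{i}(x_{1},\dots,x_{i-1},y_{i})$, and strict monotonicity of $\phi_{i}(x_{1},\dots,x_{i-1},\cdot)$ forces $x_{i}=y_{i}$.

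For the continuity claim, fix $1\leq i\leq\ell$ and $(x_{1},\dots,x_{i-1})\in\pi_{i-1}(\F^{\ell}_{1})$. By Example \ref{ex:base_morphism}, the map $\phi_{1\dots i}:\pi_{i}(\F^{\ell}_{1})\to\pi_{i}(\F^{\ell}_{2})$ is a morphism of length-$i$ forts. Applying Example \ref{ex:fibre_morphism} to this morphism with the prefix $(x_{1},\dots,x_{i-1})$, the induced map $\phi_{1\dots i}(x_{1},\dots,x_{i-1},\cdot)$ is a morphism between the length-$1$ fibre forts, and its single nontrivial coordinate is exactly $\phi_{i}(x_{1},\dots,x_{i-1},\cdot)$. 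It therefore suffices to prove that every morphism $\psi:(0,n)\to(0,m)$ between length-$1$ forts is continuous.

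Such a $\psi$ is strictly increasing, surjective onto $(0,m)$, and continuous on each integer cell of $(0,n)$, so the only possible discontinuities lie at the integer points $k\in\{1,\dots,n-1\}$. Monotonicity guarantees that the one-sided limits $\psi(k^{-}),\psi(k^{+})$ exist and satisfy $\psi(k^{-})\leq\psi(k)\leq\psi(k^{+})$. If $\psi(k^{-})<\psi(k)$, the interval $(\psi(k^{-}),\psi(k))$ is a nonempty open subinterval of $(0,m)$ (since $\psi(k)\in(0,m)$ and $\psi(k^{-})\geq 0$), but no $x\in(0,n)$ maps into it: for $x<k$ we have $\psi(x)\leq\psi(k^{-})$ and for $x>k$ strict monotonicity gives $\psi(x)>\psi(k)$, contradicting surjectivity. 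The case $\psi(k)<\psi(k^{+})$ is symmetric. The one subtle point in the whole argument is precisely this step: the morphism axioms only supply continuity on each integer cell, so continuity across cell boundaries must be extracted from surjectivity combined with strict monotonicity, and the reduction to length $1$ is exactly what isolates this observation cleanly.
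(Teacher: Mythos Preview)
Your proof is correct and follows essentially the same route as the paper: reduce the continuity claim to the length-$1$ case via Examples \ref{ex:base_morphism} and \ref{ex:fibre_morphism}, and deduce injectivity from the coordinatewise strict monotonicity of a precellular map. The paper organises this as a single induction on $\ell$ and dismisses the $\ell=1$ case with the phrase ``a strictly monotone increasing bijection between two intervals is clearly continuous'', whereas you spell out the surjectivity-versus-jump argument at the integer points; this is the only substantive difference, and it is purely expository.
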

\begin{proof}
We prove it by induction on $\ell$. For $\ell=1$, since $\phi$ is a strictly monotone increasing bijection between two intervals, it is clearly one-to-one and continuous. \\ 

Suppose the claim is true for $\ell-1$. Fix 
$(x_1,\dots,x_{\ell-1})\in\pi_{\ell-1}(\F^{\ell}_{1})$.The function $\phi_{\ell}(x_1,\dots,x_{\ell-1},\cdot)$ is continuous by Example \ref{ex:fibre_morphism} and the $\ell=1$ case. If $\phi(x)=\phi(x')$, we know that $\pi_{\ell-1}(x)=\pi_{\ell-1}(x')$ by induction, and then $x=x'$ follows from the fact that $\phi_{\ell}(x_1,\dots,x_{\ell-1},\cdot)$ is strictly monotone. 
\end{proof}
Suppose one has a definable family $\{\phi_{\lambda}:\F\to\G\}_{\lambda\in I}$ of morphisms from the fort $\F$ to the fort $\G$. Then the map $I\times\F\to I\times\G$ defined by $(\lambda,x)\mapsto(\lambda,\phi_{\lambda}(x))$ is an onto precellular map which is not necessarily a morphism. The first obstruction is the continuity in $\lambda$ of the restrictions $\phi_{\lambda}|_{\C}$ for all cells $\C\in C(\F)$. To formulate the second obstruction, we introduce the following definition.
\begin{defn}
Let $\phi,\psi:\F\to\G$ be morphisms. We say that $\phi,\psi$ are \emph{combinatorially equivalent} if for every $\C\in C(\F)$, the images $\phi(\C),\psi(\C)$ are contained in the same cell of $\G$.
\end{defn}
\begin{ex}
Let $\F\in \textnormal{Forts}(\ell)$, then all morphisms $\F\to I^{\ell}$ are combinatorially equivalent. 
\end{ex}
\begin{ex}
Consider the following two natural morphisms $\phi,\psi:(0,3)\to(0,2)$.
\begin{equation}
    \phi=\begin{cases}
    \frac{x}{2},\;x\in(0,2],\\
     x-1,\;x\in(2,3)
    \end{cases}
    \psi=\begin{cases}
    x,\;x\in(0,1],\\
    \frac{x+1}{2},\;x\in(1,3)
    \end{cases}
\end{equation} They are not combinatorially equivalent, since $\phi(1)=\frac{1}{2}$ and so $\phi$ maps the cell $\{1\}$ into the cell $(0,1)$, but $\psi(1)=1$, and so maps the cell $\{1\}$ into itself. 

\end{ex}
\begin{prop}
\label{prop:family_of_combinatorially_equivalent_morphisms_is_morphism}
Let $\{\phi_{\lambda}:\F\to\G\}_{\lambda\in I}$ be a definable family of combinatorially equivalent morphisms. Suppose further that for every $\C\in C(\F)$, the map $(\lambda,x)\mapsto\phi_{\lambda}(x)$ is continuous on $I\times\C$. Then the map $\psi:I\times\F\to I\times\G$ defined by $(\lambda,x)\to(\lambda,\phi_{\lambda}(x))$ is a morphism.
\end{prop}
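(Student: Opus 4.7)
The plan is to verify directly the three defining properties of a morphism from Definition \ref{defn:fortmor}---surjectivity, the precellular structure, and the cell-image/continuity condition---applied to $\psi:I\times\F\to I\times\G$. I first note that $I\times\F$ and $I\times\G$ are themselves forts of length $\ell+1$ in the sense of Definition \ref{defn:fortsx}: regarding $I=(0,1)$ as a fort of length $1$ with its unique integer cell, attach $\F$ (resp.\ $\G$) via the constant function $s\equiv\F$ (resp.\ $s\equiv\G$). Consequently the integer cells of $I\times\F$ are precisely the sets $I\times\C$ for $\C\in C(\F)$, and similarly for $I\times\G$.

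Surjectivity is immediate: for any $(\lambda,y)\in I\times\G$, surjectivity of the morphism $\phi_\lambda:\F\to\G$ supplies $x\in\F$ with $\phi_\lambda(x)=y$, so $\psi(\lambda,x)=(\lambda,y)$. For the precellular property, I would list the coordinate functions of $\psi$: the first is $(\lambda,x)\mapsto\lambda$, depending only on the first coordinate and trivially strictly increasing in $\lambda$; the $(i+1)$-st is $(\lambda,x)\mapsto(\phi_\lambda)_i(x_1,\dots,x_i)$, which depends only on $\lambda,x_1,\dots,x_i$ by triangularity of $\phi_\lambda$, and for fixed $\lambda,x_1,\dots,x_{i-1}$ is strictly increasing in $x_i$ because $\phi_\lambda$ is cellular.

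The cell-image condition is where the two hypotheses are used, and is the only substantive step. Fix a cell $I\times\C$ of $I\times\F$. Combinatorial equivalence of the family $\{\phi_\lambda\}$ produces a single cell $\C'\in C(\G)$ such that $\phi_\lambda(\C)\subset\C'$ for every $\lambda\in I$; hence $\psi(I\times\C)\subset I\times\C'$, which is an integer cell of $I\times\G$. Continuity of the restriction $\psi|_{I\times\C}$ follows directly from the joint continuity hypothesis on $(\lambda,x)\mapsto\phi_\lambda(x)$ on $I\times\C$ together with continuity of the projection to $\lambda$. Definability of $\psi$ is inherited from definability of the family.

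The main obstacle here is conceptual rather than technical: one must recognize that combinatorial equivalence and joint continuity on cells are \emph{precisely} the two hypotheses needed to upgrade a pointwise family of morphisms into a single morphism. Without combinatorial equivalence the image $\psi(I\times\C)$ could straddle several cells of $\G$ as $\lambda$ varies, violating condition (3) of Definition \ref{defn:fortmor}; without the joint continuity hypothesis, the restriction $\psi|_{I\times\C}$ could be discontinuous even though each slice $\phi_\lambda|_\C$ is continuous. Once these two points are isolated, the proof requires no further machinery.
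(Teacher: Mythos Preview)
Your proof is correct and follows essentially the same approach as the paper's own proof: verify surjectivity and precellularity (which the paper dismisses with ``clearly''), then use the combinatorial equivalence hypothesis to get a single target cell $\C''\in C(\G)$ with $\psi(I\times\C)\subset I\times\C''$, and use the joint continuity hypothesis for continuity on cells. Your write-up is simply more explicit than the paper's terse version.
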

\begin{proof}
Clearly, $\psi$ is onto and precellular. By the second assumpstion, we also know that $\psi$ is continuous on the cells of $I\times\F$. Now let $\C\in C(I\times\F)$ be a cell. Then there exists $\C'\in C(\F)$ such that $\C=I\times\C'$. Since the $\phi_{\lambda}$ are combinatorially equivalent, there exists $\C''\in C(\G)$ such that $\phi_{\lambda}(\C')\subset\C''$ for all $\lambda$. Thus, $\psi$  maps $I\times\C'$ into $I\times\C''$, as needed.
\end{proof}
The following is an important structural proposition on forts and morphisms.
\begin{prop}
\label{prop:structure}
Let $\phi:\F^{\ell}\to\G^{\ell}$ be a morphism, and let $\C\in C(\pi_{i}(\F^{\ell}))$ for some $1\leq i\leq\ell$. \begin{enumerate}
    \item For every $x,x'\in\C$ we have $\F^{\ell}_{x}=\F^{\ell}_{x'}$. Thus the notation $\F^{\ell}({\C}):=\F^{\ell}_{x}$ is justified.
    \item The map $C(\F^{\ell}({\C}))\to C(\F^{\ell})$ defined by $\C'\mapsto\C\times\C'$ is a bijection between $C(\F^{\ell}({\C}))$ and the cells $\widetilde{\C'}$ of $\F^{\ell}$ satisfying $\pi_{i}(\widetilde{\C'})=\C$.
    \item For every $x,x'\in\C$ the morphisms $\phi_{x},\phi_{x'}$ are combinatorially equivalent. 
\end{enumerate}
\end{prop}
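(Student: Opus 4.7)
The plan is to iterate Definition \ref{defn:fortsx} to obtain, by an easy induction on $i$, a decomposition
\[
\F^{\ell} = \F^{i} \ltimes t, \qquad \F^{i} = \pi_{i}(\F^{\ell}),\ t : C(\F^{i}) \to \textnormal{Forts}(\ell-i),
\]
with $\F^{\ell} = \bigcup_{\C \in C(\F^{i})} \C \times t(\C)$ and with cells of $\F^{\ell}$ of the form $\C \times \C'$ for $\C \in C(\F^{i})$ and $\C' \in C(t(\C))$. The inductive step uses the identity $\pi_{i}(\F^{1} \ltimes s) = \F^{1} \ltimes s'$ where $s'(\C_{1}) := \pi_{i-1}(s(\C_{1}))$, together with the inductive hypothesis applied inside each fort $s(\C_{1})$.

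With this decomposition in hand, parts (1) and (2) are essentially immediate. For (1), fix $\C \in C(\F^{i})$ and any $x \in \C$: then by construction $\F^{\ell}_{x} = q_{i}(\pi_{i}|_{\F^{\ell}}^{-1}(x)) = t(\C)$, which is independent of the choice of $x$ in $\C$, so we may set $\F^{\ell}(\C) := t(\C)$. For (2), the cells of $\F^{\ell}$ whose $\pi_{i}$-projection equals $\C$ are by the decomposition precisely the sets $\C \times \C'$ with $\C' \in C(t(\C)) = C(\F^{\ell}(\C))$, which is the asserted bijection.

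For (3), apply Example \ref{ex:base_morphism} to obtain the morphism $\phi_{1\dots i} : \pi_{i}(\F^{\ell}) \to \pi_{i}(\G^{\ell})$. By condition (3) of Definition \ref{defn:fortmor}, $\phi_{1\dots i}$ sends the cell $\C$ into a single cell $\C_{0} \in C(\pi_{i}(\G^{\ell}))$, so $\phi_{1\dots i}(x), \phi_{1\dots i}(x') \in \C_{0}$; part (1) applied to $\G^{\ell}$ then shows that $\phi_{x}$ and $\phi_{x'}$ share the codomain $\G^{\ell}(\C_{0})$ and (by part (1) for $\F^{\ell}$) the domain $\F^{\ell}(\C)$. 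For any $\C' \in C(\F^{\ell}(\C))$, part (2) gives that $\C \times \C'$ is a single cell of $\F^{\ell}$, so $\phi(\C \times \C')$ is contained in a single cell $\widetilde{\C} \in C(\G^{\ell})$. Projecting by $\pi_{i}$ forces $\pi_{i}(\widetilde{\C}) = \C_{0}$, hence by part (2) applied to $\G^{\ell}$, $\widetilde{\C} = \C_{0} \times \widetilde{\C}'$ for some $\widetilde{\C}' \in C(\G^{\ell}(\C_{0}))$. Applying $q_{i}$ yields $\phi_{x}(\C'), \phi_{x'}(\C') \subset \widetilde{\C}'$, which is exactly the required combinatorial equivalence.

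The only step that requires any genuine care is establishing the iterated decomposition $\F^{\ell} = \F^{i} \ltimes t$ and the identification $t(\C) = \F^{\ell}_{x}$ for $x \in \C$; once this is in place, both the cell enumeration in (2) and the "common containing cell" argument in (3) are pure bookkeeping with products of integer cells.
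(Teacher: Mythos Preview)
Your proof is correct. The only organisational difference from the paper is in how you handle items (1) and (2). The paper proves (1) by a direct ascending induction on $i$: for $x,x'$ in the same cell of $\pi_{i}(\F^{\ell})$ it first uses the inductive hypothesis at level $i-1$ to get $\F^{\ell}_{x_{1\dots i-1}}=\F^{\ell}_{x'_{1\dots i-1}}$, and then applies the $i=1$ case inside that common fibre. Item (2) is then checked separately by elementary set manipulation. You instead isolate a single structural lemma --- the iterated decomposition $\F^{\ell}=\F^{i}\ltimes t$ with $t:C(\F^{i})\to\textnormal{Forts}(\ell-i)$ --- from which both (1) and (2) drop out in one line. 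This is a cleaner packaging of exactly the same inductive content; neither approach buys anything the other does not, but yours makes the common source of (1) and (2) explicit. For item (3), your argument coincides with the paper's: both reduce to the observation that $\phi(\C\times\C')$ lands in a single cell of $\G^{\ell}$, whose $\pi_{i}$-projection must be the cell $\C_{0}$ containing $\phi_{1\dots i}(\C)$, and then apply (2) on the $\G^{\ell}$ side.
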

\begin{proof} We prove the first item by ascending induction on $i$. \\
$1.$ For $i=1$, it is clear from Definition \ref{defn:fortsx}. Now suppose the claim is true for $i-1$. Let $(x_{i+1},\dots,x_{\ell})\in\F^{\ell}_{x}$. Since $x_{1\dots i-1},x^{'}_{1\dots i-1}$ are in the same cell of $\pi_{i-1}(\F^{\ell})$, by induction we have $\F^{\ell}_{x_{1\dots i-1}}=\F^{\ell}_{x'_{1\dots i-1}}$, thus $(x_{i},x_{i+1},\dots,x_{\ell})\in\F^{\ell}_{x'_{1\dots i-1}}$, or in other words $(x_{i+1},\dots,x_{\ell})\in\left(\F^{\ell}_{x'_{1\dots i-1}}\right)_{x_i}$. We claim that $x_{i},x'_{i}$ are in the same cell of $\pi_{1}\left(\F^{\ell}_{x'_{1\dots i-1}}\right)$. Indeed, since $x,x'$ are in the same cell of $\pi_{i}(\F^{\ell})$, we have that $x_{i},x'_{i}$ are bounded between the same two consecutive integers, or are both equal to an integer. This precisely means that they are in the same cell of $\pi_{1}(\F^{\ell}_{x'_{1\dots i-1}})$. So, again by the induction hypothesis, we have $\left(\F^{\ell}_{x'_{1\dots i-1}}\right)_{x_i}=\left(\F^{\ell}_{x'_{1\dots i-1}}\right)_{x'_i}$. We conclude that $(x_{i+1},\dots,x_{\ell})\in\left(\F^{\ell}_{x'_{1\dots i-1}}\right)_{x'_i}=\F^{\ell}_{x'_{1\dots i}}$. So $\F^{\ell}_{x}\subset\F^{\ell}_{x'}$, and by symmetry we even have equality. \\ \\ 
$2.$ Since the cells of $\F^{\ell}({\C})$ are disjoint, obviously this map is one to one. Let $\C'\in C(\F^{\ell}({\C}))$, clearly, $\C\times\C'$ is not disjoint from $\F^{\ell}$, and since it is an integer cell, it then follows that $\C\times\C'\in C(\F^{\ell})$, and obviously $\pi_{i}(\C\times\C')=\C$. Finally, let $\widetilde{\C'}\in\C(\F^{\ell})$ such that $\pi_{i}(\widetilde{\C'})=\C$. Then $\widetilde{\C'}=\C\times\C'$ for some integer cell $\C'$. Again, clearly $\C'$ is not disjoint from $\F^{\ell}({\C})$, and thus $\C'\in C(\F^{\ell}({\C}))$, and so the map is surjective. \\ \\ 
$3.$ Let $\C'\in C(\F^{\ell}({\C}))$, and let $\widetilde{\C''}\in C(\G^{\ell})$ be the cell that contains $\phi(\C\times\C')$. By the second item $\widetilde{\C''}$ corresponds to a cell $\C''\in C(\G^{\ell}_{\phi_{1\dots i}(x_{1\dots i})})$. We claim that $\phi_{x}(\C'),\phi_{x'}(\C')\subset\C''$. Indeed, let $y\in\C'$. Then $(x,y),(x',y)\in\C\times\C'$, and so $\phi(x,y),\phi(x',y)\in\widetilde{\C''}$. Again by the second item, this exactly means that $\phi_{x}(y),\phi_{x'}(y)\in\C''$.
\end{proof}
\subsection{Inverse image fort and proof of Proposition \ref{prop:cyl_dec_forts}}
\begin{lem}
Let $\F,\G\in \textnormal{Forts}(\ell)$, and let $\phi:\F\to\G$ be a morphism. Let $\G'\subset\G$ be a fort. Then $\F':=\phi^{-1}(\G')$ is a fort and $\phi|_{\F'}:\F'\to\G'$ is a morphism.
\end{lem}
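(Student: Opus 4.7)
The plan is to prove both assertions at once by induction on $\ell$, using the first inductive definition of forts (Definition \ref{defn:fortsx}) and the fiberwise description of morphisms supplied by Example \ref{ex:fibre_morphism} and Proposition \ref{prop:structure}.

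For the base case $\ell=1$ I would write $\G = (0,m)$, $\G' = (0,m')$ with $m' \leq m$, and $\F = (0,n)$. A morphism of $1$-forts is a strictly increasing surjection that sends each integer cell into a single integer cell of the target and is continuous there; hence $\phi^{-1}(\G')$ is a union of integer cells of $\F$, and strict monotonicity forces that union to be an initial segment, necessarily of the form $(0,n')$ for some positive integer $n' \leq n$. The restriction $\phi|_{(0,n')}$ then inherits precellularity, continuity on each cell, and surjectivity onto $\G'$.

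For the inductive step I would decompose $\F = \F^1 \ltimes s$, $\G = \G^1 \ltimes t$, $\G' = (\G')^1 \ltimes t'$. Since the fort decomposition is determined by the underlying set, the inclusion $\G' \subset \G$ forces $(\G')^1 \subset \G^1$ and $t'(\D) \subset t(\D)$ as sub-forts for every $\D \in C((\G')^1)$. Applying the base case to $\phi_1 : \F^1 \to \G^1$ (which is a morphism by Example \ref{ex:base_morphism}) produces the $1$-fort $\F'_1 := \phi_1^{-1}((\G')^1)$. Then for each $\C \in C(\F'_1)$ with $\phi_1(\C) \subset \D \in C((\G')^1)$, I would apply the induction hypothesis to the fiber morphism $\phi_x : s(\C) \to t(\D)$ (for some $x \in \C$) and the sub-fort $t'(\D) \subset t(\D)$ to obtain a fort $s'(\C) := \phi_x^{-1}(t'(\D))$, and then set $\F' := \F'_1 \ltimes s'$, which is a fort by Definition \ref{defn:fortsx}.

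The hard part will be to confirm that $s'(\C)$ is independent of the chosen $x \in \C$, so that $s'$ is a legitimate function on $C(\F'_1)$. This is exactly what Proposition \ref{prop:structure}(3) provides: for $x, x' \in \C$ the fiber morphisms $\phi_x, \phi_{x'}$ are combinatorially equivalent, so each cell of $s(\C)$ is mapped into the same cell of $t(\D)$ by both; the subcollection whose image lies in $t'(\D)$ therefore coincides, and so does its union. Once this is secured, a direct pointwise check gives $\F' = \phi^{-1}(\G')$, and the four morphism axioms for $\phi|_{\F'} : \F' \to \G'$ — surjectivity, precellularity, cell-into-cell, and continuity on each cell — fall out by restriction, using the fiberwise surjectivity and cell-to-cell property delivered by the induction hypothesis.
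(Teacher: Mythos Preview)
Your proof is correct and follows essentially the same strategy as the paper: induction on $\ell$, with Proposition~\ref{prop:structure}(3) supplying the crucial fact that the fiberwise preimage does not depend on the chosen base point. The only difference is that you decompose along the first coordinate via Definition~\ref{defn:fortsx} (so the base is length $1$ and the fibers carry the inductive hypothesis), whereas the paper decomposes along the last coordinate via Definition~\ref{defn:fortsi} (base of length $\ell-1$, fibers of length $1$); since the two definitions are shown equivalent in Proposition~\ref{prop:fortsi_vs_fortsx}, this is a cosmetic choice rather than a genuinely different route.
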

\begin{proof}
If $\F'$ is a fort, then the second part of the lemma is obvious. We prove that it is a fort by induction on $\ell$. The case $\ell=1$ is clear. By the iductive hypothesis, $\phi^{-1}_{1\dots\ell-1}(\pi_{\ell-1}(\G'))$ is a fort, and since $\phi$ is precellular, we have $\pi_{\ell-1}(\F')=\phi^{-1}_{1\dots\ell-1}(\pi_{\ell-1}(\G'))$. Let $\C$ be one of the cells of $\phi^{-1}_{1\dots\ell-1}(\pi_{\ell-1}(\G'))$, and let $\C''$ be the cell of $\pi_{\ell-1}(\G')$ that contains $\phi_{1\dots\ell-1}(\C)$. Then $\G'(\C'')$ is naturally a subfort of $\G(\C'')$, and so by the induction hypothesis for any $x\in\C$ we have that $\F'_{x}$ is a fort. Moreover, according to proposition \ref{prop:structure}, for any $x,x'\in\C$, the morphisms $\phi_{x},\phi_{x'}:\F(\C)\to\G(\C'')$ are combinatorially equivalent, which means precisely that $\F'_{x}=\F'_{x'}$ for any such $x,x'$. We've shown that $\F'$ is a fort by Definition \ref{defn:fortsi}.
\end{proof}
\begin{proof}[Proof of Proposition \ref{prop:cyl_dec_forts}]
The fort $\F$ is contained in a fort $\fort{D}$ which is a linear subdivision of $(0,1)^{\ell}$ of order $d\leq|C(\F)|$. By \s CD there exists a cylindrical decomposition of $\fort{D}$ compatible with the sets $X_{\C,j}$ and the cells $\cc{P}\in C(\fort{D})$ into $\poly_{\cc{F}}(D,|C(\fort{D})|,|\{X_{\C,j}\}|)=\poly_{\cc{F}}(D,|C(\F)|,|J|)$ cells of format $\format$ and degree $\comp$. Indeed, an integer cell of length $\ell$ always has format $O_{\cc{F}}(1)$ and degree $\poly(\ell)=\poly(\cc{F})$ which is within our scale since $\poly_{\cc{F}}(D,\cc{F})=\comp$ and moreover $|C(\fort{D})|=\poly_{\ell}(|C(\F)|)$. \\ \\ 
In other words, there exists a surjective precellular map $\phi:\F'\to\fort{D}$, where $\F'$ is a fort with $|C(\F')|=\poly_{\cc{F}}(D,|C(\F)|,|J|)$, such that for any $\C'\in C(\F')$, $\phi|_{\C'}$ is continuous and has format $\format$ and degree $\comp$. Moreover, since $\phi(\C')$ is compatible with the cells of $\fort{D}$ which are disjoint, it follows that there exists a cell of $\fort{D}$ which contains $\phi(\C')$. Thus, $\phi$ is morphism. Denote $\widetilde{\F}:=\phi^{-1}(\F)$. It is now easy to see that $\phi|_{\widetilde{\F}}:\widetilde{\F}\to\F$ satisfies the requirements of the proposition.
\end{proof}
\subsection{Pulling back along extensions}
The pullback of a fort $\F^{\ell}$ along a morphism to the fort $\pi_{\ell-1}(\F^{\ell})$ is a useful natural construction. 
\begin{defn}
Let $\phi:\F^{\ell-1}_{1}\to \F^{\ell-1}_{2}$ be a morphism, and let $\F^{\ell}_{2}$ extend $\F^{\ell-1}_{2}$. We define the fort $\phi^{*}\F^{\ell}$. Suppose $\F^{\ell}_{2}=\F^{\ell-1}_{2}\odot\varphi$ (as in Definition \ref{defn:fortsi}), then we define $\phi^{*}\F^{\ell}_{2}:=\F_{1}^{\ell-1}\odot(\varphi\circ\phi)$.
The morphism $\phi$ extends to the morphism $(\phi,Id):\phi^{*}\F^{\ell}_{2}\to\F^{\ell}_{2}$.\\ \\ If one has a morphism $\phi:\F^{k}_{1}\to\F^{k}_{2}$ and for $1\leq k<\ell$ and a fort $\F^{\ell}_{2}$ extending $\F^{k}_{2}$, then $\phi^{*}\F^{\ell}_{2}\in \textnormal{Forts}(\ell)$ is defined by induction, and $\phi$ will again extend to a morphism $\phi^{*}\F^{\ell}_{2}\to\F^{\ell}_{2}$. 
\end{defn}
We will need the following lemma, which we leave as an exercise for the reader. 
\begin{lem}
\label{lem:combintorial_equivalence_same_pullback}
Let $\phi,\psi:\fort{F}^{k}\to\fort{G}^{k}$ be combinatorially equivalent, and let $\fort{G}^{\ell}$ extend $\fort{G}^{k}$. Then $\phi^{*}\fort{G}^{\ell}=\psi^{*}\fort{G}^{\ell}$, and moreover the extensions of $\phi,\psi$ to $\phi^{*}\G^{\ell}$ are combinatorially equivalent. 
\end{lem}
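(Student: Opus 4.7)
The proof goes by induction on $\ell-k$. The base case is $\ell=k+1$, where the pullback is defined directly: writing $\G^{\ell}=\G^{k}\odot\varphi$ with $\varphi:\G^{k}\to\bb{N}$ constant on the integer cells of $\G^{k}$, one has $\phi^{*}\G^{\ell}=\F^{k}\odot(\varphi\circ\phi)$ and $\psi^{*}\G^{\ell}=\F^{k}\odot(\varphi\circ\psi)$. To see these two forts coincide, pick $\C\in C(\F^{k})$. Combinatorial equivalence supplies a cell $\C'\in C(\G^{k})$ containing both $\phi(\C)$ and $\psi(\C)$, and $\varphi$ is constant on $\C'$. Hence $\varphi\circ\phi$ and $\varphi\circ\psi$ take the same value on $\C$, and since this holds for every cell, the two functions agree on all of $\F^{k}$, so the two pullback forts are identical.

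For the combinatorial equivalence of the extensions $(\phi,Id),(\psi,Id):\phi^{*}\G^{\ell}\to\G^{\ell}$, observe that a cell of $\phi^{*}\G^{\ell}$ has the form $\C\times\C''$ with $\C\in C(\F^{k})$ and $\C''$ a length-$1$ integer cell contained in $(0,(\varphi\circ\phi)|_{\C})$. With $\C'\in C(\G^{k})$ the common cell containing $\phi(\C)$ and $\psi(\C)$, the quantity $(\varphi\circ\phi)|_{\C}=(\varphi\circ\psi)|_{\C}$ equals $\varphi(\C')$, so $\C'\times\C''$ is a cell of $\G^{\ell}=\G^{k}\odot\varphi$. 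Both extensions send $\C\times\C''$ into $\C'\times\C''$, yielding combinatorial equivalence.

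For the inductive step with $\ell-k\geq 2$, set $\G^{\ell-1}:=\pi_{\ell-1}(\G^{\ell})$, which extends $\G^{k}$ with height $\ell-1-k$. By the induction hypothesis, $\phi^{*}\G^{\ell-1}=\psi^{*}\G^{\ell-1}$, and the extended morphisms $\hat{\phi},\hat{\psi}$ from this common fort to $\G^{\ell-1}$ are combinatorially equivalent. By the iterative definition of pullback, $\phi^{*}\G^{\ell}=\hat{\phi}^{*}\G^{\ell}$ and $\psi^{*}\G^{\ell}=\hat{\psi}^{*}\G^{\ell}$ (each taken with respect to the one-step extension $\G^{\ell-1}\subset\G^{\ell}$), so applying the base case to $\hat{\phi},\hat{\psi}$ simultaneously delivers the equality $\phi^{*}\G^{\ell}=\psi^{*}\G^{\ell}$ and the combinatorial equivalence of the full extensions of $\phi,\psi$.

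The only subtlety worth flagging is that $\F^{k}\odot(\varphi\circ\phi)$ really is a fort, which requires $\varphi\circ\phi$ to be constant on every cell of $\F^{k}$; this follows from item $3$ of Definition \ref{defn:fortmor} together with the cell-constancy of $\varphi$, and poses no real obstacle. Otherwise the argument is a bookkeeping exercise chasing the definitions, with the combinatorial equivalence hypothesis feeding in exactly where needed to show that compositions with $\varphi$ depend only on the cellwise image.
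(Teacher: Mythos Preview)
Your proof is correct. The paper leaves this lemma as an exercise for the reader, and your induction on $\ell-k$ with the base case unwinding the definition $\phi^{*}\G^{\ell}=\F^{k}\odot(\varphi\circ\phi)$ is exactly the natural way to carry it out.
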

\subsection{The Tower construction}
Given a morphism $\phi:\F\to\G$, it will be more convenient to keep track of the formats and degrees of the restrictions of $\phi$ to the integer cells of $\F$, rather than the format and the degree of $\phi$ itself. We therefore introduce the following useful slight abuse of notation.
\begin{defn}
\label{defn:abuse}
We say that the family  $\{\phi^{\lambda}:\F\to\G\}_{\lambda\in\Lambda}$ of morphisms is an $(\cc{F},D)$-family of morphisms if for every $\C\in C(\F)$ the family $\{\phi^{\lambda}|_{\C}\}_{\lambda\in\Lambda}$ is an $(\cc{F},D)$-family as in Definition \ref{defn:format_degree_of_family}. 
\end{defn}
\begin{rem}
Let $\phi$ be any morphism, then in particular, the format and degree of $\phi$ as a morphism are different from its format and degree as a map. 
\end{rem}
If $\C$ is an integer cell, let $b(\C)$ be the basic cell with the same type as $\C$. If $m\in\bb{Z}$, denote $t_{m}(x_1,\dots,x_{\ell}):=(x_1+m,x_2,\dots,x_{\ell}).$

The point of Proposition \ref{prop:tower_construction} below is, that in order to construct a morphism to a fort $\F^{1}\ltimes s$, it is enough to construct for each $\C\in C(\F^{1})$ a morphism to $b(\C)\times s(\C)$. 
\begin{prop}[The tower construction]
\label{prop:tower_construction}
Let $\F^{1}\ltimes s$ be a fort of length $\ell$, and let $\C_1,\dots,\C_n$ be the cells of $\F^{1}$, ordered compatibly with their order in $\bb{R}$. Suppose that for every $1\leq i\leq n$ one has a morphism $\phi_{\C_i}:\F^{\ell}_{\C_i}\to b(\C_i)\times s(\C_i)$ of format $\cc{F}$ and degree $D$. Let $m_i:=\sup \pi_1(\F^{\ell}_{\C_i})$, and $m_0=0$. Then \begin{equation}
    \F^{\ell}:=\bigcup^{n-1}_{i=0}t_{m_0+\dots+m_i}(\F^{\ell}_{\C_{i+1}})
\end{equation}
is a fort, and the map $\phi:\F^{\ell}\to\F^{1}\ltimes s$ defined by $\phi|_{t_{m_0+\dots+m_i}(\F^{\ell}_{\C_{i+1}})}:=\phi_{\C_i}\circ t_{-(m_0+\dots+m_i)}$ is a morphism of format $\format$ and degree $\poly_{\cc{F}}(D)$.
\end{prop}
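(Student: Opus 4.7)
The plan is to establish the proposition in three stages: verify that $\F^{\ell}$ is a fort by fitting the construction into Definition \ref{defn:fortsx}, verify that the piecewise map $\phi$ satisfies the three requirements of Definition \ref{defn:fortmor}, and finally extract the format/degree bounds using Lemma \ref{lem:sharp_arithmetic}. To carry out the first stage, I would exhibit $\F^{\ell}$ explicitly in the form $\G^{1}\ltimes s_{\textnormal{new}}$ with $\G^{1}=(0,M)$ for $M := m_{1}+\dots+m_{n}$. The translated pieces $t_{m_{0}+\dots+m_{i}}(\F^{\ell}_{\C_{i+1}})$ have first-coordinate projection equal to the sub-interval $(m_{0}+\dots+m_{i},\, m_{0}+\dots+m_{i+1})$, or to a single integer point when $\C_{i+1}$ is a point-type cell with $m_{i+1}=0$; by the ordering of the $\C_{i}$ these pieces fit together without overlap, cover $(0,M)$ under $\pi_{1}$, and their integer-cell structures align with the integer-cell structure of $(0,M)$, so $\F^{\ell}$ is a fort and the assignment $s_{\textnormal{new}}$ is read off directly from the fibers of the individual $\F^{\ell}_{\C_{i+1}}$.

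Next, I would verify the three properties of a morphism. Surjectivity is immediate from the definition of $\F^{1}\ltimes s$ together with surjectivity of each $\phi_{\C_{i+1}}$. Precellularity follows because integer translations in $x_{1}$ are affine and strictly increasing, and precellularity is preserved under composition with such a map. For the third clause of Definition \ref{defn:fortmor}, each cell $\C \in C(\F^{\ell})$ is uniquely of the form $t_{m_{0}+\dots+m_{i}}(\C')$ for some $\C' \in C(\F^{\ell}_{\C_{i+1}})$; then $\phi|_{\C} = \phi_{\C_{i+1}}|_{\C'}\circ t_{-(m_{0}+\dots+m_{i})}$ is continuous, and it maps $\C$ into a single cell of $b(\C_{i+1})\times s(\C_{i+1})$, which under the canonical integer identification $b(\C_{i+1})\cong\C_{i+1}$ becomes a single cell of $\F^{1}\ltimes s$.

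Finally, each restriction $\phi|_{\C}$ is the composition of an integer translation (affine, hence of format $O_{\cc{F}}(1)$ and degree $O(1)$) with a map of format $\cc{F}$ and degree $D$. Lemma \ref{lem:sharp_arithmetic} then gives format $O_{\cc{F}}(1)$ and degree $\poly_{\cc{F}}(D)$ on each cell, which is exactly the bound claimed under the convention of Definition \ref{defn:abuse}. The main obstacle is the combinatorial bookkeeping in the first stage: one must carefully verify that the translated forts really glue into a single fort, especially in the mixed situation where $\F^{1}$ contains both point cells and interval cells and the $m_{i}$ are a mix of zero and positive integers. Once this combinatorial picture is nailed down, the remaining steps are essentially formal.
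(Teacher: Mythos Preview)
Your proposal is correct and is simply a careful unpacking of what the paper dismisses in one line: the paper's entire proof reads ``The proof is immediate.'' Your three-stage plan (fort structure via Definition~\ref{defn:fortsx}, verification of the three morphism axioms, and the format/degree bound via Lemma~\ref{lem:sharp_arithmetic}) is exactly the natural way to make that immediacy explicit, and you have correctly identified and handled the one genuinely delicate point, namely the need for the canonical integer identification $b(\C_{i+1})\cong\C_{i+1}$ so that the image actually lands in $\F^{1}\ltimes s$ rather than in $b(\C_{i+1})\times s(\C_{i+1})$.
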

The proof is immediate. Note that the derivatives of $\phi$ are equal to the derivatives of $\phi_{\C_i}$, which will be crucial for constructing morphisms with bounded derivatives.
\subsection{Smoothing}We say that a morphism of forts is a $C^{r}$-morphism if its restriction to every integer cell is $C^{r}$.
We use Proposition \ref{prop:Cr_locus_effective} in the context of forts in the following way. 
\begin{prop}
\label{prop:smoothing}
\phantom{o}\\
$Sm_{\ell}$: Let 
 $\phi:\F^{\ell}\to\G^{\ell}$ be a natural morphism of format $\cc{F}$ and degree $D$, and fix $r\in\bb{N}$. Then there exists a fort $\fort{K}^{\ell}$ of size $\poly_{\max\{\cc{F},r\}}(D,|C(\F|))$ and a natural $C^{r}$-morphism $\psi:\fort{K}^{\ell}\to\F^{\ell}$ of format $\rformat$ and degree $\rcomp$ such that $\phi\circ\psi$ is a $C^{r}$-morphism.\\ \\
 $Fsm_{\ell}$: Let $\F^{\ell}$ be a fort and for every cell $\C\in C(\F)$ let $F_{C}=\{f_{\C,j}:\C\to I\}_{j\in J}$ be a collection of definable functions of format $\cc{F}$ and degree $D$. Then there exists a natural $C^{r}$-morphism $\phi:\fort{K}^{\ell}\to\F^{\ell}$ of format $\rformat$ and degree $\rcomp$ where $|C(\fort{K}^{\ell})|=\poly_{\max\{\cc{F},r\}}(D,|\F|,|J|)$ such that for every $\C'\in C(\fort{K}^{\ell})$ and every $\C\in C(\F)$ with $\phi(\C')\subset\C$, the function $\phi|_{\C'}^{*}f_{\C,j}$ is $C^{r}$ for every $j\in J$.
\end{prop}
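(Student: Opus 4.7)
We prove $Fsm_\ell$ by induction on $\ell$; $Sm_\ell$ then follows by applying $Fsm_\ell$ with the collection $\{f_{\C,j}\}$ taken to be the (suitably rescaled) coordinate functions of $\phi|_\C$ on each $\C\in C(\F)$, so that the pullback condition becomes exactly the $C^r$-ness of $\phi\circ\psi$ on every cell. For the base case $\ell=1$, Proposition \ref{prop:Cr_locus_effective} produces $\poly_{\cc{F},r}(D,|J|)$ non-smooth points on each integer interval cell; we subdivide the intervals at these points and take $\fort{K}^1$ to be an integer interval whose cells map affinely onto the sub-intervals.

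For the inductive step, on each $\C\in C(\F^\ell)$ apply Proposition \ref{prop:Cr_locus_effective} to $\{f_{\C,j}\}_{j\in J}$ to obtain a codimension $\geq 1$ non-smooth locus $V_\C\subset\C$ of format $\rformat$ and degree $\rcomp$. Invoke Proposition \ref{prop:cyl_dec_forts} to produce a natural morphism $\alpha:\fort{H}^\ell\to\F^\ell$, of controlled size, format, and degree, compatible with all $V_\C$. For each cell $\widetilde{\C}'\in C(\pi_{\ell-1}(\fort{H}^\ell))$ and each integer $k$ in the fiber over $\widetilde{\C}'$, set $g_{\widetilde{\C}',k}(y'):=\alpha_\ell(y',k)$; these are definable continuous functions, and by construction the ambient $f_{\C,j}$ is $C^r$ on the open strips $\{g_{\widetilde{\C}',k}<x_\ell<g_{\widetilde{\C}',k+1}\}$ that map into $\C$.

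Next, apply $Fsm_{\ell-1}$ to $\pi_{\ell-1}(\fort{H}^\ell)$ with the collection on each cell $\widetilde{\C}'$ consisting of: (a) the boundary functions $g_{\widetilde{\C}',k}$; (b) for every graph cell $\{x_\ell=g_{\widetilde{\C}',k}(y')\}$ contained in a cell $\C\in C(\F^\ell)$ and every $j$, the restriction $y'\mapsto f_{\C,j}(y',g_{\widetilde{\C}',k}(y'))$; and (c) for every integer-slice cell $\widetilde{\C}\times\{k'\}$ of $\F^\ell$ whose preimage under $\alpha$ meets $\widetilde{\C}'$, the composition $f_{\widetilde{\C}\times\{k'\},j}\circ\alpha_{1\dots\ell-1}|_{\widetilde{\C}'}$. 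All these functions have format $\rformat$ and degree $\rcomp$ by Lemma \ref{lem:sharp_arithmetic}. The inductive hypothesis yields a natural $C^r$-morphism $\beta:\fort{L}^{\ell-1}\to\pi_{\ell-1}(\fort{H}^\ell)$ whose pullbacks of (a)--(c) are all $C^r$.

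Finally, set $\phi:=\alpha\circ\psi:\fort{K}^\ell\to\F^\ell$ via the Tower construction (Proposition \ref{prop:tower_construction}), where $\psi:\fort{K}^\ell\to\fort{H}^\ell$ has $\psi_{1\dots\ell-1}=\beta$ and $\psi_\ell(x',s)=s$, the fiber length of $\fort{K}^\ell$ above $\widetilde{\C}''\in C(\fort{L}^{\ell-1})$ being chosen equal to that of $\fort{H}^\ell$ above $\beta(\widetilde{\C}'')$. Since compositions of natural morphisms are natural, and on an integer-interval fiber cell one computes $\phi_\ell(x',s)=g_{\widetilde{\C}',k}(\beta(x'))+(s-k)(g_{\widetilde{\C}',k+1}(\beta(x'))-g_{\widetilde{\C}',k}(\beta(x')))$, which is affine in $s$ with $C^r$ coefficients by (a), $\phi$ is a natural $C^r$-morphism. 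Pullbacks of $f_{\C,j}$ are $C^r$ on top-dimensional cells (composition of two $C^r$ maps), on graph cells (where they equal $\beta^*$ of a function in (b)), and on integer-slice cells (where they equal $\beta^*$ of a function in (c)). Sharp bounds on size, format, and degree follow by combining Propositions \ref{prop:Cr_locus_effective}, \ref{prop:cyl_dec_forts}, and the inductive hypothesis. The main subtlety is that restrictions of $f_{\C,j}$ to graph cells lie \emph{inside} the bad locus $V_\C$ and are generically not $C^r$ as restrictions of the ambient function; carrying them as separate functions in (b) is exactly what the inductive application of $Fsm_{\ell-1}$ must resolve.
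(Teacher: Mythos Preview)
Your single-pass induction for $Fsm_\ell$ has a genuine gap: the case analysis at the end (``top-dimensional cells / graph cells / integer-slice cells'') is not exhaustive. A cell $\C'' \in C(\fort{K}^\ell)$ of type $(\tau,1)$ with at least one $0$ in $\tau$ maps via $\alpha\circ\psi$ into a cell $\C\in C(\F^\ell)$, and by compatibility the image $\alpha(\psi(\C''))$ is either disjoint from $V_\C$ or contained in $V_\C$. Because $\dim\C''$ can be strictly smaller than $\dim\C$, the second alternative can occur; then $f_{\C,j}$ is \emph{not} $C^r$ on the image and your ``composition of two $C^r$ maps'' argument fails. The restriction of $f_{\C,j}$ to this image is a genuine function of $x_\ell$, so it cannot be added to the list for $Fsm_{\ell-1}$ (which only sees functions on the base). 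A concrete failure: take $\ell=2$, $\F^2=I^2$, and $f(x_1,x_2)=|x_1-\tfrac12|+g(x_2)$ with $g$ not $C^r$ at some $c\in I$. Then $V=\{x_1=\tfrac12\}\cup\{x_2=c\}$, and a cylindrical decomposition compatible with $V$ may place a single cell of type $(0,1)$ over $x_1=\tfrac12$ (the whole vertical line is already in $V$). On that cell the pullback of $f$ is $s\mapsto g(s)$, still not $C^r$.

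The paper avoids this by a different induction scheme: it proves $FSm_{\ell-1}\Rightarrow Sm_\ell$ directly (using that a natural morphism has $\phi_\ell$ affine in $x_\ell$, so all the data reduces to functions on $\pi_{\ell-1}(\F^\ell)$), and then proves $Sm_\ell\Rightarrow FSm_\ell$ by an \emph{inner iteration} on $k=$ the minimal number of zeros in the type of a cell carrying a non-$C^r$ function. One round (Proposition~\ref{prop:Cr_locus_effective} $+$ Proposition~\ref{prop:cyl_dec_forts} $+$ $Sm_\ell$) forces any still-bad pullback to live on a cell with at least one more zero, so $k$ strictly increases and the loop terminates after $\ell$ rounds. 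Your construction is essentially the first round of this loop. A secondary omission: you never include the coordinate functions of $\alpha_{1\dots\ell-1}$ in the list for $Fsm_{\ell-1}$, so there is no reason for $\phi_{1\dots\ell-1}=\alpha_{1\dots\ell-1}\circ\beta$ to be $C^r$; the paper records these explicitly (its collection $F^3_{\C'}$).
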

\begin{proof}
The proof is by an induction similar in structure to the induction proof of the main result. That is, we prove $FSm_{1},FSm_{\ell-1}\to Sm_{\ell}, Sm_{\ell}\to FSm_{\ell}$. \\ \\
$FSm_{1}$: By the tower construction we may assume that $\F^{1}=I$, and we omit $\C$ from the notation $f_{\C,j}$. According to Proposition \ref{prop:Cr_locus_effective} each $f_{j}$ is $C^{r}$ outside $\rcomp$ points. Therefore all the $f_{j}$ are $C^{r}$ outside $|J|\cdot\rcomp$ points. We finish by applying Propositon \ref{prop:cyl_dec_forts} to these points, and note that any natural morphism of length $1$ is automatically $C^{r}$. \\ \\ 
$FSm_{\ell-1}\to Sm_{\ell}$: Let $\C\in C(\F^{\ell})$ be a cell. Since $\phi$ is natural, if $\textnormal{type}(\C)=(\dots,1)$ then there are $f_{\C},g_{\C}:\pi_{\ell-1}(C)\to\bb{R}$ such that
\begin{equation}
    \phi|_{\C}(x_1,\dots,x_{\ell})=(1-x_{\ell})f_{C}(x_1,\dots,x_{\ell-1})+x_{\ell}g_{C}(x_1,\dots,x_{\ell-1}).
\end{equation}
If $\textnormal{type}(\C)=(\dots,0)$, then $\phi_{\ell}$ can be identified with a function on $\C'=\pi_{\ell-1}(\C)$. Let $\F^{\ell-1}:=\pi_{\ell-1}(\F^{\ell})$ and for every $\C'\in C(\F^{\ell-1})$ define
\begin{equation}
\label{equation:F^(1)_of_smoothing}
\begin{split}
    F^{1}_{\C'}&:=\{f_{\C},g_{\C}:\;\C\in C(\F^{\ell}),\;\pi_{\ell-1}(\C)=\C',\;\textnormal{type}(\C)=(\dots,1)\},\\
    F^{2}_{\C'}&:=\{\phi_{\ell}|_{\C}:\;\C\in  C(\F^{\ell}),\;\pi_{\ell-1}(\C)=\C',\;\textnormal{type}(\C)=(\dots,0)\},\\
    F^{3}_{\C'}&:=\{\phi_{1}|_{\C'},\dots,\phi_{\ell-1}|_{\C'}\}.
\end{split}
\end{equation}
In other words, $F^{1}_{\C'}$ and $F^{2}_{\C'}$ correspond to the $\ell'th$ coordinate of $\phi$, while $F^{3}_{\C'}$  corresponds to the first $\ell-1$ coordinates. Now, apply $FSm_{\ell-1}$ to the fort $\F^{\ell-1}$ and $F_{\C'}=F^{1}_{\C'}\cup F^{2}_{\C'}\cup F^{3}_{\C'}$. We obtain a natural $C^{r}$-morphism $\psi':\fort{K}^{\ell-1}\to\F^{\ell-1}$. Since $F^{3}_{\C'}\subset F_{\C}$, the morphism $\phi_{1...\ell-1}\circ\psi':\fort{K}^{\ell-1}\to\pi_{\ell-1}(\G^{\ell})$ is a $C^{r}$-morphism. Now let $\fort{K}^{\ell}:=(\psi')^{*}\F^{\ell}$, so that $\psi'$ extends to a natural $C^{r}$-morphism $\psi:\fort{K}^{\ell}\to\F^{\ell}$. Finally, due to $F^{1}_{\C'},F^{2}_{\C'}\subset F_{\C'}$, we see that $\phi\circ\psi$ is $C^{r}$, as needed. It is easy to track the formats and degrees of this construction. \\ \\ 
$Sm_{\ell}\to FSm_{\ell}:$ Let $k$ be the smallest integer such that there exists a cell $\C$ with at least $k$ zeros in its type and a $j\in J$ such that $f_{\C,j}$ is not $C^{r}$ on $\C$. Due to Proposition \ref{prop:Cr_locus_effective}, for every $\C\in C(\F)$ there exists a  definable set $V_{\C}\subset\C$ of format $\rformat$ and degree $\poly_{\max\{\cc{F},r\}}(D,|J|)$ with $\dim(\C)-\dim(V_{\C})\geq 1$ such that outside it, $f_{\C,j}$ is $C^{r}$ for every $j\in J$. By Proposition \ref{prop:cyl_dec_forts} there is a natural morphism $\psi:\fort{G}^{\ell}\to\F^{\ell}$ compatible with the collection $\{V_{\C}\}_{\C\in C(\F^{\ell})}$. By $Sm_{\ell}$ we may assume that $\psi$ is a $C^{r}$-morphism. Thus we have increased $k$ by 1, we repeat this procedure until $k=\ell$, at which point we are done.
\end{proof}
\subsection{The main result reformulated}
A morphism is called an $r$-morphism if all its restrictions to integer cells are $r$-maps.
We are now ready to state the family version of our main result in terms of forts and morphisms. We first state the case without parameters for simplicity. The reader should keep Definition \ref{defn:abuse} in mind.
\begin{thm}\phantom{o}\\
$S_{\ell,0}$: Let $\phi:\F^{\ell}\to\G^{\ell}$ be a natural morphism of format $\cc{F}$ and degree $D$. Then there exists a fort $\mathscr{K}^{\ell}$ with $|C(\mathscr{K^{\ell}})|=\poly_{\cc{F},r}(D,|C(\F^{\ell})|)$ and an $r$-morphism $\psi:\mathscr{K}^{\ell}\to\F^{\ell}$ of format $O_{\cc{F}}(1)$ and degree $\poly_{\cc{F}}(D,r)$ such that the composition $\phi\circ\psi$ is an $r$-morphism. \\ \\ 
$F_{\ell,0}$: Let $\F^{\ell}$ be a fort and for every $\C\in C(\F^{\ell})$ let $F_{\C}=\{f_{\C,j}:\C\to I\}_{j\in J}$ be a collection of definable functions such that each $f_{\C,j}$ has format $\cc{F}$ and degree $D$. Then there exists a fort $\mathscr{K}^{\ell}$ with $|C(\mathscr{K})|=\poly_{\max\{\cc{F},r\}}(D,|C(\F^{\ell})|)$ and an $r$-morphism $\phi:\mathscr{K}^{\ell}\to\F^{\ell}$ of format $O_{\cc{F},r}(1)$ and degree $\poly_{\cc{F},r}(D)$, such that for every $\C\in C(\F^{\ell}),j\in J$ and every $\C'\in C(\mathscr{K})$ with $\phi(\C')\subset\C$, the function $(\phi|_{\C'})^{*}f_{\C,j}$ is an $r$-function.
\end{thm}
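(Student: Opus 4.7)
The statements $S_{\ell,0}$ and $F_{\ell,0}$ will be proven as the $k=0$ case of the family versions $S_{\ell,k}$, $F_{\ell,k}$, which are established by induction on $\ell$ following the scheme announced in the introduction: the base is $F_{1,k}$ for every $k\in\bb{Z}_{\geq 0}$, and the inductive steps are $S_{\leq\ell,k}+F_{\leq\ell,k}\Rightarrow S_{\ell+1,k}$ and $S_{\ell+1,k}+F_{\leq\ell,k+1}\Rightarrow F_{\ell+1,k}$. The family formulation is essential because the inductive steps naturally produce fibrewise constructions which only become morphisms once parameters are reintroduced.

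\textbf{Base case $F_{1,k}$.} Via the tower construction reduce to $\F^{1}=I$. Apply $FSm$ from Proposition \ref{prop:smoothing} to make each $f_{\lambda,j}$ of class $C^{r}$ on each subinterval. Then carry out a Gromov-style subdivision: use \s CD to cut $I$ at the loci where the derivatives $f^{(s)}_{\lambda,j}$ for $1\leq s\leq r$ cross prescribed dyadic thresholds, and reparametrize each piece by an affine map so that, after pullback, every derivative up to order $r$ has norm at most $1$. Sharp definable choice (Proposition \ref{prop:Effective_definable_choice}) and \s CD ensure that the resulting fort and morphism have size and degree polynomial in $D$ and $|J|$, uniformly in $\lambda$.

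\textbf{Inductive steps.} For $S_{\ell+1,k}$: given a natural morphism $\phi:\F^{\ell+1}\to\G^{\ell+1}$, apply $S_{\ell,k}$ to the base morphism $\phi_{1\dots\ell}$ to obtain an $r$-morphism $\psi':\mathscr{K}^{\ell}\to\pi_{\ell}(\F^{\ell+1})$, then pull back $\F^{\ell+1}$ along $\psi'$. On each cell $\C$, the last coordinate $\phi_{\ell+1}|_{\C}$ is either affine in $x_{\ell+1}$ with boundary functions $f_{\C},g_{\C}$ (type $(\dots,1)$) or independent of $x_{\ell+1}$ (type $(\dots,0)$); apply $F_{\ell,k}$ to the collection of pullbacks $\{f_{\C},g_{\C},\phi_{\ell+1}|_{\C}\}$ so that they become $r$-functions on the refined base, and finish by a fibrewise affine rescaling of $x_{\ell+1}$. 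For $F_{\ell+1,k}$: promote the last coordinate $x_{\ell+1}$ to an extra parameter, so that the family $\{f_{\C,j}\}$ becomes a family of functions on an $\ell$-dimensional situation with $k+1$ parameters; apply $F_{\ell,k+1}$ to obtain a fibrewise $r$-parametrization, then invoke $S_{\ell+1,k}$ together with Proposition \ref{prop:family_of_combinatorially_equivalent_morphisms_is_morphism} to stitch the fibrewise data into a genuine $(\ell+1)$-dimensional morphism on $\F^{\ell+1}$.

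\textbf{Main obstacle.} The hard part is the sharp bookkeeping: ensuring that $|C(\mathscr{K})|$ remains polynomial in $D$, $|C(\F)|$, and $|J|$, and that the morphism maintains format $O_{\cc{F},r}(1)$ and degree $\poly_{\cc{F},r}(D)$ throughout both induction steps. In both steps, the fibrewise construction yields a parametrized family whose assembled total map need not be a morphism; forcing it to be one requires further refinement via \s CD to enforce combinatorial equivalence on each base cell, and this refinement must be organized so that the sizes multiply polynomially rather than exponentially. This bookkeeping, together with verifying that the pullbacks of the boundary functions $f_{\C}, g_{\C}$ and the target functions $f_{\C,j}$ are simultaneously controlled, is the substantive content beyond the non-sharp version.
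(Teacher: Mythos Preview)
Your outline captures the inductive architecture, but there are two genuine gaps where the argument as written would fail.

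\textbf{Base case $F_{1,k}$.} Cutting $I$ at the loci where $|f^{(s)}_{\lambda,j}|$ crosses dyadic thresholds and then reparametrising each piece \emph{affinely} does not work: the higher derivatives $f^{(r)}_{\lambda,j}$ are typically unbounded near the endpoints of the pieces, so no finite number of thresholds suffices, and an affine rescaling multiplies $f^{(r)}$ by $L^{r}$ for the piece length $L$, which cannot kill an unbounded quantity. The paper's argument is different: first reduce to the case where all $f_{j,\lambda}$ are $1$-functions by comparing the $|f'_{j,\lambda}|$ and, on each piece where the largest one exceeds $1$, reparametrising by the inverse of that function; then iterate Gromov's trick (Lemma~\ref{lem:Gromov_lemma_for_forts}) $r-1$ times, where the key is the \emph{non-affine} reparametrisation $\phi(x)=3x^{2}-2x^{3}$ whose derivative vanishes at both endpoints and thereby absorbs the blow-up of $f^{(r)}$ controlled by the mean-value inequalities~\eqref{equation:decreasing_case}--\eqref{equation:increasing_case}.

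\textbf{The step to $F_{\ell,k}$.} First, it is $x_{1}$, not $x_{\ell+1}$, that must be promoted to a parameter: the fort structure guarantees that the fibres over $x_{1}$ are forts (via the tower construction), whereas fibres over $x_{\ell+1}$ are not. More seriously, the fibrewise application of $F_{\ell-1,k+1}$ only achieves that $f_{\C,j,\lambda}(x_{1},\cdot)$ is an $r$-function for each fixed $x_{1}$; the derivatives involving $\partial_{x_{1}}$ remain completely uncontrolled, and neither $S_{\ell,k}$ nor Proposition~\ref{prop:family_of_combinatorially_equivalent_morphisms_is_morphism} addresses this. The paper devotes all of Section~6.2 to closing this gap: an induction on the lexicographically first multi-index $\alpha$ with $\alpha_{1}>0$ for which $f^{(\alpha)}$ is unbounded, using Lemma~\ref{lem:boundedness_of_derivatives} to show $f^{(\alpha)}(x_{1},\cdot)$ is at least bounded for generic $x_{1}$, then choosing (via sharp definable choice) curves $\gamma^{\C,j,\lambda}$ along which $|f^{(\alpha)}|$ is near its fibrewise supremum, applying $F_{1,k}$ to these curves and to $f^{(\alpha)}\circ\gamma$, and finally a chain-rule computation showing that after reparametrising $x_{1}$ the $\alpha$-derivative becomes bounded. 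This second half is the substantive analytic content of the step and is absent from your sketch.
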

$S_{\ell,0}$ implies $S^{*}_{\ell}$ of Theorem \ref{thm:mris} in the following way. Let $\Phi$ be a cylindrical decomposition of $I^{\ell}$ of size $N$ into cells of format $\cc{F}$ and degree $D$. Then $\Phi$ corresponds to a natural morphism $\phi:\F^{\ell}\to I^{\ell}$ such that $|C(\F^{\ell})|=N$ and the restrictions $\phi|_{\C}$ are of format $O_{\cc{F}}(1)$ and degree $\comp$ for all $\C\in C(\F^{\ell})$. Let $\psi:\mathscr{K}^{\ell}\to\F^{\ell}$ be the morphism whose existence is guaranteed by $S_{\ell,0}$ for $\phi$. Then the composition $\phi\circ\psi:\mathscr{K}^{\ell}\to I^{\ell}$ corresponds to cylindrical $r$-parametrization of a refinment $\Phi'$ of $\Phi$ that has size $|C(\mathscr{K})|=\poly_{\max\{\cc{F},r\}}(D,|C(\F^{\ell})|)=\poly_{\cc{F},r}(D,N)$. Moreover, for every $\C'\in\mathscr{K}$, the restriction $(\phi\circ\psi)|_{\C'}$ is a composition of two maps that have format $\format$, and have degrees $\comp,\rcomp$. Therefore by Lemma \ref{lem:sharp_arithmetic}, $(\phi\circ\psi)|_{\C'}$ has format $\format$ and degree $\rcomp$, as needed. $F_{\ell,0}$ implies $F^{*}_{\ell}$ in a similar way.\\ \\ 

We now state the family version of our main result.
\begin{thm}
\label{thm:the_main_result_of_this_paper}
\phantom{o}\\
$S_{\ell,k}$: Let $\{\phi^{\lambda}:\F^{\ell}\to\G^{\ell}\}_{\lambda\in I^{k}}$ be an $(\cc{F},D)$-family of natural morphisms. Then there exists
\begin{itemize}
    \item A natural morphism $\varphi:\fort{K}^{k}\to I^{k}$ of format $\rformat$, degree $\rcomp$ where $|C(\fort{K}^{k})|=\poly_{\max\{\cc{F},r\}}(D,|C(\F^{\ell})|)$,
    \item For every $\cc{P}\in C(\fort{K})$ a fort  $\mathscr{K_{\cc{P}}}$ with $|C(\mathscr{K}_{\cc{P}})|=\poly_{\max\{\cc{F},r\}}(D,|C(\F^{\ell})|)$ and an $(\rformat,\rcomp)$-family of combinatorially equivalent $r$-morphisms $\{\psi^{\lambda}:\mathscr{K}_{\cc{P}}\to\F^{\ell}\}_{\lambda\in\varphi(\cc{P})}$,
\end{itemize}
such that for every $\lambda\in\varphi(\cc{P})$ the composition $\phi^{\lambda}\circ\psi^{\lambda}$ is an $r$-morphism. \\ \\ 
$F_{\ell,k}$: Let $\F^{\ell}$ be a fort and for every $\C\in C(\F^{\ell})$ let $F_{\C,\lambda}=\{f_{\C,j,\lambda}:\C\to I\}_{\lambda\in I^{k},j\in J}$ be a collection of $|J|$ families such that for every fixed $j\in J$ the family $\{f_{\C,j,\lambda}:\C\to I\}_{\lambda\in I^{k}}$ is an $(\cc{F},D)$-family of definable functions. Then there exists
\begin{itemize}
    \item A natural morphism $\varphi:\fort{K}^{k}\to I^{k}$ of format $\rformat$, degree $\poly_{\cc{F},r}(D)$ where $|C(\fort{K}^{k})|=\poly_{\max\{\cc{F},r\}}(D,|C(\F^{\ell})|,|J|)$,
    \item For every $\cc{P}\in C(\fort{K})$ a fort $\mathscr{K_{\cc{P}}}$ with $|C(\mathscr{K}_{\cc{P}})|=\poly_{\cc{F}}(D,|C(\F^{\ell})|,|J|)$ and an $(\rformat,\rcomp)$-family of combinatorially equivalent $r$-morphisms $\{\phi^{\lambda}:\mathscr{K}_{\cc{P}}\to\F^{\ell}\}_{\lambda\in\varphi(\cc{P})}$,
\end{itemize}
such that for every $\lambda\in\varphi(\cc{P})$ and every $f_{\C,j,\lambda}\in F_{\C,\lambda}$ where $\phi^{\lambda}(\C')\subset\C$, the pullback $\left(\phi^{\lambda}|_{\C'}\right)^{*}f_{\C,j,\lambda}$ is an $r$-function.
\end{thm}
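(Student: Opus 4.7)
The plan is to prove $S_{\ell,k}$ and $F_{\ell,k}$ simultaneously by induction on $\ell$, following the outline in Section 1.5: the base case is $F_{1,k}$ for all $k\geq 0$, and the inductive steps are $S_{\leq\ell,k}+F_{\leq\ell,k}\to S_{\ell+1,k}$ and $S_{\ell+1,k}+F_{\leq\ell,k+1}\to F_{\ell+1,k}$. The main tools will be the tower construction (Proposition \ref{prop:tower_construction}) to reduce claims about morphisms into $\F^{1}\ltimes s$ to claims on each slice $\C\times s(\C)$; the sharp refinement lemma on forts (Proposition \ref{prop:cyl_dec_forts}) to stratify parameter spaces compatibly with definable data; Proposition \ref{prop:smoothing} to arrange $C^{r}$-differentiability whenever needed; Lemma \ref{lem:combintorial_equivalence_same_pullback} and Proposition \ref{prop:family_of_combinatorially_equivalent_morphisms_is_morphism} to assemble families of combinatorially equivalent morphisms over parameter cells into genuine morphisms of product forts; and Lemma \ref{lem:sharp_arithmetic} to track formats and degrees throughout.

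For the base case $F_{1,k}$, the tower construction reduces to $\F^{1}=I$, so one parametrizes an $(\cc{F},D)$-family of tuples of definable functions on a single interval. The classical one-dimensional Yomdin-Gromov argument partitions $I$ into subintervals on which each derivative $f_{j,\lambda}^{(s)}$ for $s\leq r$ is monotone and of controlled magnitude, and applies an affine rescaling on each piece. The endpoints of this partition depend definably on $\lambda$ with format $\rformat$ and degree $\rcomp$ by Proposition \ref{prop:Cr_locus_effective} and Remark \ref{rem:format_degree_of_derivatives}. Applying Proposition \ref{prop:cyl_dec_forts} to $I^{k}$ with these endpoints as definable data yields a fort $\fort{K}^{k}$ on whose cells the combinatorics of the partition is constant; definable choice (Proposition \ref{prop:Effective_definable_choice}) then supplies the rescalings as a combinatorially equivalent family of $r$-morphisms.

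For the step $S_{\leq\ell,k}+F_{\leq\ell,k}\to S_{\ell+1,k}$, given $\{\phi^{\lambda}:\F^{\ell+1}\to\G^{\ell+1}\}$, first apply $S_{\ell,k}$ to the base family $\phi_{1\dots\ell}^{\lambda}:\pi_{\ell}(\F^{\ell+1})\to\pi_{\ell}(\G^{\ell+1})$ to obtain, on each parameter cell $\cc{P}_{1}$, an $r$-morphism $\psi_{1}^{\lambda}:\fort{M}^{\ell}\to\pi_{\ell}(\F^{\ell+1})$ with $\phi_{1\dots\ell}^{\lambda}\circ\psi_{1}^{\lambda}$ an $r$-morphism. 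Since $\phi^{\lambda}$ is natural, on each integer cell $\C\in C(\F^{\ell+1})$ of type $(\dots,1)$ its last coordinate reads $(1-x_{\ell+1})f_{\C}^{\lambda}+x_{\ell+1}g_{\C}^{\lambda}$, and on cells of type $(\dots,0)$ it is a single function on $\pi_{\ell}(\C)$. Apply $F_{\ell,k}$ to the pulled-back family $\{(\psi_{1}^{\lambda})^{*}f_{\C}^{\lambda},(\psi_{1}^{\lambda})^{*}g_{\C}^{\lambda}\}$ to turn these into $r$-functions; pulling $\F^{\ell+1}$ back via Lemma \ref{lem:combintorial_equivalence_same_pullback}, the composed morphism has $r$-bounded first $\ell$ coordinates and an $(\ell+1)$-th coordinate that is an affine interpolation of $r$-functions, hence itself an $r$-function.

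For the step $S_{\ell+1,k}+F_{\leq\ell,k+1}\to F_{\ell+1,k}$, promote $x_{\ell+1}$ to an extra parameter so that each $f_{\widetilde{\C},j,\lambda}(\cdot,x_{\ell+1})$ becomes an $(\cc{F},D)$-family on $\pi_{\ell}(\widetilde{\C})$ parametrized by $(\lambda,x_{\ell+1})\in I^{k+1}$. Apply $F_{\ell,k+1}$ to obtain a base parametrization $\psi_{0}^{(\lambda,x_{\ell+1})}$, combinatorially equivalent across $x_{\ell+1}$, such that $(\psi_{0})^{*}f_{\widetilde{\C},j,\lambda}$ is an $r$-function in the first $\ell$ variables uniformly. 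By Proposition \ref{prop:family_of_combinatorially_equivalent_morphisms_is_morphism} and Lemma \ref{lem:combintorial_equivalence_same_pullback} these assemble into a natural morphism $\widetilde{\psi}_{0}:\fort{N}^{\ell+1}\to\F^{\ell+1}$, to which $S_{\ell+1,k}$ is applied to obtain a further reparametrization $\psi'$ so that $\widetilde{\psi}_{0}\circ\psi'$ is an $r$-morphism. The main obstacle is precisely in this last step: one must verify that the pullbacks $(\widetilde{\psi}_{0}\circ\psi')^{*}f_{\widetilde{\C},j,\lambda}$ remain $r$-functions. The $F_{\ell,k+1}$ application only controls derivatives of $(\psi_{0})^{*}f$ in the first $\ell$ variables uniformly in $x_{\ell+1}$ but not partials involving $x_{\ell+1}$, while the subsequent $S_{\ell+1,k}$ step smooths $\widetilde{\psi}_{0}$ itself in the $x_{\ell+1}$-direction. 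A careful application of the chain rule on each integer cell, exploiting the triangular structure of $\psi'$ so that $\psi'_{i}$ depends only on $x_{1},\dots,x_{i}$, is needed to bound all mixed partials of the final pullback up to order $r$; this delicate interaction between the two inductive hypotheses, together with the bookkeeping of formats and degrees, is the central technical burden of the proof.
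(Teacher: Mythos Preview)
Your outline matches the paper's inductive skeleton, and the step $F_{\leq\ell,k}\to S_{\ell+1,k}$ is essentially what the paper does (in fact the paper only needs $F_{\ell,k}$ there: it throws the coordinate functions $\phi^{\lambda}_{1},\dots,\phi^{\lambda}_{\ell}$ into the same bag as the floor/ceiling functions $f_{\C'},g_{\C'}$ and applies $F_{\ell,k}$ once, rather than doing $S$ then $F$). But there are two genuine gaps.

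\textbf{The base case $F_{1,k}$.} The phrase ``partitions $I$ into subintervals on which each derivative is monotone and of controlled magnitude, and applies an affine rescaling on each piece'' is not a proof. Affine rescalings are $1$-maps but cannot by themselves turn a $1$-function into an $r$-function: a bounded function on a short interval can still have $r$-th derivative of any size. The paper's argument has two separate ingredients: first, reparametrize by the inverse of whichever $f_{j,\lambda}$ has the largest derivative, which reduces to the case where all $f_{j,\lambda}$ are $1$-functions; second, iterate Gromov's lemma (Lemma~\ref{lem:Gromov_lemma_for_forts}), which uses the \emph{non-affine} substitution $\phi(x)=3x^{2}-2x^{3}$ to promote $(r-1)$-functions to $r$-functions. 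You have not mentioned either ingredient.

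\textbf{The step $S_{\ell+1,k}+F_{\leq\ell,k+1}\to F_{\ell+1,k}$.} This is where your plan diverges substantially from the paper, and I do not think your version goes through. You promote $x_{\ell+1}$ to a parameter; the paper promotes $x_{1}$. This is not a cosmetic choice. The cellular/triangular structure is asymmetric: the first coordinate of a morphism depends only on $x_{1}$, so reparametrizing $x_{1}$ does not disturb derivatives in $x_{2},\dots,x_{\ell}$, whereas the last coordinate depends on everything. Concretely, Proposition~\ref{prop:family_of_combinatorially_equivalent_morphisms_is_morphism} assembles a family over a parameter sitting in the \emph{first} slot into a morphism $I\times\F\to I\times\G$; there is no analogous assembly with the parameter in the last slot that lands in $\F^{\ell+1}=\F^{\ell}\odot\varphi$. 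Moreover, even accepting the assembly, your final sentence (``a careful application of the chain rule\dots is needed to bound all mixed partials'') is precisely the hard part, and the chain rule alone does not do it: after $F_{\ell,k+1}$ you have no control whatsoever on derivatives of $f$ in the promoted variable, and the subsequent $S_{\ell+1,k}$ only bounds derivatives of the \emph{morphism}, not of $f$. The paper's actual argument promotes $x_{1}$, uses the Lipschitz-to-continuous lemma (Lemmas~\ref{lem:family_of_lipshitz}--\ref{lem:family_of_lipshitz_family_version}) to get a genuine morphism, then runs a secondary induction on the lexicographically first unbounded multi-index $\alpha$. That induction relies on a nontrivial o-minimal fact (Lemma~\ref{lem:boundedness_of_derivatives}, the \so-minimal version of \cite[Lemma~12]{ALR}) ensuring $\partial_{x_{1}}f$ is bounded on most $x_{1}$-hyperplanes, followed by definable choice of a curve $\gamma^{\C,j,\lambda}$ along which $|f^{(\alpha)}|$ is near its fibrewise supremum, and an application of $F_{1,k}$ to reparametrize $x_{1}$ so that $f^{(\alpha-1_{1})}\circ\gamma$ becomes a $1$-function. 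None of this machinery is present in your proposal, and without it the mixed partials are simply not controlled.
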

\subsection{Final Remarks}
\begin{rem}
It is easy to notice that composing a morphism $\phi$ with linear subdivision of order $d$ decreases the partial derivatives of $\phi$ by a factor of at least $d^{-1}$. Similarly if $\phi:\F\to\G$ is a morphism and $f$ is a function on some cell of $\G$, then the $d$-subdivision of $\F$ will also decrease the partial derivatives of $\phi^{*}f$ by a factor of at least $d$. Linear subdivision of order $d$ increases the size of a fort of length $\ell$ by a factor of $d^{\ell}$, and quite often we shall use linear subdivision with $d=\rcomp$ or $d=\rformat$, we shall use this reduction freely. For example, the composition of two $r$-morphisms is an $r$-morphism up to linear subdivision of order $O_{\ell,r}(1)=\rformat$.
\end{rem}
\begin{rem}
\label{rem:legal_reduction}
Let $\F^{\ell}$ be a fort and for every $\C\in C(\F^{\ell})$ let $F_{\C,\lambda}=\{f_{\C,j,\lambda}:\C\to I\}_{\{\lambda\in I^{k},j\in J\}}$ be a collection of $|J|$ families such that for every fixed $j\in J$ the family $\{f_{\C,j,\lambda}:\C\to I\}_{\lambda\in I^{k}}$ is an $(\cc{F},D)$-family of definable functions. Suppose we wish to find
\begin{itemize}
    \item A natural morphism $\varphi:\fort{K}^{k}\to I^{k}$ of format $\rformat$, degree $\poly_{\cc{F},r}(D)$ where $|C(\fort{K}^{k})|=\poly_{\max\{\cc{F},r\}}(D,|C(\F^{\ell})|,|J|)$,
    \item For every $\cc{P}\in C(\fort{K})$ a fort $\mathscr{K_{\cc{P}}}$ with $|C(\mathscr{K}_{\cc{P}})|=\poly_{\cc{F}}(D,|C(\F^{\ell})|,|J|)$ and an $(\rformat,\rcomp)$-family of combinatorially equivalent $r$-morphisms $\{\phi^{\lambda}:\mathscr{K}_{\cc{P}}\to\F^{\ell}\}_{\lambda\in\varphi(\cc{P})}$,
\end{itemize}
such that for every $\lambda\in\varphi(\cc{P})$ and every $f_{\C,j,\lambda}\in F_{\C,\lambda}$ where $\phi^{\lambda}(\C')\subset\C$, the pullback $\left(\phi^{\lambda}|_{\C'}\right)^{*}f_{\C,j,\lambda}$ has a property $P$. Suppose moreover that this can be done if the functions $f_{\C,j,\lambda}$ have the property $Q$. Then it is sufficient to find 
\begin{itemize}
    \item A natural morphism $\varphi':\fort{K}'^{k}\to I^{k}$ of format $\rformat$, degree $\poly_{\cc{F},r}(D)$ where $|C(\fort{K}^{k})|=\poly_{\max\{\cc{F},r\}}(D,|C(\F^{\ell})|,|J|)$,
    \item For every $\cc{P}\in C(\fort{K}'^{k})$ a fort $\mathscr{K_{\cc{P}}}'$ with $|C(\mathscr{K}_{\cc{P}}')|=\poly_{\cc{F}}(D,|C(\F^{\ell})|,|J|)$ and an $(\rformat,\rcomp)$-family of combinatorially equivalent $r$-morphisms $\{\phi'^{\lambda}:\mathscr{K}'_{\cc{P}}\to\F^{\ell}\}_{\lambda\in\varphi(\cc{P})}$,
\end{itemize}
such that for every $\lambda\in\varphi(\cc{P})$ and every $f_{\C,j,\lambda}\in F_{\C,\lambda}$ where $\phi^{\lambda}(\C')\subset\C$, the pullback $\left(\phi'^{\lambda}|_{\C'}\right)^{*}f_{\C,j,\lambda}$ has the property $Q$.
 Indeed, we first decompose $I^{k}$ into cells $\cc{P}$ on  which the functions $f_{\C,j,\lambda}$ have the property $Q$, and then we decompose $\cc{P}$ into cells on which the functions $f_{\C,j,\lambda}$ have the property $P$. The upper bounds on format and degree and the size of the forts above follow from Lemma \ref{lem:makecyl} and linear subdivision.
 
 In particular, we will prove $F_{\ell,k}$ step by step, at each step reducing to the case where the functions $f_{\C,j,\lambda}$ have an additional property.
 \end{rem}
\section{The step \texorpdfstring{$F_{1,k}$}{Lg}}
We will need a series of lemmas.
\begin{lem}
\label{lem:smoothing_a_family}
Let $\{f_{1,\lambda}:I\to I\}_{\lambda\in I^{k}},\dots,\{f_{s,\lambda}:I\to I\}_{\lambda\in I^{k}}$ be $(\cc{F},D)$-families, and let $r\in\bb{N}$. Then there exists\begin{itemize}
    \item A natural morphism $\varphi:\fort{K}^{k}\to I^{k}$ of format $\rformat$, degree $\poly_{\cc{F},r}(D,s)$ where $|C(\fort{K}^{k})|=\poly_{\cc{F},r}(D,s)$,
    \item For every $\cc{P}\in C(\fort{K}^{k})$ a fort $\fort{K}^{1}_{\cc{P}}$ with $|C(\fort{K}^{1}_{\cc{P}})|=\poly_{\cc{F},r}(D,s)$ and an $(\rformat,\rcomp)$-family of combinatorially equivalent natural morphisms $\{\phi^{\lambda}:\fort{K}^{1}_{\cc{P}}\to I\}_{\lambda\in\varphi(\cc{P})}$,
\end{itemize} 
such that for every $\lambda\in\varphi(\cc{P})$ and every $\C\in C(\fort{K}_{\cc{P}})$ the pullbacks $(\phi^{\lambda}|_{\C})^{*}f_{i,\lambda}$ are $C^{r}$-functions. 
\end{lem}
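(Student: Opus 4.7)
The plan is to treat this as a family version of the step $FSm_1$ in Proposition \ref{prop:smoothing}. For each fixed $\lambda$, the locus where some $f_{i,\lambda}$ fails to be $C^r$ is a finite subset of $I$, and a natural morphism to $I$ of length $1$ is completely determined by the locations of its integer breakpoints and affine interpolation in between. So it suffices to decompose the parameter space $I^k$ into cells on which the bad points are given by continuous definable sections, and then define $\phi^\lambda$ by sending the integer points of $\fort{K}^1_{\cc{P}}$ to these sections.

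In detail, I would first apply Proposition \ref{prop:Cr_locus_effective} (in the family setting, using the fact that the families are jointly definable) to obtain for each $i\in\{1,\dots,s\}$ a definable set $V_i\subset I^k\times I$ of format $\rformat$ and degree $\rcomp$ such that for every fixed $\lambda$ the fiber $(V_i)_\lambda$ is a finite subset of $I$ containing all points where $f_{i,\lambda}$ is not $C^r$. Next, apply \s CD to $I^{k+1}$ compatible with $V_1,\dots,V_s$; this yields a cylindrical decomposition of size $\poly_{\cc{F},r}(D,s)$ whose cells have format $\rformat$ and degree $\rcomp$. The induced cylindrical decomposition of $I^k$ corresponds via Proposition \ref{prop:cyl_dec_forts} to the natural morphism $\varphi:\fort{K}^k\to I^k$ of the required bounds. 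Over each cell $\cc{P}\in C(\fort{K}^k)$ the bad points are packaged into an ordered tuple of continuous definable sections $0=\xi_0(\lambda)<\xi_1(\lambda)<\dots<\xi_{m(\cc{P})}(\lambda)<\xi_{m(\cc{P})+1}(\lambda)=1$ (coming from the $(\dots,0)$-type cells over $\varphi(\cc{P})$ in the decomposition of $I^{k+1}$), each of format $\rformat$ and degree $\rcomp$, with $m(\cc{P})\leq \poly_{\cc{F},r}(D,s)$.

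For each $\cc{P}$, set $\fort{K}^1_{\cc{P}}:=(0,m(\cc{P})+1)$ and define the family $\phi^\lambda:\fort{K}^1_{\cc{P}}\to I$ for $\lambda\in\varphi(\cc{P})$ by $\phi^\lambda(j)=\xi_j(\lambda)$ at the integer points $j=1,\dots,m(\cc{P})$, with linear interpolation on each open integer interval $(j,j+1)$. Each $\phi^\lambda$ is natural by construction. On a point cell $\{j\}$ the pullback of $f_{i,\lambda}$ is the constant $f_{i,\lambda}(\xi_j(\lambda))$, trivially $C^r$; on an open interval cell $\C=(j,j+1)$ we have $\phi^\lambda(\C)=(\xi_j(\lambda),\xi_{j+1}(\lambda))$, which by construction contains no point where $f_{i,\lambda}$ fails to be $C^r$, so the pullback is $C^r$ as a composition of $C^r$ maps. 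The family $\{\phi^\lambda\}_{\lambda\in\varphi(\cc{P})}$ is an $(\rformat,\rcomp)$-family of morphisms by Lemma \ref{lem:sharp_arithmetic} applied to the affine interpolation formula, while combinatorial equivalence is automatic because the target fort $I$ has a single cell.

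The main obstacle is essentially bookkeeping: carefully combining Proposition \ref{prop:Cr_locus_effective} (which introduces the $r$-dependence) with sharp CD applied simultaneously to $s$ sets of controlled format and degree (which introduces the $s$-dependence) to verify that the total bounds are $\poly_{\cc{F},r}(D,s)$, and tracking through the arithmetic of Lemma \ref{lem:sharp_arithmetic} that the affine interpolation of sections of format/degree within our scale produces a family of morphisms within our scale. No substantial geometric difficulty appears beyond these estimates.
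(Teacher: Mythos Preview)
Your proposal is correct and takes essentially the same route as the paper: the paper simply invokes $FSm_{k+1}$ from Proposition~\ref{prop:smoothing} on the fort $I^{k+1}$ with the functions $(\lambda,x)\mapsto f_{j,\lambda}(x)$ to obtain a natural $C^r$-morphism $\varphi':\fort{K}^{k+1}\to I^{k+1}$, then sets $\varphi=\varphi'_{1\dots k}$, $\fort{K}^1_{\cc{P}}=\fort{K}^{k+1}(\cc{P})$, and $\phi^\lambda=\varphi'_\lambda$, citing Proposition~\ref{prop:structure} for combinatorial equivalence. Your direct appeal to a fibrewise version of Proposition~\ref{prop:Cr_locus_effective} followed by \s CD on $I^{k+1}$ is exactly what $FSm$ unwinds to in this one-dimensional-fibre situation, so the two arguments coincide once unpacked.
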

\begin{proof}
By Proposition \ref{prop:smoothing} there exists a natural morphism $\varphi':\fort{K}^{k+1}\to I^{k+1}$ such that for every $\cc{P}'\in C(\fort{K}^{k+1})$ and every $1\leq j\leq s$, the pullbacks $(\phi'|_{\cc{P}'})^{*}f_{j,\lambda}(x)$ are $C^{r}$ as functions of $(\lambda,x)$. Denote $\fort{K}^{k}=\pi_{k}(\fort{K}^{k+1})$, $\varphi=\varphi'_{1\dots k}$ and for every cell $\cc{P}\in C(\fort{K}^{k})$ define $\fort{K}^{1}_{\cc{P}}:=\fort{K}^{k+1}(\cc{P})$. Then for every $\lambda\in\varphi(\cc{P})$ the morphism $\varphi'$ induces a morphism $\phi^{\lambda}:=\varphi'_{\lambda}:\fort{K}^{1}_{\cc{P}}\to I$, and these morphisms are combinatorially equivalent by Proposition \ref{prop:structure}, this finishes the proof.
\end{proof}
The following lemma is a family version of the original argument by Gromov in \cite{Gromov}.
\begin{lem}
\label{lem:Gromov_lemma_for_forts}
Suppose $r\geq 2$, and
let $\{f_{1,\lambda}:I\to I\}_{_\lambda\in I^{k}},\dots,\{f_{s,\lambda}:I\to I\}_{_\lambda\in I^{k}}$ be $(\cc{F},D)$-families of $(r-1)$-functions. Then there exists\begin{itemize}
    \item A natural morphism $\varphi:\fort{K}^{k}\to I^{k}$ of format $\rformat$,degree $\rcomp$ where $|C(\fort{K}^{k})|=\poly_{\max\{\cc{F},r\}}(1)(D,s)$,
    \item For every $\cc{P}\in C(\fort{K}^{k})$ a fort $\fort{K}^{1}_{\cc{P}}$ with $|C(\fort{K}^{1}_{\cc{P}})|=\poly_{\cc{F},r}(D,s)$, and an $(\rformat,\rcomp)$-family $\{\phi^{\lambda}:\fort{K}_{\cc{P}}\to I\}_{\lambda\in\varphi(\cc{P})}$ of combinatorially equivalent $r$-morphisms,
\end{itemize}
such that for every $\C\in C(\fort{K}_{\cc{P}})$ and every $\lambda\in\varphi(\cc{P})$ the pullbacks $(\phi^{\lambda}|_{\C})^{*}f_{i,\lambda}$ are $r$-functions. 
\begin{proof}
Note that the $\phi^{\lambda}$ obtained in Lemma \ref{lem:smoothing_a_family} are natural, and this means that up to a linear division of order $d=O_{r}(1)$, by Lemma \ref{lem:smoothing_a_family}, the tower construction and Remark \ref{rem:legal_reduction}, we can assume that the functions $f_{i,\lambda}$ are $C^{r+1}$-functions.\\

Let the coordinates of $I^{k+1}$ be $(\lambda,x)$ and let $\varphi':\fort{K}^{k+1}\to I^{k+1}$ be a natural morphism compatible with the sets $\{f^{(r+1)}_{j,\lambda}=0\}$ and $\{f^{(r)}_{j,\lambda}=0\}$ for every $1\leq j\leq s$. Denote $\fort{K}^{k}=\pi_{k}(\fort{K}^{k+1})$, $\varphi=\varphi'_{1\dots k}$ and for every $\cc{P}\in C(\fort{K}^{k})$ define $\fort{K}^{1}_{\cc{P}}:=\fort{K}^{k+1}(\cc{P})$. For every $\lambda\in\varphi(\cc{P})$ the morphism $\varphi'$ induces a morphism $\phi^{\lambda}:=\varphi'_{\lambda}:\fort{K}^{1}_{\cc{P}}\to I$, and these morphisms are combinatorially equivalent by Proposition \ref{prop:structure}. Since $\phi^{\lambda}$ are natural, up to a linear subdivision of order $d=O_{r}(1)$, the tower construction and Remark \ref{rem:legal_reduction}, we have reduced to the case where the family $\{f^{(r)}_{j,\lambda}\}_{\lambda\in I^{k}}$ is either a family of monotone increasing functions or a family of monotone decreasing functions (or a family of constant functions, in which case we are done) of constant sign for every $1\leq j\leq s$. Moreover, since an $r$-parametrization of $f$ is the same as an $r$-parametrization of $-f$, we may additionally assume that $f^{(r)}_{i,\lambda}(x)\geq 0$ for all $x$ and all $\lambda$.\\

Define $\phi:I\to I$ by $x\mapsto 3x^{2}-2x^{3}$. It is an $r$-morphism up to linear subdivision of order $3$. If $f^{(r)}_{i,\lambda}$ is monotone decreasing, then \begin{equation}
\label{equation:decreasing_case}
\frac{2}{x}\geq\frac{f_{i,\lambda}^{(r-1)}(x)-f_{i,\lambda}^{(r-1)}(0)}{x}=f^{(r)}_{i,\lambda}(c_{x})\geq f^{(r)}_{i,\lambda}(x). 
\end{equation}
If $f^{(r)}_{i,\lambda}$ is monotone increasing, then 
\begin{equation}
\label{equation:increasing_case}
    \frac{2}{1-x}\geq\frac{f^{(r-1)}_{i,\lambda}(1)-f^{(r-1)}_{i,\lambda}(x)}{x}=f_{i,\lambda}^{(r)}(c_{x})\geq f^{(r)}_{i,\lambda}(x).
\end{equation}
We have \begin{equation}
    (f_{i,\lambda}(\phi(x)))^{(r)}=O_{r}(1)+O_{r}\left((\phi')^{r}f^{(r)}_{i,\lambda}(\phi(x))\right)
\end{equation} Note that $\phi'=6x(1-x)$.
If $f^{(r)}_{i,\lambda}$ is monotone decreasing then it is bounded near 1 by equation \ref{equation:decreasing_case}, and so $(\phi')^{r}f^{(r)}_{i,\lambda}(\phi(x))=O_{r}(1)$ near 1. Near $0$ we have $(\phi')^{r}f^{(r)}_{i,\lambda}(\phi(x))=O_{r}(x^{r-2})$ by equation \ref{equation:decreasing_case} and since $r\geq 2$ we get overall that $(f_{i,\lambda}(\phi(x)))^{(r)}=O_{r}(1)$. If $f^{(r)}_{i,\lambda}$ is monotone increasing then it is bounded near 0 by equation \ref{equation:increasing_case}, and so $(\phi')^{r}f^{(r)}_{i,\lambda}(\phi(x))=O_{r}(1)$ near $0$. Near $1$ we have $(\phi')^{r}f^{(r)}_{i,\lambda}(\phi(x))=O_{r}((1-x)^{r-2})$ by equation \ref{equation:increasing_case}, and again since $r\geq 2$ we have $(\phi')^{r}f^{(r)}_{i,\lambda}(\phi(x))=O_{r}(1)$. Thus we are done by linear subdivision of order $O_{r}(1)$.
\end{proof}
\end{lem}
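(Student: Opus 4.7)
The plan is to follow Gromov's classical endpoint-absorption argument, upgraded to the family setting and translated into the fort framework. The input is a family of $(r-1)$-functions, and the goal is, after a cellular decomposition of $I^{k}$ into combinatorially uniform strata, to produce an $r$-parametrization that pulls back every $f_{i,\lambda}$ to an $r$-function.

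First, I would smooth the data. Applying Lemma \ref{lem:smoothing_a_family} to order $r+1$ rather than $r$, and using Remark \ref{rem:legal_reduction}, one obtains a preliminary natural morphism $\varphi : \fort{K}^{k} \to I^{k}$ together with fiberwise morphisms on each stratum under which every pullback of $f_{i,\lambda}$ becomes $C^{r+1}$ jointly in $(\lambda,x)$. A linear subdivision of order $O_{r}(1)$ restores the $(r-1)$-function bound that smoothing may have degraded by a constant factor, so one may treat each $f_{i,\lambda}$ as a $C^{r+1}$ function which is simultaneously an $(r-1)$-function.

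Second, I refine the decomposition so that the sign and monotonicity of $f^{(r)}_{i,\lambda}$ are uniformly controlled on each fiber cell. Applying Proposition \ref{prop:cyl_dec_forts} to $I^{k+1}$ compatibly with the zero sets $\{f^{(r)}_{i,\lambda}=0\}$ and $\{f^{(r+1)}_{i,\lambda}=0\}$ and then projecting onto the parameter coordinates gives the desired refinement; Proposition \ref{prop:structure} guarantees that the induced fiberwise morphisms over a common base cell are combinatorially equivalent. Since an $r$-parametrization of $f$ is the same as one of $-f$, we may assume $f^{(r)}_{i,\lambda}\geq 0$ on each stratum, and on each such stratum $f^{(r)}_{i,\lambda}$ is either identically zero (in which case the pullback is already an $r$-function by Taylor expansion) or strictly monotone of constant sign.

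The final and essential analytic step is to post-compose each fiberwise morphism with the Gromov substitution $\phi(x)=3x^{2}-2x^{3}$, whose derivative $\phi'(x)=6x(1-x)$ vanishes to first order at each endpoint. The chain rule gives
\begin{equation*}
(f_{i,\lambda}\circ\phi)^{(r)}(x) = O_{r}(1) + O_{r}\!\left((\phi'(x))^{r}\,f^{(r)}_{i,\lambda}(\phi(x))\right),
\end{equation*}
so everything reduces to controlling the size of $f^{(r)}_{i,\lambda}$ near the endpoint at which it may blow up. The mean value theorem applied to $f^{(r-1)}_{i,\lambda}$, combined with the $(r-1)$-function bound, gives $f^{(r)}_{i,\lambda}(y)=O_{r}(1/y)$ in the decreasing case and $f^{(r)}_{i,\lambda}(y)=O_{r}(1/(1-y))$ in the increasing case, with the opposite endpoint bounded. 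Matching this singularity against the order-$r$ vanishing of $(\phi')^{r}$ at the relevant endpoint yields $(\phi')^{r}\,f^{(r)}_{i,\lambda}(\phi(\cdot))=O_{r}(x^{r-2})$ or $O_{r}((1-x)^{r-2})$, each bounded precisely because $r\geq 2$. This one-derivative margin is the main obstacle: the endpoint singularity of the $r$'th derivative of an $(r-1)$-function is only barely absorbed by a single quadratic substitution, and the hypothesis $r\geq 2$ cannot be dropped in this argument. The bookkeeping for formats, degrees and fort sizes is then routine via the tower construction (Proposition \ref{prop:tower_construction}) and Lemma \ref{lem:sharp_arithmetic}.
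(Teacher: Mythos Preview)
Your proposal is correct and follows essentially the same route as the paper's proof: smooth to $C^{r+1}$ via Lemma~\ref{lem:smoothing_a_family}, decompose $I^{k+1}$ compatibly with the zero loci of $f^{(r)}_{i,\lambda}$ and $f^{(r+1)}_{i,\lambda}$ to force monotonicity and constant sign of $f^{(r)}_{i,\lambda}$ on each fiber cell, then apply the Gromov substitution $\phi(x)=3x^{2}-2x^{3}$ and bound the critical term $(\phi')^{r}f^{(r)}_{i,\lambda}(\phi(x))$ using the mean value estimate on $f^{(r-1)}_{i,\lambda}$, exploiting $r\ge 2$ exactly as you describe. The paper's argument is organized identically, with the same substitution, the same endpoint bounds, and the same final linear subdivision of order $O_{r}(1)$.
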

\begin{proof}[Proof of $F_{1,k}$] 
The idea of the proof is as follows. By Lemma \ref{lem:Gromov_lemma_for_forts} above, it is sufficient to reduce to the case where the functions $f_{j,\lambda}$ are $1$-functions. Suppose we have definable $C^{r}$ functions $f_1,\dots,f_{s}:I\to I$, and suppose that $|f'_{1}|<\dots<|f'_{s}|$, and that either $|f'_{s}|\geq1$ or $|f'_{s}|\leq1$ globally on $I$. If $|f'_{s}|\leq 1$, we are done. If $|f'_{s}|\geq1$, we can reparametrize $I$ by $f^{-1}_{s}$, and then we will be done. We will now reduce to one of these two cases. \\ 

By the tower construction we may assume that $\F^{1}=I$. By Lemma \ref{lem:smoothing_a_family} we may further assume that the functions $f_{i,\lambda}(x)$ are in $C^{2}$. Now define 
\begin{equation}
\begin{split}
       g_{ij}(\lambda,x)&:=|f'_{i,\lambda}(x)|-|f'_{j,\lambda}(x)|,\\
       h_{i}(\lambda,x)&:=|f'_{i,\lambda}(x)|-1.
\end{split}
\end{equation}
Let the coordinates of $I^{k+1}$ be $(\lambda,x)$ and let $\varphi':\fort{K}^{k+1}\to I^{k+1}$ be a natural morphism compatible with the sets $\{g_{ij}=0\},\{h_{i}=0\}$ and $\{f'_{j,\lambda}(x)=0\}$ for every $1\leq i,j\leq s$. Denote $\varphi=\varphi'_{1\dots k}$, and for every $\cc{P}\in C(\fort{K}^{k})$ define $\fort{K}^{1}_{\cc{P}}:=\fort{K}^{k+1}(\cc{P})$. For every $\lambda\in\varphi(\cc{P})$ define $\Phi^{\lambda}_{\cc{P}}=\{\varphi_{\lambda}(\C):\;\C\in C(\fort{K}^{1}_{\cc{P}})\}$, a cylindrical decomposition of $I$ of size $\poly_{\cc{F}}(D,s)$.\\

We aim to construct a family of $r$-morphisms $\{\phi^{\lambda}:\fort{K}^{1}_{\cc{P}}\to I\}_{\lambda\in\varphi(\cc{P})}$ which will be a family of cylindrical $r$-parametrizations of $\Phi^{\lambda}_{\cc{P}}$, such that for every $\lambda\in\varphi(\cc{P})$, every $1\leq j\leq s$ and every $\C\in C(\fort{K}^{1}_{\cc{P}})$ the pullback $(\phi^{\lambda}|_{\C})^{*}f_{j,\lambda}$ is an $1$-function. To do this, for every interval $L_{\lambda}\in\Phi^{\lambda}$ we define a monotone increasing surjective $r$-map $\phi_{L_{\lambda}}:I\to L_{\lambda}$ such that for every $1\leq j\leq s$ the pullback $(\phi_{L_{\lambda}})^{*}f_{j,\lambda}|_{L_{\lambda}}$ is an $1$-function.\\

Fix $\cc{P}\in C(\fort{K}^{k})$, a $\lambda\in\varphi(\cc{P})$ and an interval $L_{\lambda}\in\Phi^{\lambda}_{\cc{P}}$. Since $\varphi'$ is compatible with $f'_{j,\lambda}$, on $L_{\lambda}$ some of the functions $f_{j,\lambda}$ are constant (and their indices $j$ are constant as $\lambda$ ranges over $\varphi(\cc{P})$) and those that aren't constant are strictly monotone. Constant functions are automatically $r$-functions, so we may assume that none of the $f_{j,\lambda}$ are constant. Since $\varphi'$ is compatible with $g_{ij}$, we can relabel the indices $j$ so that $|f'_{1,\lambda}(x)|<\dots<|f'_{s,\lambda}(x)|$ for every $\lambda\in\varphi(\cc{P})$ and every $x\in L_{\lambda}$. Since $\varphi'$ is compatible with $h_{j}$, we have that either $|f'_{s,\lambda}(x)|\leq 1$ for all $\lambda\in\varphi(\cc{P}),\;x\in L_{\lambda}$, or $|f'_{s,\lambda}(x)|>1$ for all $\lambda\in\varphi(\cc{P}),\;x\in L_{\lambda}$.\\

If $|f'_{s,\lambda}(x)|\leq1$, we simply parametrize the interval $L_{\lambda}$ by an affine map $\phi_{L_{\lambda}}:I\to L_{\lambda}$. Otherwise, suppose without loss of generality that $f_{s,\lambda}$ is monotone increasing. We parametrize $L_{\lambda}$ by the map $\phi_{L_{\lambda}}=f^{-1}_{s,\lambda}\circ A_{\lambda}:I\to L_{\lambda}$, where $A_{\lambda}:I\to f_{s,\lambda}(L_{\lambda})$ is an affine map. We claim that $|\phi_{L_{\lambda}}'|\leq 1$. Indeed, \begin{equation}
    |\phi'_{L_{\lambda}}(x)|=\frac{\length(f_{s,\lambda}(L_{\lambda}))}{|f'_{s,\lambda}(\phi_{L_{\lambda}}(x))|}\leq\frac{\length(I)}{1}=1.
\end{equation}
Finally, we also claim that $(\phi_{L_{\lambda}})^{*}f_{j,\lambda}$ is a $1$-function for every $1\leq j\leq s$. Indeed,\begin{equation}
    |((\phi_{L_{\lambda}})^{*}f_{j,\lambda})'(x)|=\length(f_{s,\lambda}(L_{\lambda}))\cdot\frac{|f'_{j,\lambda}(\phi_{L_{\lambda}}(x))|}{|f'_{s,\lambda}(\phi_{L_{\lambda}}(x))|}\leq 1
\end{equation} where the last inequality follows as $|f_{j,\lambda}'|<|f'_{s,\lambda}|$.
Thus we may assume that the functions $f_{j,\lambda}$ are $1$-functions. We are now in position to use Lemma \ref{lem:Gromov_lemma_for_forts} $r-1$ times, and we are done. 
\end{proof}
\section{The step \texorpdfstring{$S_{\ell-1,k}+F_{\ell-1,k}\to S_{\ell,k}$}{Lg}}
Let $\{\phi
^{\lambda}:\F^{\ell}\to\G^{\ell}\}_{\lambda\in I^{k}}$ be an $(\cc{F},D)$-family of natural morphisms. Fix a cell $\C\in\ C(\pi_{\ell-1}(\F^{\ell}))$ and a cell $\C'\in C(\F^{\ell})$ with $\pi_{\ell-1}(\C')=\C$.  Since $\phi^{\lambda}$ is natural,  if $\textnormal{type}(\C')=(\dots,1)$ then there are definable functions $f_{\C',\lambda},g_{\C',\lambda}$ on $\C$ such that \begin{equation}
    \phi^{\lambda}_{\ell}|_{\C'}=(1-x_{\ell})f_{\C',\lambda}(x_1,\dots,x_{\ell-1})+x_{\ell}g_{\C',\lambda}(x_1,\dots,x_{\ell-1}),
\end{equation}
and if $\textnormal{type}(\C')=(\dots,0)$ then $\phi^{\lambda}_{\ell}$ can be identified with a function on $\C$, and we maintain the notation $\phi^{\lambda}_{\ell}$.
Define \begin{equation}
\begin{split}
    F^{1}_{\C,\lambda}&:=\{f_{\C',\lambda},g_{\C',\lambda}:\C'\in C(\F^{\ell}),\;\pi_{\ell-1}(\C')=\C,\textnormal{type}(\C')=(\cdot,1)\},\\
    F^{2}_{\C,\lambda}&:=\{\phi^{\lambda}_{\ell}|_{\C'}:\C'\in C(\F^{\ell}),\;\pi_{\ell-1}(\C')=\C,\textnormal{type}(\C')=(\cdot,0)\},\\
    F^{3}_{\C,\lambda}&:=\{\phi^{\lambda}_{1}|_{\C},\dots,\phi^{\lambda}_{\ell-1}|_{\C}\},\\
    F_{\C,\lambda}&:=F^{1}_{\C,\lambda}\cup F^{2}_{\C,\lambda}\cup F^{3}_{\C,\lambda}.
\end{split}
\end{equation}
Lemma \ref{lem:sharp_arithmetic} implies that for every $\C'\in C(\F^{\ell})$ with $\pi_{\ell-1}(\C')=\C$ and $\textnormal{type}(\C')=(\cdot,1)$, the families $\{f_{\C',\lambda}\}_{\lambda\in I^{k}}$ and $\{g_{\C',\lambda}\}_{\lambda\in I^{k}}$ are $(\format,\comp)$-families. Now use $F_{\ell-1,k}$ on the fort $\F^{\ell-1}:=\pi_{\ell-1}(\F^{\ell})$ and the family of functions $F_{\C,\lambda}$.\\

We obtain a natural morphism $\varphi:\fort{K}^{k}\to I^{k}$, where $\fort{K}^{k}$ is a fort of size $|C(\fort{K}^{k})|=\poly_{\max\{\cc{F},r\}}(D,|C(\F^{\ell})|)$, for each cell $\cc{P}\in C(\fort{K}^{k})$ a fort $\fort{K}'_{\cc{P}}$ of size $\poly_{\max\{\cc{F},r\}}(D,|C(\F^{\ell})|)$ and an $(\rformat,\rcomp)$-family of combinatorially equivalent $r$-morphisms $\{\psi'^{\lambda}:\fort{K}'_{\cc{P}}\to\F^{\ell-1}\}_{\lambda\in\varphi(\cc{P})}$.\\ 

Since $F^{3}_{\C,\lambda}\subset F_{\C,\lambda}$ we see that $\phi^{\lambda}_{1...\ell-1}\circ\psi'^{\lambda}$ is an $r$-morphism. Define $\fort{K}_{\cc{P}}:=(\psi'^{\lambda})^{*}\F^{\ell}$. Since the $\psi'^{\lambda}$ are combinatorially equivalent, $\fort{K}_{\cc{P}}$ indeed does not depend on $\lambda$. $\psi'^{\lambda}$ extends to an $r$-morphism $\psi^{\lambda}:\fort{K}_{\cc{P}}\to\F^{\ell}$, and since $F^{1}_{\C,\lambda},F^{2}_{\C,\lambda}\subset F_{\C,\lambda}$ we see that $\phi^{\lambda}\circ\psi^{\lambda}$ is an $r$-morphism as needed.
\section{The step \texorpdfstring{$S_{\leq\ell,k}+F_{\leq\ell-1,k}\to F_{\ell,k}$}{Lg}}
We will need the following lemmas.
\begin{lem}
\label{lem:family_of_lipshitz}
Let $f:I^{\ell}\to I$ be a definable function of format $\cc{F}$ and degree $D$, and suppose that for all $x_{1}\in I$ the function $f(x_{1},\cdot)$ is $L$-Lipshitz. Then outside $\comp$ hyperplanes of the form $\{x_{1}=\textnormal{const}\}$ the function $f$ is continuous. 
\end{lem}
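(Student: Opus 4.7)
The plan is to show that the set $W := \pi_1(V) \subset I$ is finite of cardinality $\comp$, where $V \subset I^\ell$ is the set of points at which $f$ is discontinuous. By a standard quantifier-counting argument (cf.\ Proposition \ref{prop:Cr_locus_effective} with $r = 0$), $V$ is definable of codimension $\geq 1$, format $\format$, and degree $\comp$. Axiom (2) of \so-minimality then gives $W$ format $\format$ and degree $\comp$, so once I show $\dim W = 0$, axiom (1) bounds $|W| \leq \comp$, and the lemma follows by taking the hyperplanes $\{x_1 = c\}$ for $c \in W$.

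The crucial step is to show that for every $a_1 \in W$ the fiber $V_{a_1} := V \cap (\{a_1\} \times I^{\ell-1})$ has non-empty interior in $I^{\ell-1}$, and thus dimension $\ell - 1$. Fix $(a_1, y_0) \in V$. Since $f(a_1, \cdot)$ is $L$-Lipschitz, the triangle-inequality estimate
\[
|f(x_1, y) - f(a_1, y_0)| \leq L|y - y_0| + |f(x_1, y_0) - f(a_1, y_0)|
\]
shows that $f$ is continuous at $(a_1, y_0)$ iff the single-variable function $t \mapsto f(t, y_0)$ is continuous at $a_1$. The one-sided limits $f_\pm(a_1, y_0) := \lim_{t \to a_1^\pm} f(t, y_0)$ exist by o-minimality, and the discontinuity forces at least one to differ from $f(a_1, y_0)$; say $|f_+(a_1, y_0) - f(a_1, y_0)| = \epsilon > 0$. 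Passing $t \to a_1^+$ in the uniform bound $|f(t, y) - f(t, y')| \leq L|y - y'|$ shows that $y \mapsto f_+(a_1, y)$ is itself $L$-Lipschitz, hence the $2L$-Lipschitz function $y \mapsto f_+(a_1, y) - f(a_1, y)$ is bounded below by $\epsilon/2$ in absolute value throughout the $\epsilon/(4L)$-ball around $y_0$; this entire ball therefore lies in $V_{a_1}$.

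To conclude, take a cylindrical cell decomposition of $V$ via \s CD. Any cell $C \subset V$ whose $\pi_1$-projection is an interval has dimension at most $\ell - 1$, so its fiber above any point in that interval has dimension at most $\ell - 2$, contradicting $\dim V_{a_1} = \ell - 1$ established above. Hence every cell of $V$ projects to a single point, and $W$ is exactly the finite set of these projections. The main obstacle is the technical one of legitimizing, in the \so-minimal setting, the existence of the one-sided limits $f_\pm(a_1, \cdot)$ and their inheritance of the $L$-Lipschitz bound; once this Lipschitz-spreading of discontinuities to whole fiber-neighborhoods is in hand, the rest of the argument is a routine dimension count.
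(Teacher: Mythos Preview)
Your argument is correct and follows essentially the same route as the paper: show that the Lipschitz hypothesis forces the discontinuity locus to meet each hyperplane $\{x_1 = t\}$ in an open (hence $(\ell-1)$-dimensional) set, then use a cylindrical decomposition and a fiber-dimension count to conclude that no cell of the discontinuity set can project onto an interval in $x_1$. The only cosmetic difference is that the paper works directly with $\overline{\lim}_{y\to x}|f(y)-f(x)|$ and an elementary triangle-inequality estimate between two points in the same fiber, whereas you pass through the one-sided limits $f_\pm(a_1,\cdot)$; both devices encode the same Lipschitz-spreading mechanism, and the concluding dimension argument is identical.
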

\begin{proof}
  Consider the set $\textnormal{Var}:=\{x:\overline{\lim}_{y\to x}|f(y)-f(x)|>0\}$. Obviously, $f$ is continuous on the set $I^{\ell}\backslash \textnormal{Var}$. We claim that for every $t\in I$, the intersection $\textnormal{Var}\cap\{x_{1}=t\}$ is open in the hyperplane $\{x_{1}=t\}$. Indeed, suppose that for some $x\in\{x_{1}=t\}$
  \begin{equation}\label{eq:eps definition blal}
  \epsilon=\overline{\lim}_{y\to x}|f(y)-f(x)|>0.
  \end{equation} 
  If $x'\in\{x_{1}=t\}$ and $|x-x'|<\frac{\epsilon}{4L}$ then \begin{equation}
  \begin{split}
      |f(y)-f(x')|&\geq |f(y-x'+x)-f(x)|-|f(x')-f(x)|-|f(y-x'+x)-f(y)|\geq\\
                    & \geq|f(y-x'+x)-f(x)|-\frac{\epsilon}{2},
  \end{split}
  \end{equation}
  since $y-x'+x,y$ also have the same $x_{1}$ coordinate. 
  Considering the $\overline{\lim}$ of both sides of the above equation as $y\to x'$, we conclude that $\overline{\lim}_{y\to x'}|f(y)-f(x')|\geq\frac{\epsilon}{2}>0$.\\ \\
 As $f$ is definable, there is a cylindrical decomposition $\Phi$ of $I^{\ell}$ where $|\Phi|=\comp$ such that $f$ is continuous on the cells of $\Phi$. Clearly, $\textnormal{Var}$ can only intersect the cells of $\Phi$ that are of dimension $\leq\ell-1$. Let $X\in\Phi$ such that $X\cap \textnormal{Var}\neq\emptyset$. We claim that $X$ necessarily has type $(0,\dots)$. Indeed, assume that  $\textnormal{type}(X)=(1,\dots)$. Then for any $t\in I$ the intersection $X\cap\{x_{1}=t\}$ has dimension $\leq\ell-2$. Let $t\in\pi_{1}(X)$. On the one hand, $\textnormal{Var}\cap \{x_{1}=t\}$ has dimension $\ell-1$ by the above. On the other hand, $\textnormal{Var}\cap \{x_{1}=t\}$ lies in the finite union of sets  $X_i\cap\{x_{1}=t\}$, where $X_i\in\Phi$ are the cells intersecting $\textnormal{Var}$ of type $(1,\dots)$  (as projections on $x_1$ of cells of $\Phi$ of type $(0,\dots)$ and cells of $\Phi$  of type $(1,\dots)$ do not intersect). Thus $\textnormal{Var}\cap \{x_1=t\}$ is of dimension $\leq\ell-2$, and this is a contradiction. 
 
 In conclusion, $\textnormal{Var}$ can only intersect cells of type $(0,\dots)$ and is therefore contained in $\comp$ hyperplanes $\{x_{1}=\textnormal{const}\}$, as needed.
\end{proof}
The following is a family version of Lemma \ref{lem:family_of_lipshitz}.
\begin{lem}
\label{lem:family_of_lipshitz_family_version}
Let $\{f_{\lambda}:I^{\ell}\to I\}_{\lambda\in I^{k}}$ be an $(\cc{F},D)$ family, and suppose that for every $x_{1}\in I,\;\lambda\in I^{k}$ the function $f_{\lambda}(x_{1},\cdot)$ is $L$-Lipshitz.Then there exists: \begin{itemize}
    \item A natural morphism $\varphi:\fort{K}^{k}\to I^{k}$ of format $\format$, degree $\comp$ where $|C(\fort{K}^{k})|=\comp$,
    \item for every $\cc{P}\in C(\fort{K}^{k})$ a fort $\fort{K}^{1}_{\cc{P}}$ of size $\comp$ and an $(\format,\comp)$-family $\{\phi'^{\lambda}:\fort{K}^{1}_{\cc{P}}\to I\}_{\lambda\in\varphi(\cc{P})}$ of natural morphisms,
\end{itemize}
such that if $\phi^{\lambda}$ is the extension of $\phi'^{\lambda}$ to $(\phi'^{\lambda})^{*}I^{\ell}$, then for every $\C\in C((\phi'^{\lambda})^{*}I^{\ell})$, the pullback $(\phi^{\lambda}|_{\C})^{*}f_{\lambda}$ is continuous.
\end{lem}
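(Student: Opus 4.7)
My approach is to apply the pointwise Lemma \ref{lem:family_of_lipshitz} to each $f_\lambda$ separately, and then to use sharp cylindrical decomposition in the $(k+1)$-dimensional $(\lambda, x_1)$-space to handle the resulting finite ``bad'' sets uniformly. Concretely, I would consider the discontinuity locus of the total function on $I^{k+\ell}$ and project it to $(\lambda, x_1)$-space to define
\[E := \{(\lambda, x_1) \in I^{k+1} : f_\lambda \text{ is not continuous at some point of } \{x_1\} \times I^{\ell-1}\}.\]
By applying \s CD to the graph of the total space to localize the non-continuity locus inside lower-dimensional cells, and invoking sharpness of projection and arithmetic (Lemma \ref{lem:sharp_arithmetic}), the set $E$ has format $O_\mathcal{F}(1)$ and degree $\poly_\mathcal{F}(D)$. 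Moreover, Lemma \ref{lem:family_of_lipshitz} applied pointwise in $\lambda$ guarantees that each fiber $E_\lambda$ is a finite set of size at most $\poly_\mathcal{F}(D)$.

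Next I would apply Proposition \ref{prop:cyl_dec_forts} to the fort $I^{k+1}$ with the single compatible set $E$, obtaining a natural morphism $\varphi' : \fort{K}^{k+1} \to I^{k+1}$ compatible with $E$, of size $\poly_\mathcal{F}(D)$ and with the required format and degree bounds. I would then define $\fort{K}^k := \pi_k(\fort{K}^{k+1})$, set $\varphi := \varphi'_{1\dots k}$, and for each $\cc{P} \in C(\fort{K}^k)$ take $\fort{K}^1_{\cc{P}} := \fort{K}^{k+1}(\cc{P})$. For every $\lambda \in \varphi(\cc{P})$ the fiber morphism $\phi'^\lambda := \varphi'_\lambda : \fort{K}^1_{\cc{P}} \to I$ (Example \ref{ex:fibre_morphism}) is natural because $\varphi'$ is, and the family is combinatorially equivalent by Proposition \ref{prop:structure}. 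The format, degree, and size bounds transfer directly from those of $\varphi'$.

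Finally I would verify continuity of the pullback on every integer cell of $(\phi'^\lambda)^* I^\ell$. Such cells have the form $\C' \times (0,1)^{\ell-1}$ for $\C' \in C(\fort{K}^1_{\cc{P}})$, and $\phi^\lambda$ acts on them by $(x_1, x_2, \dots, x_\ell) \mapsto (\phi'^\lambda(x_1), x_2, \dots, x_\ell)$. If $\C'$ has type $(0)$, then $\phi'^\lambda(\C')$ is a single point $\{c\}$, so the pullback equals $(x_2, \dots, x_\ell) \mapsto f_\lambda(c, x_2, \dots, x_\ell)$, which is continuous because $f_\lambda(c, \cdot)$ is $L$-Lipschitz by hypothesis. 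If $\C'$ has type $(1)$, then $\phi'^\lambda(\C')$ is an open interval disjoint from $E_\lambda$ by compatibility of $\varphi'$ with $E$, so $f_\lambda$ is continuous on $\phi'^\lambda(\C') \times I^{\ell-1}$ by Lemma \ref{lem:family_of_lipshitz}, and continuity is preserved under pulling back along the continuous map $\phi^\lambda|_{\C' \times (0,1)^{\ell-1}}$. The main technical subtlety is the format/degree bound on $E$; this relies on the observation that a cylindrical decomposition of the graph of the total function identifies the non-continuity locus as a union of cells of codimension $\geq 1$, whose projection onto $(\lambda, x_1)$-space inherits sharp format and degree bounds from the sharpness axioms.
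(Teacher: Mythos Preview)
Your proof is correct and follows essentially the same route as the paper's. The only difference is cosmetic: the paper defines the bad set via the explicit uniform-oscillation function $d(\lambda,x_1,x'_1):=\sup_{y}|f_\lambda(x_1,y)-f_\lambda(x'_1,y)|$ and sets $\Sigma_\lambda:=\{x_1:\overline{\lim}_{x'_1\to x_1}d(\lambda,x'_1,x_1)>0\}$, whereas you take the projection of the discontinuity locus; both are definable with format $O_{\cc F}(1)$ and degree $\poly_{\cc F}(D)$, and from that point on (Proposition~\ref{prop:cyl_dec_forts} on $I^{k+1}$, passing to $\fort{K}^k=\pi_k(\fort{K}^{k+1})$, $\fort{K}^1_{\cc P}=\fort{K}^{k+1}(\cc P)$, $\phi'^{\lambda}=\varphi'_\lambda$) the two arguments are identical.
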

\begin{proof}
Let \begin{equation}
    \begin{split}
        d(\lambda,x_{1},x'_{1})&:=\sup_{y\in I^{\ell-1}}|f(x_{1},y)-f(x'_{1},y)|,\\
        \Sigma_{\lambda}&:=\{x_{1}:\overline{\lim}_{x'_{1}\to x_{1}}d(\lambda,x'_{1},x_{1})>0\}.
    \end{split}
\end{equation}
By Lemma \ref{lem:family_of_lipshitz}, for each fixed $\lambda\in I^{k}$, the function $f_{\lambda}$ is continuous outside the hyperplanes $\{x_{1}=c\}$ where $c\in\Sigma_{\lambda}$. We use Proposition \ref{prop:cyl_dec_forts} on the fort $I^{k+1}$ (with coordinates $(\lambda,x_{1})$) and the set $\Sigma:=\{(\lambda,x_{1}):x_{1}\in\Sigma_{\lambda}\}$ to obtain a natural morphism $\varphi':\fort{K}^{k+1}\to I^{k+1}$ compatible with $\Sigma$. \\ 

Define $\fort{K}^{k}:=\pi_{k}(\fort{K}^{k+1}),\;\varphi:=\varphi'_{1\dots k}$, and for every $\C\in C(\fort{K}^{k})$ let $\fort{K}^{1}_{\cc{P}}:=\fort{K}^{k+1}(\cc{P})$, and $\phi'^{\lambda}:=\varphi'_{\lambda}$. It is easy to see that this construction satisfies the requirements of the lemma.
\end{proof}
The following is a version of Lemma 12 from \cite{ALR} for sharp structures, which is a key lemma in this subsection.
\begin{lem}
\label{lem:boundedness_of_derivatives}
Let $f:I^{\ell}\to I$ be a definable $C^1$ function of format $\cc{F}$ and degree $D$, and suppose that for any $2\leq i\leq\ell$ one has $|\frac{\partial}{\partial x_i}f|\leq 1$. Then the function $\frac{\partial}{\partial x_{1}}f(x_1,\cdot)$ is bounded for all but $\comp$ values of $x_{1}$.
\end{lem}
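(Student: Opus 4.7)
The plan is to show that the bad set $W:=\{x_1\in I:\sup_y|\partial_1 f(x_1,y)|=\infty\}$ is finite with $\comp$ elements. The set $W$ is definable of format $\format$ and degree $\comp$, so by sharp o-minimality it has at most $\comp$ connected components. It therefore suffices to rule out that $W$ contains any open subinterval of $I$.

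Suppose for contradiction that $(a,b)\subset W$. For each $N\in\bb{N}$ the set $U_N:=\{(x_1,y):|\partial_1 f(x_1,y)|>N\}$ is definable of format $\format$ and degree $\comp$, with bounds \emph{independent of $N$}, since $N$ enters only as a single real constant and in a sharply o-minimal structure this contributes only $O(1)$ to format and degree. Its $x_1$-projection contains $(a,b)$, so Proposition \ref{prop:Effective_definable_choice} furnishes a definable section $y_N:(a,b)\to I^{\ell-1}$ of format $\format$ and degree $\comp$ -- still uniformly in $N$ -- along which $|\partial_1 f(x_1,y_N(x_1))|>N$.

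The key step is to balance this pointwise lower bound against the integral oscillation of $h_N(x_1):=f(x_1,y_N(x_1))$, which takes values in $I$. Sharp monotonicity cuts $(a,b)$ into $\comp$ subintervals on each of which $y_{N,1},\dots,y_{N,\ell-1}$ are $C^1$ and monotone, and then $h_N$ is $C^1$ there because $f$ is; a further $\comp$-subdivision arranges that $h_N$ is also monotone on each piece. On any such piece the chain rule combined with $|\partial_{i+1}f|\leq 1$ yields
\[
|\partial_1 f(x_1,y_N(x_1))|\leq |h_N'(x_1)|+\sum_{i=1}^{\ell-1}|y_{N,i}'(x_1)|.
\]
Integrating piecewise and using that $h_N$ and each $y_{N,i}$ are bounded in $I$ and monotone on each subinterval -- so their integrated absolute derivatives add up to at most $\comp$ over all of $(a,b)$ -- one obtains $\int_a^b|\partial_1 f(x_1,y_N(x_1))|\,dx_1\leq \comp$ with a bound independent of $N$. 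But the pointwise bound gives $\int_a^b|\partial_1 f(x_1,y_N(x_1))|\,dx_1>N(b-a)$, forcing $N(b-a)<\comp$, which is absurd for $N$ larger than $\comp/(b-a)$. Hence $W$ contains no interval, so $|W|=\comp$.

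The only step that needs real care is verifying that both the section $y_N$ and its monotone-piece decomposition have format, degree and piece count independent of $N$; this is precisely what sharp o-minimality buys us, since the constant $N$ affects only $O(1)$ of the format and degree of $U_N$ and hence, through sharp definable choice and the sharp monotonicity bound, of everything downstream.
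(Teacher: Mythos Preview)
Your argument is correct and self-contained, but it takes a genuinely different route from the paper's proof. The paper defines the same bad set $B=W$, observes that it has format $\format$ and degree $\comp$ by the \so-minimal axioms and Remark~\ref{rem:format_degree_of_derivatives}, and then simply invokes \cite[Lemma~12]{ALR} (the non-sharp, purely o-minimal version of this lemma) to conclude that $B$ is finite; the bound $|B|=\comp$ then drops out from axiom~(1). In contrast, you re-prove the finiteness from scratch: you pick a uniform definable section $y_N$ through the large-derivative locus, bound $\int_a^b|\partial_1 f(x_1,y_N(x_1))|\,dx_1$ above by the total variation of $h_N$ and the $y_{N,i}$ over a $\comp$-piece monotone decomposition, and contrast this with the pointwise lower bound $N$. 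This is essentially the mechanism behind \cite[Lemma~12]{ALR}, now with sharp bookkeeping threaded through. The paper's route is shorter given the cited reference; yours has the advantage of being self-contained and of making transparent why the hypothesis $|\partial_i f|\le 1$ for $i\ge 2$ is exactly what is needed, and it also shows directly that no appeal to the non-sharp literature is necessary.
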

\begin{proof}
  Consider the set \begin{equation}
 B:=\left\{x_{1}:\forall M>0\;\exists (x_{2},\dots,x_{\ell})\in I^{\ell-1}\;\left|\frac{\partial}{\partial x_{1}}f(x_{1},x_{2},\dots,x_{\ell})\right|>M\right\}
  \end{equation}
By the axioms of \so-minimal structures and Remark \ref{rem:format_degree_of_derivatives} the set $B$ has format $\format$ and degree $\comp$. By Lemma 12 from \cite{ALR}, the set $B$ is finite. Therefore it consists of $\comp$ points, as needed. 
\end{proof}
The following is a family version of Lemma \ref{lem:boundedness_of_derivatives}.
\begin{lem}
\label{lem:boundedness_of_derivatives_family_version}
Let $\{f_{\lambda}:I^{\ell}\}_{\lambda\in I^{k}}$ be an $(\cc{F},D)$-family of definable $C^{1}$ functions. Suppose that for every $2\leq i\leq s,\;\lambda\in I^{k}$ one has $|\frac{\partial}{\partial x_i}f|\leq 1$. Then there exists \begin{itemize}
    \item A natural morphism $\varphi:\fort{K}^{k}\to I^{k}$ of format $\format$, degree $\comp$ where $|C(\fort{K}^{k})|=\comp$,
    \item for every $\cc{P}\in C(\fort{K}^{k})$ a fort $\fort{K}^{1}_{\cc{P}}$ of size $\comp$ and an $(\format,\comp)$-family $\{\phi'^{\lambda}:\fort{K}^{1}_{\cc{P}}\to I\}_{\lambda\in\varphi(\cc{P})}$ of natural morphisms
\end{itemize}
such that if $\phi^{\lambda}$ is the extension of $\phi'^{\lambda}$ to $(\phi'^{\lambda})^{*}I^{\ell}$, then for every $\C\in C((\phi'^{\lambda})^{*}I^{\ell})$ and every fixed $x_{1}\in I$ the derivative  $\frac{\partial}{\partial x_{1}}\left((\phi^{\lambda}|_{\C})^{*}f_{\lambda}(x_{1},\cdot)\right)$ is bounded. 
\end{lem}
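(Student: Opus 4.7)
The plan is to mirror the proof of Lemma \ref{lem:family_of_lipshitz_family_version}, using Lemma \ref{lem:boundedness_of_derivatives} in place of Lemma \ref{lem:family_of_lipshitz} to cut out the relevant ``bad set'' inside $I^{k+1}$. For each $\lambda\in I^{k}$ that lemma produces a finite set
\begin{equation*}
B_{\lambda}:=\left\{x_{1}\in I:\;\tfrac{\partial}{\partial x_{1}}f_{\lambda}(x_{1},\cdot)\text{ is unbounded on }I^{\ell-1}\right\},\qquad |B_{\lambda}|=\comp,
\end{equation*}
and the strategy is to find a cylindrical decomposition of $I^{k+1}$ (with coordinates $(\lambda,x_{1})$) compatible with the total space $B:=\{(\lambda,x_{1}):x_{1}\in B_{\lambda}\}$, and then to read off the desired family of length-$1$ morphisms from its fiber structure over $\lambda$.

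First I would check that $B$ is a definable set of format $\format$ and degree $\comp$. By Remark \ref{rem:format_degree_of_derivatives} the family $\{\frac{\partial}{\partial x_{1}}f_{\lambda}\}_{\lambda\in I^{k}}$ has total space of format $\format$ and degree $\comp$, and $B$ is cut out from this by the unboundedness condition, which is expressible by a bounded combination of the sharp operations (projection, negation and intersection). I would then apply Proposition \ref{prop:cyl_dec_forts} to $I^{k+1}$ with the singleton family $\{B\}$ to obtain a natural morphism $\varphi':\fort{K}^{k+1}\to I^{k+1}$ compatible with $B$, of size $\comp$, whose restriction to each cell has format $\format$ and degree $\comp$. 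Setting $\fort{K}^{k}:=\pi_{k}(\fort{K}^{k+1})$, $\varphi:=\varphi'_{1\dots k}$ and, for each $\cc{P}\in C(\fort{K}^{k})$, $\fort{K}^{1}_{\cc{P}}:=\fort{K}^{k+1}(\cc{P})$ and $\phi'^{\lambda}:=\varphi'_{\lambda}$, Proposition \ref{prop:structure} guarantees that the $\phi'^{\lambda}$ form a combinatorially equivalent family as $\lambda$ ranges over $\varphi(\cc{P})$.

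To verify the pullback condition I would consider a cell $\C=\C_{1}\times I^{\ell-1}\in C((\phi'^{\lambda})^{*}I^{\ell})$ with $\C_{1}\in C(\fort{K}^{1}_{\cc{P}})$. If $\textnormal{type}(\C_{1})=(0)$ the pullback does not depend on $x_{1}$ and the derivative vanishes. If $\textnormal{type}(\C_{1})=(1)$, then $\cc{P}\times\C_{1}$ is a cell of $\fort{K}^{k+1}$ whose last-coordinate type is $1$; since $B$ is fiberwise finite, compatibility forces every cell contained in $B$ to have last-coordinate type $0$, hence $\phi'^{\lambda}(\C_{1})\cap B_{\lambda}=\emptyset$. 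Because $\phi'^{\lambda}$ is natural, $(\phi'^{\lambda})'$ is a constant on $\C_{1}$, so by the chain rule
\begin{equation*}
\frac{\partial}{\partial x_{1}}\left[(\phi^{\lambda}|_{\C})^{*}f_{\lambda}\right](x_{1},x_{2},\dots,x_{\ell}) = (\phi'^{\lambda})'(x_{1})\cdot\frac{\partial f_{\lambda}}{\partial x_{1}}\bigl(\phi'^{\lambda}(x_{1}),x_{2},\dots,x_{\ell}\bigr),
\end{equation*}
which is bounded in $(x_{2},\dots,x_{\ell})$ for each fixed $x_{1}\in\C_{1}$, as required. The main technical point is the bookkeeping of the format and degree of $B$ in the first step, i.e.\ expressing the unboundedness condition by a short chain of \so-minimal operations; this is essentially the sharp recasting of the argument used to prove Lemma \ref{lem:boundedness_of_derivatives} and should go through routinely.
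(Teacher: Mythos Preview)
Your proposal is correct and follows exactly the approach the paper takes: the paper's proof consists of the single sentence ``This lemma follows from Lemma \ref{lem:boundedness_of_derivatives} in the same way as Lemma \ref{lem:family_of_lipshitz_family_version} follows from Lemma \ref{lem:family_of_lipshitz},'' and your argument is precisely the unpacking of that sentence---defining the fiberwise-finite bad set $B\subset I^{k+1}$, applying Proposition \ref{prop:cyl_dec_forts}, and reading off $\fort{K}^{k}$, $\varphi$, $\fort{K}^{1}_{\cc{P}}$ and $\phi'^{\lambda}$ from the resulting natural morphism on $I^{k+1}$. Your verification of the pullback condition and the format/degree bookkeeping for $B$ are more explicit than what the paper supplies, but the content is the same.
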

\begin{proof}
This lemma follows from Lemma \ref{lem:boundedness_of_derivatives} in the same way as Lemma \ref{lem:family_of_lipshitz_family_version} follows from Lemma \ref{lem:family_of_lipshitz}. 
\end{proof}
\subsection{Reduction to the case where \texorpdfstring{$f_{\C,j,\lambda}$}{Lg}  is \texorpdfstring{$C^{r}$}{Lg} and \texorpdfstring{$f_{\C,j,\lambda}(x_1,\cdot)$}{Lg} is an \texorpdfstring{$r$}{Lg}-function for all \texorpdfstring{$x_1$}{Lg}.} 
Let $\F^{\ell}$ be a fort and for every $\C\in C(\F)$ let $F_{\C,\lambda}=\{f_{\C,j,\lambda}:\C\to I\}_{\lambda\in I^{k},j\in J}$ be a collection of $|J|$ families such that for every fixed $j\in J$ the family $\{f_{\C,j,\lambda}:\C\to I\}_{\lambda\in I^{k}}$ is an $(\cc{F},D)$-family of definable functions. By the tower construction and the induction hypothesis we may assume that $\F^{\ell}=I\times\F^{\ell-1}$. Let $\C\in C(\F^{\ell-1})$ and define $F_{\C,(\lambda,x_{1})}=\{f_{I\times\C,j,\lambda}(x_{1},\cdot):\;f_{I\times\C,j,\lambda}\in F_{I\times\C,\lambda}\}$. That is, we consider each of the target families of functions $\{f_{I\times\C,j,\lambda}\}$ as a family of functions on  cells of $\F^{\ell-1}$ with parameters $\lambda,x_{1}$.\\

We use $F_{\ell-1,k+1}$ on $\F^{\ell-1}$ and $F_{\C,(\lambda,x_{1})}$. We obtain a natural morphism $\varphi':\fort{K}^{k+1}\to I^{k+1}$ (where the coordinates of $I^{k+1}$ are $(\lambda,x_{1})$) and for each $\cc{P}'\in C(\fort{K}^{k+1})$ a family of combinatorially equivalent $r$-morphisms $\phi^{(\lambda,x_{1})}:\fort{K}_{\cc{P}'}\to\F^{\ell-1}$ such that for every $(\lambda,x_{1})\in\varphi'(\cc{P}')$ and every $j\in J$ the function $(\phi^{(\lambda,x_{1})}|_{\C'})^{*}f_{I\times\C,j,\lambda}(x_1,\cdot)$ is an $r$-function whenever $\C'\in C(\fort{K}_{\cc{P}'})$ and $\phi^{(\lambda,x_{1})}(\C')\subset\C$.\\ 

Define $\fort{K}^{k}:=\pi_{k}(\fort{K}^{k+1})$, and fix a cell $\cc{P}\in C(\fort{K}^{k})$. We in particular have a combinatorially equivalent family $\{\varphi^{\lambda}:\fort{K}^{k}(\cc{P})\to I\}_{\lambda\in\varphi'_{1\dots k}(\cc{P})}$ given by $\varphi^{\lambda}:=\phi'_{\lambda}$. By Proposition \ref{prop:structure} we can identify $C(\fort{K}^{k}(\cc{P}))$  with the cells of $\fort{K}^{k+1}$ lying above $\cc{P}$. Define $s:C(\fort{K}^{k}(\cc{P}))\to \textnormal{Forts}(\ell-1)$ by $s(\cc{P}'):=\fort{K}_{\cc{P}'}$ and  $\fort{K}_{\cc{P}}:=\fort{K}^{k}(\cc{P})\ltimes s$.\\

Finally, define $\phi^{\lambda}:\fort{K}_{\cc{P}}\to\F^{\ell}$ by $\phi^{\lambda}(x_1,\dots,x_{\ell}):=(\varphi^{\lambda}(x_{1}),\phi^{(\lambda,x_{1})}(x_{2},\dots,x_{\ell})).$ We would like to use Proposition \ref{prop:family_of_combinatorially_equivalent_morphisms_is_morphism} to assert that the $\phi^{\lambda}$ are morphisms, for as $x_{1}$ ranges over a cell of $\fort{K}^{k}(\cc{P})$, the morphisms $\phi^{\lambda}(x_{1},\cdot)$ are combinatorially equivalent. However, we do not know that $\phi^{(\lambda,x_{1})}$ are continuous as functions of $x_{1},\dots,x_{\ell}$. Still, we do know that for every $x_{1}$ the morphism $\phi^{\lambda}(x_{1},\cdot)$ is an $r$-morphism, and in particular $1$-Lipshitz. Therefore, by Lemma \ref{lem:family_of_lipshitz_family_version}, we may assume that $\phi^{(\lambda,x_{1})}$ is continuous as a function of $x_1,\dots,x_{\ell}$. And so, by Proposition \ref{prop:family_of_combinatorially_equivalent_morphisms_is_morphism} the map $\phi^{\lambda}$ is a morphism for every $\lambda\in\varphi'_{1\dots k}(\cc{P})$ . We have thus reduced to the case where for every $\lambda\in I^{k},\;x_{1}\in\pi_{1}(\F^{\ell}),j\in J$ and $\C\in C(\F^{\ell})$ the function $f_{\C,j,\lambda}(x_1,\cdot)$ is an $r$-function.\\ \\ 
The last goal of this subsection is to further reduce to the case where the functions $f_{\C,j,\lambda}$ are $C^{r}$. Due to Proposition \ref{prop:smoothing} we can find a natural morphism $\varphi:\fort{K}^{k}\to I^{k}$ and for each cell $\cc{P}\in C(\fort{K}^{k})$ a family of combinatorially equivalent $C^{r}$ morphisms $\{\phi^{\lambda}:\fort{K}_{\cc{P}}\to\F^{\ell}\}_{\lambda\in\varphi(\cc{P})}$ such that $(\phi^{\lambda}|_{\C'})^{*}f_{\C,j,\lambda}$ is $C^{r}$ for every $j\in J,\lambda\in\varphi(\cc{P})$ whenever $\C'\in C(\fort{K}_{\cc{P}})$ and $\phi^{\lambda}(\C')\subset\C$. By $S_{\ell,k}$ we may assume that $\phi^{\lambda}$ are $r$-morphisms. Note that by the chain rule, since the first coordinate of a cellular map depends only on $x_{1}$, we see that $||(\phi^{\lambda}|_{\C'})^{*}f_{\C,j,\lambda}(x_1,\cdot)||_{r}=O_{r,\ell}(1)$ and so up to a linear subdivision of order $d=O_{\ell,r}(1)$ the reduction to $C^{r}$ functions has not spoiled our first reduction to $r$-morphisms for every $x_{1}$.
\subsection{Induction on the first unbounded derivative}

Order the indices $\alpha\in\bb{N}^{\ell}$ by lexicographic order. Let $\alpha$ be the first index such that $|\alpha|\leq r$ and such that there exists $\C\in C(\F^{\ell})$, $j\in J$ and $\lambda\in I^{k}$ such that $||f_{\C,j,\lambda}||_{r}>1$. By the previous subsection $\alpha_{1}>0$. By Lemma \ref{lem:boundedness_of_derivatives_family_version}, the tower construction and the induction hypothesis, we may assume that $\F^{\ell}=I\times\F^{\ell-1}$ and for every $x_{1}\in I$, and every $\C,j,\lambda$ the function $f_{\C,j,\lambda}^{(\alpha)}(x_{1},\cdot)$ is bounded. \\

We shall now reparametrize $x_{1}$. For every $\C\in C(\F^{\ell})$ define \begin{equation}
    S_{\C,j,\lambda}:=\{|f^{(\alpha)}_{\C,j,\lambda}(x_1,\dots,x_{\ell})|\geq\frac{1}{2}\cdot\underset{x_{2\dots\ell}}{\sup}|f^{(\alpha)}_{\C,j,\lambda}(x_1,\cdot)|\}\subset\C
\end{equation}
By Proposition \ref{prop:Effective_definable_choice} there exists definable families of curves $\{\gamma^{\C,j,\lambda}:I\to S_{\C,j,\lambda}\}_{\lambda\in I^{k}}$ such that $\gamma^{\C,j,\lambda}_{1}(x_{1})=x_{1}$. Let 
\begin{equation}
\begin{split}
    F^{1}_{\lambda}&:=\{\gamma^{\C,j,\lambda}:\C\in C(\F^{\ell}),j\in J,\lambda\in I^{k}\},\\
    F^{2}_{\lambda}&:=\{f^{(\alpha)}_{\C,j,\lambda}\circ\gamma^{\C,j,\lambda}:\C\in C(\F^{\ell}),j\in J,\lambda\in I^{k}\},\\
    F_{\lambda}&:=F^{1}_{\lambda}\cup F^{2}_{\lambda}.
\end{split}
\end{equation} 
We apply $F_{1,k}$ to the fort $I$ and the functions in $F_{\lambda}$. We obtain a natural morphism $\varphi:\fort{K}^{k}\to I^{k}$ and for every $\cc{P}\in C(\fort{K}^{k})$ a family of combinatorially equivalent morphisms $\phi'^{\lambda}:\fort{K}'_{\cc{P}}\to I$ such that for every $\C'\in C(\fort{K}'_{\cc{P}})$ the pullbacks $(\phi'^{\lambda}|_{\C'})^{*}\gamma^{\C,j,\lambda}$ and $(\phi'^{\lambda}|_{\C'})^{*}(f^{(\alpha)}_{\C,j,\lambda}\circ\gamma^{\C,j,\lambda})$ are $r$-functions. Define $\fort{K}_{\cc{P}}:=(\phi'^{\lambda})^{*}\F^{\ell}$ and let $\phi^{\lambda}:\fort{K}_{\cc{P}}\to\F^{\ell}$ extend $\phi'^{\lambda}$.\\ 

Let $\C''\in C(\fort{K}_{\cc{P}})$ such that $\phi^{\lambda}(\C'')\subset\C$.  By the induction on 
$\alpha$ and the chain rule we have that  $\left((\phi^{\lambda}|_{\C''})^{*}f_{\C,j,\lambda}\right)^{(\beta)}=O_{\ell,r}(1)$ when $\beta<\alpha$. When computing $\left((\phi^{\lambda}|_{\C''})^{*}f_{\C,j,\lambda}\right)^{(\alpha)}$ by the chain rule, again by the induction on $\alpha$ all the terms except for $(\phi^{\lambda}|_{\C''})^{\alpha_{1}}\cdot(\phi^{\lambda}|_{\C''})^{*}f_{\C,j,\lambda}^{(\alpha)}$ are bounded by $O_{\ell,r}(1)$. Now \begin{equation}
\begin{split}
  (\phi^{\lambda}|_{\C''})^{\alpha_{1}}\cdot(\phi^{\lambda}|_{\C''})^{*}f_{\C,j,\lambda}^{(\alpha)}&\leq  \left(\frac{\partial\phi^{\lambda}|_{\C''}}{\partial x_{1}}\right)^{\alpha_{1}}\cdot2(\phi_{1}^{\lambda}|_{\C''})^{*}(f_{\C,j,\lambda}^{(\alpha)}\circ\gamma^{\C,j,\lambda})\leq \\
  &\leq \frac{\partial\phi^{\lambda}|_{\C''}}{\partial x_{1}}\cdot2(\phi_{1}^{\lambda}|_{\C''})^{*}(f_{\C,j,\lambda}^{(\alpha)}\circ\gamma^{\C,j,\lambda}).
\end{split}
\end{equation}
We proceed to bound the right hand side. Consider $(\phi_{1}^{\lambda}|_{\C''})^{*}(f^{(\alpha-1_{1})}_{\C,j,\lambda}\circ\gamma^{\C,j,\lambda})'$, which is bounded by $O_{\ell,r}(1)$ by the induction hypothesis. By the chain rule it is equal to \begin{equation}
    \frac{\partial\phi^{\lambda}|_{\C''}}{\partial x_{1}}\cdot(\phi^{\lambda}_{1}|_{\C''})^{*}\left(f^{(\alpha)}_{\C,j,\lambda}\circ\gamma^{\C,j,\lambda}+\sum^{\ell}_{i=2}(\gamma^{\C,j,\lambda}_{j})'\cdot f^{(\alpha-1_{1}+1_{j})}_{\C,j,\lambda}\circ\gamma^{\C,j,\lambda}\right).
\end{equation}
Due to the induction on $\alpha$ and the definition of $\phi^{\lambda}$, all of the terms above except for $\frac{\partial\phi^{\lambda}|_{\C''}}{\partial x_{1}}\cdot(\phi_{1}^{\lambda}|_{\C''})^{*}(f_{\C,j,\lambda}^{(\alpha)}\circ\gamma^{\C,j,\lambda})$ are bounded by $O_{\ell,r}(1)$. Thus, $\frac{\partial\phi^{\lambda}|_{\C''}}{\partial x_{1}}\cdot2(\phi_{1}^{\lambda}|_{\C''})^{*}(f_{\C,j,\lambda}^{(\alpha)}\circ\gamma^{\C,j,\lambda})$ is also bounded by $O_{\ell,r}(1)$, and we are done by linear subdivision of order $d=O_{\ell,r}(1)$. \qedsymbol
\bibliographystyle{plain}
\bibliography{Ref}
\end{document}